\documentclass[12pt, a4paper]{article}
\usepackage[latin1]{inputenc}
\usepackage[english]{babel}
\usepackage{amsmath,verbatim,amsthm,mathrsfs,stmaryrd}
\usepackage{hyperref}
\usepackage{indentfirst}
\usepackage{amstext}
\usepackage{enumerate}
\usepackage{amsfonts} 
\usepackage{textcomp}
\usepackage{amssymb}
\usepackage{setspace}
\usepackage{color}
\usepackage[normalem]{ulem}
\usepackage[dvipsnames]{xcolor}
\usepackage{graphicx}
\graphicspath{ {./images/} }
\usepackage[all]{xy}
\usepackage[noabbrev, capitalise, nameinlink, nosort]{cleveref}
\usepackage[mathscr]{euscript}

\newcommand {\R}{\mathbb{R}}
\newcommand {\N}{\mathbb{N}}

\newcommand{\bgln}{\begin{eqnarray}} 
\newcommand{\egln}{\end{eqnarray}}
\newcommand{\bgl}{\begin{equation}} 
\newcommand{\egl}{\end{equation}}

\newcommand{\m}[2]{\mathcal {#1}_ {#2}}

\usepackage{geometry}
\newcommand{\diam}{\operatorname{{diam}}}

\newcommand{\dist}{\operatorname{{dist}}}

\newcommand{\Dom}{\operatorname{Dom}}

\newtheorem{teorema}{theorem}[section]
\newtheorem{lemma}[teorema]{Lemma}
\newtheorem{corollary}[teorema]{Corollary}
\newtheorem{definition}[teorema]{Definition}
\newtheorem{proposition}[teorema]{Proposition}

\newtheorem{example}[teorema]{Example}
\theoremstyle{remark}
\theoremstyle{definition}
\newtheorem{remark}[teorema]{Remark}
\newtheorem{theorem}[teorema]{Theorem}

\title{Paterson compactifications, inverse limits and shadowing for Deaconu--Renault systems}

\author{
Daniel Gon\c{c}alves$^{1}$\thanks{The first author was partially supported by Conselho Nacional de Desenvolvimento Cient\'ifico e Tecnol\'ogico (CNPq) - Brazil, and Funda\c{c}\~ao de Amparo \`a Pesquisa e Inova\c{c}\~ao do Estado de Santa Catarina (FAPESC).}
\and
Danilo Royer$^{1}$
\and
Felipe Augusto Tasca$^{1,2}$\thanks{The third author was partially supported by Funda\c{c}\~ao de Amparo \`a Pesquisa e Inova\c{c}\~ao do Estado de Santa Catarina (FAPESC).}
}

\date{}

\begin{document}

\maketitle

\begin{abstract}
We develop a new metric and inverse-limit framework for Deaconu--Renault systems arising from local homeomorphisms between open subsets of locally compact zero-di\-men\-si\-o\-nal spaces. Our starting point is the Paterson-type compactification of infinite product spaces, which underlies several symbolic and groupoid models, including one-sided shifts over infinite alphabets and path spaces of graphs and higher-rank graphs. We construct an explicit compatible ultrametric on this compactification and give a concrete description of its generalized cylinder topology and convergence.

Within this framework, we introduce an inverse-limit-type space naturally associated to a Deaconu--Renault system. In contrast with the classical compact theory, the correct inverse-limit object must incorporate not only infinite backward orbits but also finite configurations arising as limits of such orbits. This produces a canonical shift system extending the original dynamics.

We then study shadowing in the ultrametric setting. For spaces admitting tame defining sequences, we characterize shadowing in terms of the defining partitions, extending the topological description of shadowing from compact zero-dimensional dynamics to the locally compact setting relevant for Deaconu--Renault systems. As a main application, we prove a transfer theorem showing, under a separation property expressed through uniformly contracting inverse branches, that shadowing for the inverse-limit Deaconu--Renault system is equivalent to shadowing for the compactified base system. This provides a noncompact inverse-limit shadowing theory for a broad class of partially defined local-homeomorphism dynamics.
\end{abstract}

\vspace{4mm}

\noindent
\textbf{MSC2020}: Primary 37B65, 37B10; Secondary 54D35, 54E35.

\noindent
\textbf{Keywords}: shadowing, Deaconu--Renault systems, inverse limits, ultrametric spaces, Paterson compactification, zero-dimensional dynamics.

\vspace{3mm}

\noindent
$^{1}$Departamento de Matem\'atica, Federal University of Santa Catarina,
88040-970, Florian\'opolis, SC, Brazil.

\noindent
$^{2}$Federal Institute of Paran\'a,
84600-000, Uni\~ao da Vit\'oria, PR, Brazil.

\noindent
\textit{E-mail addresses:}
\href{mailto:daemig@gmail.com}{daemig@gmail.com},
\href{mailto:danilo.royer@ufsc.br}{danilo.royer@ufsc.br},
\href{mailto:tasca.felipe@gmail.com}{tasca.felipe@gmail.com}.

\vspace{4mm}

\section{Introduction}

Deaconu--Renault systems form a natural topological framework for dynamics generated by a
local homeomorphism between open subsets of a locally compact Hausdorff space. They occur
throughout symbolic dynamics, topological graph dynamics, and the groupoid approach to
operator algebras. In particular, one-sided shift spaces over infinite alphabets, boundary-path
spaces of directed graphs, and path spaces of higher-rank graphs all exhibit the same basic
feature: the natural state space is not a compact product space in the classical sense, but a
locally compact zero-dimensional space together with a partially defined shift-type map
\cite{deaconu1995,OTW14,webster2014,webster2011}. This setting is rich enough to capture
important examples, but it falls outside the standard compact theory, in which inverse limits
and shadowing are usually formulated.

The purpose of this paper is to develop a metric and inverse-limit framework for this
noncompact zero-dimensional setting, and to use it to obtain new results on shadowing for
Deaconu--Renault systems. The point is not merely to transport known compact arguments to a
slightly more general context. Rather, the objects that arise here force new constructions and
new proofs. In the classical compact theory, one starts with a globally defined map on a metric
space and forms its inverse limit inside a product space whose topology and metric are already
available. In the present setting, none of these ingredients is automatic: the ambient compact
space must first be built, its topology must be made explicit, the correct inverse-limit object
must accommodate finite as well as infinite backward trajectories, and the shadowing problem
must be reformulated so that it interacts correctly with partial domains and local inverse
branches. These difficulties are structural, and resolving them leads to a new theory.

Our starting point is the compactification introduced by Paterson and Welch for countable
products of locally compact spaces, a construction that also underlies the OTW model for
one-sided shifts over infinite alphabets and related path-space constructions in graph dynamics
\cite{GoncalvesRoyerTasca2024, OTW14,webster2011,webster2014}. It has long been known that these spaces are
metrizable in the countable case, but for the purposes of dynamics one needs considerably
more than abstract metrizability. One needs an explicit metric, an effective description of
convergence, and a basis adapted to the symbolic and inverse-limit structure. The first part of
the paper supplies exactly this. We construct a concrete compatible ultrametric on the
Paterson-type compactification and describe its generalized cylinder topology in a form suited
to dynamical arguments. This gives a usable metric model for a class of spaces that has become
increasingly important in symbolic and groupoid dynamics.

The next step is to identify the correct inverse-limit object in this context. If $f:\Dom(f)\to \Im(f)$ is a local homeomorphism between open subsets of a locally compact Hausdorff space, then the
classical inverse limit is no longer the right object: finite backward paths and limits of infinite
backward paths cannot simply be discarded. We introduce an inverse-limit-type space
$\widetilde X$ that incorporates both infinite backward orbits and the finite configurations that
arise as their limits in the Paterson compactification. This construction comes equipped with
natural shift maps and yields a Deaconu--Renault system canonically associated to the original
dynamics. In this way, the paper provides a genuine inverse-limit formalism for partially
defined zero-dimensional systems. To our knowledge, this formulation does not appear in the
existing literature, even for basic examples motivated by symbolic dynamics over infinite
alphabets.

The second theme of the paper is shadowing. In compact metric dynamics, shadowing admits
powerful descriptions in terms of symbolic models, inverse limits, and clopen partitions. In
particular, Good and Meddaugh showed that shifts of finite type play a fundamental role in the
theory of shadowing on compact totally disconnected spaces, while Darji, Gon\c calves, and
Sobottka developed a characterization of shadowing in ultrametric spaces in terms of defining
sequences \cite{GoodMeddaugh2020, DGS}. One of our main results
extends this circle of ideas to the setting relevant for Deaconu--Renault systems. In
Section~\ref{nintendoDS} we characterize shadowing in terms of tame defining sequences for
the ultrametric spaces that arise from our compactification procedure. This yields a
topological-ultrametric criterion for shadowing beyond the compact framework of the existing
theory. 

The inverse-limit construction and the explicit ultrametric are then used to obtain the main
dynamical application of the paper: a transfer theorem for shadowing between the compactified
base system and the inverse-limit Deaconu--Renault system. Under a separation property
formulated in terms of a compatible ultrametric on the one-point compactification and
uniformly contracting local inverse branches, we prove that shadowing for the inverse-limit
system $(E,\alpha_f)$ is equivalent to shadowing for the compactified base system
$(D^\infty,f)$. This generalizes the classical transfer principle for inverse limits of
homeomorphisms and continuous surjections on compact spaces \cite{chenli1993}. In the Deaconu--Renault setting one must
control partial domains, length-one words, the distinguished zero word, and the interaction
between finite and infinite backward trajectories. The separation property introduced here is
designed precisely to make this possible. It provides a workable mechanism for converting
pseudo-orbits in the base system into exact inverse-limit trajectories and conversely, in a
setting where standard compact partition arguments do not apply directly.

From a broader perspective, the paper develops a new toolkit for noncompact zero-di\-men\-si\-o\-nal
dynamics. Its contribution is not only that it proves several results in parallel; it is that it
creates a framework in which these results become accessible at all. The explicit metric on the
Paterson compactification, the inverse-limit construction for Deaconu--Renault systems, the
characterization of shadowing via tame defining sequences, and the transfer theorem between
base and inverse-limit dynamics interact in an
essential way. The metric makes the topology effective; the inverse-limit construction supplies
the correct phase space upstairs; the defining-sequence criterion makes shadowing testable in
the ultrametric regime; and the transfer theorem links the two levels of dynamics. Together, they open a route to studying symbolic and groupoid models of noncompact local-homeomorphism
dynamics with tools that, until now, were available only in more restrictive compact settings.

The paper is organized as follows. In Section~\ref{W0 section} we revisit the Paterson-type
construction, describe its topology by generalized cylinders, and construct a compatible
ultrametric. In Section~\ref{tornado} we introduce the inverse-limit space $\widetilde X$ and
the associated shift maps, and we show that these yield the Deaconu--Renault system attached
to the inverse-limit construction. In Section~\ref{nintendoDS} we prove the characterization of
shadowing in terms of tame defining sequences in ultrametric spaces. Finally, in
Section~\ref{shadowthis} we establish the transfer theorem between the compactified base
system and the inverse-limit system, and we illustrate the theory with examples arising from
symbolic dynamics.

\section{Metric on infinite product spaces}\label{W0 section}

Throughout the paper, we assume that $X$ is an infinite locally compact Hausdorff space
that admits a countable basis of compact open sets closed under finite intersections,
denoted by $\mathcal{B}$.

In \cite{pat}, the authors define a topology on infinite product spaces of locally compact
spaces so that the resulting space is locally compact. They refer to this topology as the
topology of pointwise convergence. Under their assumptions, the resulting space is
metrizable. In this section, we recall the construction from \cite{pat}, describe a basis for
the topology (consisting of compact open sets), provide an alternative characterization of
convergence of sequences, and exhibit a compatible metric, which turns out to be an
ultrametric.

We begin by recalling the definition of compactified infinite product space as in
\cite{pat}. For each $n \geq 0$, let $
X^n = \underbrace{X \times \cdots \times X}_{n \text{ times}}$, with $X^0 = \{\vec{0}\}$. Define
\[
W_0 = X^{\mathbb{N}} \,\cup\, \Big( \bigcup_{n=0}^\infty X^n \Big).
\]

Let $X_\infty := X \cup \{\infty\}$ be the one-point compactification of $X$. Then $X_\infty$
is a compact Hausdorff space. By Tychonoff's theorem, the countably infinite product $
A := X_\infty \times X_\infty \times X_\infty \times \cdots $
is also a compact Hausdorff space when endowed with the product topology.

Define a function $Q: A \to W_0$ by
\begin{equation}
Q(x_1 x_2 x_3 \ldots)
=
\begin{cases}
x_1 x_2 x_3 \ldots & \text{if } x_i \neq \infty \text{ for all } i,\\[4pt]
x_1 x_2 \ldots x_n & \text{if } x_{n+1} = \infty \text{ for some } n \text{ and } x_i \neq \infty \text{ for } 1 \le i \le n,\\[4pt]
\vec{0} & \text{if } x_1 = \infty.
\end{cases}
\label{specify}
\end{equation}

Observe that $Q$ is surjective. We equip $W_0$ with the quotient topology induced from $A$
via the map $Q$, that is, a set $U \subseteq W_0$ is open if and only if $Q^{-1}(U) \subseteq
A$ is open. Since $Q$ is continuous and $A$ is compact, it follows that $W_0$ is compact.
Define the length map $\ell : W_0 \to \mathbb{N} \cup \{\infty\}$ by $\ell(x) = n$ if $x \in X^n$ for some $n \ge 0$ and $\ell(x)=\infty$ if $x\in X^{\mathbb{N}}$.
Note that $\vec{0}$ is the unique element of $W_0$ with length $0$.

A basis for the topology defined above is not provided in \cite{pat}. We remedy this below, by constructing a basis formed by certain generalized cylinder sets, which we define below.

\begin{definition}\label{gcyl}
Let $B_1, \ldots, B_k$ be
elements of $\mathcal{B}$, $k\in \N$, and let $K \subseteq X$ be a compact open set. The subsets of $W_0$ defined by
\begin{align*}
Z[B_1 \times \cdots \times B_k, K]
  &= \left\{\, x \in W_0 :
      \ell(x) \ge k,\ 
      (x_1, \ldots, x_k) \in B_1 \times \cdots \times B_k,\ 
      x_{k+1} \notin K
     \right\}, \\
C[K]
  &= \left\{\, x \in W_0 :
      x = \vec{0}
      \ \text{or}\
      x_1 \notin K
     \right\},
\end{align*}
are called generalized cylinders.
\end{definition}

\begin{remark}
Note that $C[K]$ and $Z[K,\emptyset]$ are distinct sets. Moreover, whenever $K \in \mathcal{B}$,
we have $
C[K] = W_0 \setminus Z[K,\emptyset].$
\end{remark}

\begin{proposition}
The collection of generalized cylinders  $\{Z[B_1 \times \cdots \times B_k, K], C[K]\}$, where $k\in \N$, $B_i\in \mathcal B$, and $K$ is a compact open subset of $X$ forms a basis for a topology on $W_0$.
\end{proposition}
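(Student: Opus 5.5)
We verify the two basis axioms: the generalized cylinders cover $W_0$, and the intersection of any two of them is a union of generalized cylinders. The guiding observation is that in each generalized cylinder the "forbidden set" $K$ only excludes a compact open subset at the next coordinate, and two such exclusions at the same coordinate combine to one exclusion by $K\cup K'$, still compact open. Throughout we use that $\mathcal{B}$ is closed under finite intersections, and we read the condition "$x_{k+1}\notin K$" as vacuously true when $\ell(x)=k$.

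\emph{Covering.} $C[\emptyset]=W_0$, so the family covers $W_0$.

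\emph{Intersections.} There are three cases.
\begin{enumerate}[(i)]
\item $C[K]\cap C[K']=C[K\cup K']$, since $x=\vec 0$ or $x_1\notin K\cup K'$ is exactly the conjunction of the two conditions.
\item For $Z=Z[B_1\times\cdots\times B_k,K]$ and $C[K']$ with $k\ge1$, every $x\in Z$ has $x_1\in B_1$. Writing $B_1\setminus K'$ as a union of elements of $\mathcal{B}$ then gives
\[
Z\cap C[K']=\bigcup\{Z[B\times B_2\times\cdots\times B_k,K] : B\in\mathcal{B},\ B\subseteq B_1\setminus K'\}.
\]
Here we use that $B_1\setminus K'$ is open, since $K'$ is compact and hence closed in the Hausdorff space $X$, and that $\mathcal{B}$ is a basis.
\item For $Z=Z[B_1\times\cdots\times B_k,K]$ and $Z'=Z[B'_1\times\cdots\times B'_m,K']$, assume $k\le m$.
\begin{itemize}
\item If $k=m$, then $Z\cap Z'=Z[(B_1\cap B_1')\times\cdots\times(B_k\cap B_k'),K\cup K']$, a single cylinder.
\item If $k<m$, then every $x\in Z\cap Z'$ has length $\ge m>k$, so $x_{k+1}$ is defined and lies in $B'_{k+1}$. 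The condition $x_{k+1}\notin K$ therefore becomes $x_{k+1}\in B'_{k+1}\setminus K$. Covering $B'_{k+1}\setminus K$ by basic sets $B\in\mathcal{B}$ yields
\[
Z\cap Z'=\bigcup_{B\subseteq B'_{k+1}\setminus K} Z[(B_1\cap B'_1)\times\cdots\times(B_k\cap B'_k)\times B\times B'_{k+2}\times\cdots\times B'_m,\,K'].
\]
\end{itemize}
\end{enumerate}

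The main point needing care is the length bookkeeping in case (iii), together with the convention on $x_{k+1}$ when $\ell(x)=k$. When $k<m$, membership in $Z'$ forces $\ell(x)\ge m\ge k+1$, so the condition $x_{k+1}\notin K$ is a genuine restriction on a defined coordinate. In case (ii) the zero word does not belong to $Z$, since $\ell(\vec 0)=0<k$, so no issue arises there. Each case produces unions of generalized cylinders, which is what the basis criterion requires, so the family is a basis for a topology on $W_0$. That this topology agrees with the quotient topology is a separate claim not asserted in this proposition.
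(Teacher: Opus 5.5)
Your proof is correct and is essentially the paper's argument. It uses the same three-case split: $C[K]\cap C[K']=C[K\cup K']$; equal-length cylinders intersect in a single cylinder with forbidden set $K\cup K'$; and the mixed and unequal-length cases are handled by shrinking the relevant coordinate to a basic set inside $B_1\setminus K'$ or $B'_{k+1}\setminus K$. The only difference is cosmetic: you write each intersection as a union of cylinders, whereas the paper picks, for each point of the intersection, one such cylinder containing it. Your formula $C[K\cup K']$ also corrects the paper's misprint $Z[K\cap L]$ in the last case.
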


\begin{proof}
   It is clear that $W_0 = C[\emptyset]$.
    
We aim to show that, given two cylinder sets $Z_1$ and $Z_2$ in $W_0$, then each element of $Z_1\cap Z_2$ belongs to some cylinder set $D$ contained in $Z_1 \cap Z_2$. To this end, we consider three distinct cases.

    \begin{itemize}
        \item For $Z_1=Z[B_1 \times \ldots \times B_k, K]$ and $Z_2=Z[C_1 \times \ldots \times C_n, L]$ we have two subcases:
        \begin{itemize}
            \item Case $k=n$. In this case, define for each $i \in \{1,\ldots, k\}$, the set $D_i:=B_i\cap C_i\in \mathcal{B}$. Furthermore, let $M=K\cup L$, which is a compact open set. Now let $D=Z[D_1\times \ldots \times D_k,M]$ and notice that $D= Z_1 \cap Z_2$. Therefore, each element of $Z_1\cap Z_2$ belongs to $D$.
        
            \item Case $k<n$. Let $x\in Z_1\cap Z_2$. For each $i\in \{1, \ldots , k\}$, define $D_i=B_i\cap C_i$. Now, we see that $C_{k+1}\cap (X\setminus K)$ is open in $X$, and then there exists $D_{k+1}\in \mathcal{B}$ such that $x_{k+1}\in D_{k+1}\subseteq C_{k+1}\cap (X\setminus K)$. Now define  
            $D=Z[D_1\times\ldots\times D_k \times D_{k+1}, C_{k+2}\times \ldots\times C_n, L]$, and notice that $x\in D\subseteq Z_1\cap Z_2$.
            \end{itemize}

        \item For $Z_1=Z[B_1 \times \ldots \times B_k, K]$ and $Z_2=C[L]$, observe that $B_1 \cap (X \setminus L)$ is open and, therefore, there exists $D_1 \in \mathcal{B}$ such that $x_1 \in D_1 \subseteq B_1 \cap (X \setminus L)$. Therefore, $x \in D := Z[D_1 \times B_2 \times \ldots \times B_k, K] \subseteq Z_1 \cap Z_2$.

        \item For $Z_1=C[K]$ and $Z_2=C[L]$, with $K,L$ compact open sets, notice that $Z_1\cap Z_2=Z[K\cap L]$. In this case, let $D=Z_1\cap Z_2$.
    \end{itemize}
\end{proof}

From now on, we also consider the generalized cylinder set topology on $W_0$, that is, the
topology generated by the generalized cylinders $Z[P,K]$ with $P \in \mathcal{B}^k$ for
some $k \ge 0$ and $K \subseteq X$ a compact open set. Our goal is to show that this
generalized cylinder set topology coincides with the quotient topology. The first step in
this direction is the following:

\begin{lemma}\label{barcoleve}
Let $B_1, B_2, \ldots, B_k$ be elements of $\mathcal{B}$ and let $K \subseteq X$ be a compact
open set. Then the generalized cylinder sets $Z[B_1 \times \cdots \times B_k, K]$ and $C[K]$
are compact open subsets of $W_0$ in the quotient topology.
\end{lemma}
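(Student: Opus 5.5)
The approach is to compute the preimages under $Q$ explicitly. Openness in the quotient topology means the preimage is open in $A$. For compactness, I would show the preimage is closed in $A$. Then it is compact, since $A$ is compact, and its image under the continuous map $Q$ is compact. Because $Q$ is surjective, $Q(Q^{-1}(S)) = S$ for any $S \subseteq W_0$. So it suffices to prove that $Q^{-1}(Z[B_1\times\cdots\times B_k,K])$ and $Q^{-1}(C[K])$ are clopen in $A$.

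For $C[K]$, a point $a=(a_1a_2\ldots)\in A$ satisfies $Q(a)\in C[K]$ exactly when $a_1=\infty$ or $a_1\in X\setminus K$, that is, when $a_1\in X_\infty\setminus K$. Hence
\[
Q^{-1}(C[K]) = (X_\infty\setminus K)\times X_\infty\times X_\infty\times\cdots .
\]
In $X_\infty$, the set $K$ is open (it is open in $X$) and compact, hence closed, since $X_\infty$ is Hausdorff. So $X_\infty\setminus K$ is clopen, and this cylinder set is clopen in the product topology.

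For $Z=Z[B_1\times\cdots\times B_k,K]$, I would check directly from the definition of $Q$ the conditions for $Q(a)\in Z$. The condition $\ell(Q(a))\ge k$ together with $(Q(a)_1,\dots,Q(a)_k)\in B_1\times\cdots\times B_k$ holds exactly when $a_i\in B_i$ for $1\le i\le k$, because then no $a_i$ equals $\infty$ for $i\le k$. The condition $x_{k+1}\notin K$ must be read as including the case $\ell(x)=k$, where $x_{k+1}$ does not exist; this is the natural reading, parallel to the $\vec 0$ clause in $C[K]$. Under $Q$, this case corresponds exactly to $a_{k+1}=\infty$. So the condition becomes $a_{k+1}\in X_\infty\setminus K$, and
\[
Q^{-1}(Z)=B_1\times\cdots\times B_k\times (X_\infty\setminus K)\times X_\infty\times\cdots .
\]
Each $B_i$ is compact open in $X$, hence clopen in $X_\infty$. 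So this is again a clopen cylinder in $A$.

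The only delicate step is this bookkeeping of the $\infty$ coordinates: I must verify that $Q(a)$ has length at least $k$ with prescribed first coordinates if and only if $a_1,\dots,a_k$ are finite and lie in the $B_i$. This follows because $Q$ truncates at the first $\infty$. I also need the convention for $x_{k+1}$ when $\ell(x)=k$. With that settled, the rest is the Hausdorff/compact-open argument in $X_\infty$ and Tychonoff.
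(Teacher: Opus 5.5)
Your proof is correct, and your openness computation matches the paper's: the same two preimages under $Q$, with the same convention that words of length exactly $k$ satisfy the condition on $x_{k+1}$. The routes diverge for compactness.

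\textbf{The paper's route.} It proves that each generalized cylinder is closed in $W_0$ by a case analysis: $\ell(y)<k$; some $y_j\notin B_j$; or $y_{k+1}\in K$; with a separate argument for $C[K]$. In each case it exhibits a generalized cylinder around the point, disjoint from the given cylinder. Compactness then follows because closed subsets of the compact space $W_0$ are compact.

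\textbf{Your route.} You note that the preimages are also closed in $A$, hence compact, and push compactness down using surjectivity, $Q(Q^{-1}(S))=S$. This is shorter and uniform, with no case analysis. It also gives closedness in $W_0$ for free, since $Q^{-1}(W_0\setminus S)=A\setminus Q^{-1}(S)$ is open.

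\textbf{What the paper's case analysis adds.} The separating neighborhoods it builds are themselves generalized cylinders. So the complement of each cylinder is open in the generalized cylinder topology, not merely in the quotient topology. This is what Lemma~\ref{preimagcyl} tacitly needs to conclude that $Z[q,\emptyset]^c$ is open in the cylinder topology, before Proposition~\ref{same topologies} identifies the two topologies. If you used your version of the lemma, you would have to supply that fact separately at that later point. For the lemma as stated, which concerns only the quotient topology, your argument is complete.
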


\begin{proof}
We now show that $Z[B_1 \times \cdots \times B_k, K]$ and $C[K]$ are open in $W_0$ with the
quotient topology. Consider their preimages under $Q$:
\[
Q^{-1}\big(Z[B_1 \times \cdots \times B_k, K]\big)
= B_1 \times \cdots \times B_k \times (X_\infty \setminus K)
\times X_\infty \times X_\infty \times \cdots,
\]
which is open in $A$ since each $B_i$ is open in $X_\infty$ and $X_\infty \setminus K$ is open
in $X_\infty$ (because $K$ is compact $X$).

Similarly,
\[
Q^{-1}(C[K])
= (X_\infty \setminus K) \times X_\infty \times X_\infty \times \cdots,
\]
which is open in $A$ as well. Therefore, both $Z[B_1 \times \cdots \times B_k, K]$ and $C[K]$
are open in $W_0$.

\smallskip
Next, we show that $Z[B_1 \times \cdots \times B_k, K]$ is closed in $W_0$ (with the quotient
topology). Let $y \notin Z[B_1 \times \cdots \times B_k, K]$. We consider two cases: 
$\ell(y) < k$ and $\ell(y) \ge k$.

\smallskip

\noindent
\textbf{Case 1:} $\ell(y) < k$. 
If $\ell(y)=0$, then $y=\vec{0}$ and $y \in C[B_1]$, which is open in $W_0$, and clearly
$C[B_1] \cap Z[B_1 \times \cdots \times B_k, K] = \emptyset$.

If $\ell(y) > 0$, write $y = y_1 \cdots y_n$ with $n < k$. For each $i = 1, \ldots, n$, choose
$C_i \in \mathcal{B}$ such that $y_i \in C_i$. Then $
y \in Z[C_1 \times \cdots \times C_n, B_{n+1}],$ and $
Z[C_1 \times \cdots \times C_n, B_{n+1}]
\cap
Z[B_1 \times \cdots \times B_k, K]
=
\emptyset.$

\smallskip

\noindent
\textbf{Case 2:} $\ell(y) \ge k$.
Write $y = y_1 \cdots y_n$ with $n \ge k$. Since $y \notin Z[B_1 \times \cdots \times B_k, K]$,
either:

(i) $y_i \notin B_i$ for some $1 \le i \le k$, or

(ii) $y_i \in B_i$ for all $1 \le i \le k$, and $y_{k+1} \in K$.

If (i) holds, let $j = \min\{ i : y_i \notin B_i \}$. Choose $C_j \in \mathcal{B}$ such that
$y_j \in C_j \subseteq X \setminus B_j$. Then $y \in Z[B_1 \times \cdots \times B_{j-1} \times C_j, \emptyset]$,
which is an open subset of $W_0$ disjoint from $Z[B_1 \times \cdots \times B_k, K]$.

If (ii) holds, then $y_{k+1} \in K$. Since $K$ is open, choose $B_{k+1} \in \mathcal{B}$ such
that $y_{k+1} \in B_{k+1} \subseteq K$. Then $
Z[B_1 \times \cdots \times B_{k+1}, \emptyset]
\cap
Z[B_1 \times \cdots \times B_k, K]
=
\emptyset.$

\smallskip

In all cases, we have exhibited an open neighborhood of $y$ disjoint from
$Z[B_1 \times \cdots \times B_k, K]$, proving that the latter is closed.

We now show that $C[K]$ is closed. Let $y \notin C[K]$. Since $\vec{0} \in C[K]$,
we must have $\ell(y) \ge 1$, so write $y = y_1 y_2 \cdots$. Because $y \notin C[K]$, we have
$y_1 \in K$. Since $K$ is open in $X$, there exists $C_1 \in \mathcal{B}$ such that
$y_1 \in C_1 \subseteq K$. Then $
y \in Z[C_1, \emptyset]
\quad \text{and} \quad
Z[C_1, \emptyset] \cap C[K] = \emptyset.$
Thus $C[K]$ is closed in $W_0$.

\smallskip

We have now shown that all generalized cylinders are both open and closed in $W_0$. Since
$W_0$ is compact in the quotient topology, every such cylinder is also compact.
\end{proof}

To prove that the generalized cylinder topology coincides with the quotient topology on
$W_0$, and to define a compatible metric later, we introduce the following auxiliary map.

Let $\mathcal{P} = \bigcup_{n=1}^\infty \mathcal{B}^n$, and define
$\alpha : W_0 \to \{0,1\}^{\mathcal{P}}$ by
\begin{equation}
\alpha(x)(B_1 \times \cdots \times B_k)
=
\begin{cases}
1 & \text{if } x \in Z[B_1 \times \cdots \times B_k, \emptyset], \\[4pt]
0 & \text{otherwise}.
\end{cases}
\label{eq:alpha}
\end{equation}

The map $\alpha : W_0 \to \{0,1\}^{\mathcal{P}}$ is not surjective. For example, there is no
$x \in W_0$ such that
$\alpha(x)(B_1 \times \cdots \times B_k) = 1$
for every
$B_1 \times \cdots \times B_k \in \mathcal{P}$.
However, it is injective and continuous, as we show next.

\begin{proposition}\label{propalpha}
The function
$\alpha : W_0 \to \alpha(W_0) \subseteq \{0,1\}^{\mathcal{P}}$
is a homeomorphism (where $W_0$ is equipped with the quotient topology and
$\{0,1\}^{\mathcal{P}}$ with the product topology).
\end{proposition}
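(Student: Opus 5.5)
The argument rests on the standard fact that a continuous injection from a compact space into a Hausdorff space is a homeomorphism onto its image. $W_0$ is compact in the quotient topology (noted after \eqref{specify}), and $\{0,1\}^{\mathcal{P}}$ is Hausdorff. So it suffices to prove that $\alpha$ is continuous and injective.

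\emph{Continuity.} Since $\{0,1\}^{\mathcal{P}}$ carries the product topology, $\alpha$ is continuous exactly when each coordinate map $x\mapsto \alpha(x)(B_1\times\cdots\times B_k)$ is continuous into the discrete space $\{0,1\}$. The preimage of $1$ under this coordinate is $Z[B_1\times\cdots\times B_k,\emptyset]$, and the preimage of $0$ is its complement. Lemma~\ref{barcoleve}, applied with $K=\emptyset$, says this cylinder is clopen in the quotient topology. Hence every coordinate map is continuous, and so is $\alpha$.

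\emph{Injectivity.} Let $x\neq y$ in $W_0$; we produce some $P\in\mathcal{P}$ with $\alpha(x)(P)\neq\alpha(y)(P)$. There are two cases.
\begin{enumerate}
\item[(a)] Suppose $\ell(x)\neq\ell(y)$, say $\ell(x)<\ell(y)$, and put $n=\ell(x)$. Then $y$ has a coordinate $y_{n+1}$. Choose $C_i\in\mathcal{B}$ with $y_i\in C_i$ for $i=1,\dots,n+1$; such sets exist because $\mathcal{B}$ is a basis. Then $y\in Z[C_1\times\cdots\times C_{n+1},\emptyset]$, whereas $x$ is not, since $\ell(x)<n+1$. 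This also covers $x=\vec 0$.
\item[(b)] Suppose $\ell(x)=\ell(y)$. Then some index $j$ has $x_j\neq y_j$. Because $X$ is Hausdorff and $\mathcal{B}$ is a basis, there is $C_j\in\mathcal{B}$ with $y_j\in C_j$ and $x_j\notin C_j$. For $i<j$ choose any $C_i\in\mathcal{B}$ containing $y_i$. Then the cylinder $Z[C_1\times\cdots\times C_j,\emptyset]$ contains $y$ but not $x$.
\end{enumerate}

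In either case the chosen product $P$ separates $x$ from $y$, so $\alpha$ is injective. The compactness argument then shows that $\alpha^{-1}\colon \alpha(W_0)\to W_0$ is continuous: closed subsets of $W_0$ are compact, their images are compact, and compact subsets of a Hausdorff space are closed.

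There is no real obstacle in this argument. The only delicate point is that continuity uses exactly the clopenness from Lemma~\ref{barcoleve}, which was itself derived from the quotient description, so the continuity step relies entirely on that lemma.
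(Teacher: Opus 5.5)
Your proof is correct and follows the paper's argument: continuity from the clopenness of $Z[B_1\times\cdots\times B_k,\emptyset]$ (Lemma~\ref{barcoleve} with $K=\emptyset$), injectivity by the same two cases on lengths, and then the compact-to-Hausdorff argument. Your version is slightly cleaner because it needs only the target $\{0,1\}^{\mathcal{P}}$ to be Hausdorff. The paper instead calls $W_0$ a ``compact Hausdorff space'', although Hausdorffness of $W_0$ in the quotient topology has not been established independently at that point; it actually follows from $\alpha$ being a continuous injection into a Hausdorff space.
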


\begin{proof}
    We show that $\alpha$ is a homeomorphism onto its image. To prove that $\alpha$ is injective,
let $x,y \in W_0$ with $x \ne y$. We distinguish two cases.

\begin{itemize}
\item[\textbf{(i)}] $\ell(x) = \ell(y)$.  
Then there exists a minimal $k \ge 1$ such that $x_k \ne y_k$. Since $X$ is Hausdorff, we
can find open sets $U,V \subseteq X$ such that $x_k \in U$, $y_k \in V$, and
$U \cap V = \emptyset$. Because $\mathcal{B}$ is a basis of compact open sets, we may choose
$B_k \in \mathcal{B}$ such that $x_k \in B_k \subseteq U$, so in particular $y_k \notin B_k$.
For each $i = 1, \ldots, k-1$, choose $B_i \in \mathcal{B}$ such that $x_i \in B_i$.
Then
\[
x \in Z[B_1 \times \cdots \times B_k, \emptyset]
\quad \text{and} \quad
y \notin Z[B_1 \times \cdots \times B_k, \emptyset].
\]
Thus, $
\alpha(x)(B_1 \times \cdots \times B_k)
\ne
\alpha(y)(B_1 \times \cdots \times B_k)$,
and therefore $\alpha(x) \ne \alpha(y)$.

\item[\textbf{(ii)}] $\ell(x) \ne \ell(y)$.  
Without loss of generality, suppose $\ell(x) < \infty$ and $\ell(x) < \ell(y)$. Let $\ell(x) = k$.
Choose $B_1, \ldots, B_{k+1} \in \mathcal{B}$ such that $y \in
Z[B_1 \times \cdots \times B_{k+1}, \emptyset]$. Since $\ell(x) = k < k+1$, we have
$x \notin Z[B_1 \times \cdots \times B_{k+1}, \emptyset]$. Hence
\[
\alpha(x)(B_1 \times \cdots \times B_{k+1}) = 0 \ne 1
= \alpha(y)(B_1 \times \cdots \times B_{k+1}),
\]
and therefore $\alpha(x) \ne \alpha(y)$.
\end{itemize}

To prove that $\alpha$ is continuous, recall that $\{0,1\}^{\mathcal{P}}$ is equipped with
the product topology. Thus, it suffices to show that, for each
$B_1 \times \cdots \times B_k \in \mathcal{P}$, the composition
$\pi_{B_1 \times \cdots \times B_k} \circ \alpha : W_0 \to \{0,1\}$ is continuous, where
$\pi_{B_1 \times \cdots \times B_k}$ denotes the corresponding coordinate projection.
But
\[
(\pi_{B_1 \times \cdots \times B_k} \circ \alpha)^{-1}(1)
= Z[B_1 \times \cdots \times B_k, \emptyset]
\]
and
\[
(\pi_{B_1 \times \cdots \times B_k} \circ \alpha)^{-1}(0)
= W_0 \setminus Z[B_1 \times \cdots \times B_k, \emptyset],
\]
both of which are open subsets of $W_0$ by Proposition~\ref{barcoleve}. Thus each coordinate
map is continuous, and consequently $\alpha$ is continuous.

Since $\alpha$ is a continuous bijection from the compact Hausdorff space $W_0$ onto the
subspace $\alpha(W_0)$ of $\{0,1\}^{\mathcal{P}}$, it follows that $\alpha$ is a homeomorphism
onto its image.

\end{proof}

We now prove an auxiliary result concerning the map
$\alpha : W_0 \to \alpha(W_0)$.

\begin{lemma}\label{preimagcyl}
Let $F, G \subseteq \mathcal{P}$ be finite sets, and define the cylinder set
\[
C_{F,G} = \{\, w \in \alpha(W_0) : w(p) = 1 \ \text{for all } p \in F,\ 
w(q) = 0 \ \text{for all } q \in G \,\}.
\]
Then $\alpha^{-1}(C_{F,G})$ is open in the generalized cylinder set topology on $W_0$.
\end{lemma}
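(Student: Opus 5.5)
Since $\alpha^{-1}(C_{F,G}) = \bigcap_{p\in F}\alpha^{-1}(\{w: w(p)=1\}) \cap \bigcap_{q\in G}\alpha^{-1}(\{w: w(q)=0\})$, and the generalized cylinders form a basis of a topology (so finite intersections of open sets are open), it suffices to treat a single coordinate $p = B_1\times\cdots\times B_k\in\mathcal P$. By definition of $\alpha$, $\alpha^{-1}(\{w:w(p)=1\}) = Z[B_1\times\cdots\times B_k,\emptyset]$, which is itself a generalized cylinder and hence open. The plan therefore reduces to the complementary case: showing that $W_0\setminus Z[B_1\times\cdots\times B_k,\emptyset]$ is open in the generalized cylinder topology.

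For this, I would take $y\notin Z[B_1\times\cdots\times B_k,\emptyset]$ and produce a generalized cylinder containing $y$ and disjoint from $Z[B_1\times\cdots\times B_k,\emptyset]$. I will reuse the case analysis from the proof of Lemma~\ref{barcoleve} (closedness part), which builds exactly such neighbourhoods out of generalized cylinders. I will argue directly in the cylinder topology, not through the quotient topology.

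\textbf{Case 1: $\ell(y)=0$.} Then $y=\vec 0\in C[B_1]$, and $C[B_1]$ is disjoint from the $Z$-set since its members $x\ne\vec 0$ have $x_1\notin B_1$.

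\textbf{Case 2: $0<\ell(y)=n<k$.} Choose $C_i\in\mathcal B$ with $y_i\in C_i$. Then $y\in Z[C_1\times\cdots\times C_n,B_{n+1}]$, because $y_{n+1}$ does not exist. This cylinder is disjoint from the $Z$-set, since any $x$ in both would need $x_{n+1}\in B_{n+1}$ and $x_{n+1}\notin B_{n+1}$.

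\textbf{Case 3: $\ell(y)\ge k$.} Then some $y_j\notin B_j$; take the least such $j$. Choose $C_j\in\mathcal B$ with $y_j\in C_j\subseteq X\setminus B_j$, which is possible because $B_j$ is compact in Hausdorff $X$ and hence closed. Then $Z[B_1\times\cdots\times B_{j-1}\times C_j,\emptyset]$ contains $y$ and misses the $Z$-set.

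The only subtle point is the interpretation of the condition ``$x_{k+1}\notin K$'' in Definition~\ref{gcyl} when $\ell(x)=k$. It must be read as vacuously true, and Case 2 relies on this convention. Apart from that, the argument is routine, and the main step is simply verifying that each exhibited set is a genuine generalized cylinder.
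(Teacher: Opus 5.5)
Your proof is correct and follows essentially the paper's approach: the same decomposition $\alpha^{-1}(C_{F,G})=\bigcap_{p\in F}Z[p,\emptyset]\cap\bigcap_{q\in G}Z[q,\emptyset]^c$, with the complements handled by the neighbourhoods built in the closedness part of Lemma~\ref{barcoleve}. You are in fact more careful than the paper. It only cites Lemma~\ref{barcoleve}, which is stated for the quotient topology, and never says explicitly that those neighbourhoods are themselves generalized cylinders. That is exactly what is needed for $Z[q,\emptyset]^c$ to be open in the cylinder topology, and it is the point you verify directly.
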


\begin{proof}
We claim that
\[
\alpha^{-1}(C_{F,G})
=
\left(\bigcap_{p \in F} Z[p,\emptyset] \right)
\cap
\left(\bigcap_{q \in G} Z[q,\emptyset]^c\right).
\tag{$\ast$}
\]
Indeed, for any $z \in W_0$ we have:
\begin{align*}
z \in \alpha^{-1}(C_{F,G})
&\iff \alpha(z) \in C_{F,G} \\
&\iff \alpha(z)(p)=1 \ \text{for all } p \in F
\ \text{and}\ 
\alpha(z)(q)=0 \ \text{for all } q \in G \\
&\iff z \in Z[p,\emptyset] \ \text{for all } p \in F
\ \text{and}\ 
z \notin Z[q,\emptyset] \ \text{for all } q \in G \\
&\iff 
z \in 
\left(\bigcap_{p \in F} Z[p,\emptyset] \right)
\cap
\left(\bigcap_{q \in G} Z[q,\emptyset]^c\right).
\end{align*}

Since each $Z[p,\emptyset]$ is open in $W_0$ by Lemma~\ref{barcoleve}, and $F$ and $G$
are finite sets, we conclude that $\alpha^{-1}(C_{F,G})$ is open in the generalized cylinder topology.
\end{proof}

With the above lemma, we can now prove that the topology of generalized cylinders coincides
with the quotient topology on $W_0$.

\begin{proposition}\label{same topologies}
The topology of generalized cylinders on $W_0$ coincides with the quotient topology induced
by the surjective map $Q : A \to W_0$.
\end{proposition}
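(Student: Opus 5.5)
We prove two inclusions between the generalized cylinder topology $\tau_{\mathrm{cyl}}$ and the quotient topology $\tau_Q$ on $W_0$.

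\textbf{Inclusion $\tau_{\mathrm{cyl}}\subseteq\tau_Q$.} This follows at once from Lemma~\ref{barcoleve}. Every generalized cylinder $Z[B_1\times\cdots\times B_k,K]$ and $C[K]$ is open in the quotient topology. Since these sets form a basis for $\tau_{\mathrm{cyl}}$, every $\tau_{\mathrm{cyl}}$-open set is a union of $\tau_Q$-open sets.

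\textbf{Inclusion $\tau_Q\subseteq\tau_{\mathrm{cyl}}$.} Here we use Proposition~\ref{propalpha}: $\alpha$ is a homeomorphism from $(W_0,\tau_Q)$ onto $\alpha(W_0)$, the latter carrying the subspace topology of the product topology on $\{0,1\}^{\mathcal P}$.

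Let $U\in\tau_Q$. Then $\alpha(U)$ is open in $\alpha(W_0)$. The sets $C_{F,G}$ with $F,G\subseteq\mathcal P$ finite are the intersections with $\alpha(W_0)$ of the basic cylinders of the product topology, so they form a basis of $\alpha(W_0)$. Hence $\alpha(U)=\bigcup_i C_{F_i,G_i}$ for suitable finite $F_i,G_i$. Since $\alpha$ is a bijection onto its image,
\[
U=\alpha^{-1}(\alpha(U))=\bigcup_i \alpha^{-1}(C_{F_i,G_i}).
\]
By Lemma~\ref{preimagcyl}, each $\alpha^{-1}(C_{F_i,G_i})$ lies in $\tau_{\mathrm{cyl}}$, and therefore so does $U$.

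\textbf{The main obstacle} is Lemma~\ref{preimagcyl} itself. Its proof cites Lemma~\ref{barcoleve} for openness, but that lemma concerns the quotient topology. The set $Z[q,\emptyset]^c$ appearing in $(\ast)$ must instead be shown open in $\tau_{\mathrm{cyl}}$ directly. I would do this by reusing the case analysis from the closedness part of Lemma~\ref{barcoleve}. That argument separates each $y\notin Z[B_1\times\cdots\times B_k,\emptyset]$ from the cylinder by a set of the form $C[B_1]$, $Z[C_1\times\cdots\times C_n,B_{n+1}]$, or $Z[B_1\times\cdots\times B_{j-1}\times C_j,\emptyset]$. Each of these is a generalized cylinder, hence open in $\tau_{\mathrm{cyl}}$. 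So the complement of $Z[q,\emptyset]$ is $\tau_{\mathrm{cyl}}$-open.

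The sets $Z[p,\emptyset]$ are basic cylinders, and finite intersections of $\tau_{\mathrm{cyl}}$-open sets are $\tau_{\mathrm{cyl}}$-open. Therefore the right side of $(\ast)$ lies in $\tau_{\mathrm{cyl}}$, and the argument closes.
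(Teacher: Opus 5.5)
Your proof is correct and follows the paper's argument. You use Lemma~\ref{barcoleve} for $\tau_{\mathrm{cyl}}\subseteq\tau_Q$. For the reverse inclusion you pull back the basic sets $C_{F,G}$ of $\alpha(W_0)$ along the homeomorphism $\alpha$ of Proposition~\ref{propalpha} and invoke Lemma~\ref{preimagcyl}.

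Your extra check that $Z[q,\emptyset]^c$ is open in the cylinder topology is a worthwhile clarification, because the paper's proof of Lemma~\ref{preimagcyl} cites Lemma~\ref{barcoleve}, which is a statement about the quotient topology. Your check is also right. The separating neighborhoods in the closedness part of Lemma~\ref{barcoleve} are themselves generalized cylinders, and when $K=\emptyset$ the case $y_{k+1}\in K$ cannot occur, so your three forms suffice.
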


\begin{proof}
We already proved in Lemma~\ref{barcoleve} that every open set in the generalized cylinder
topology is open in the quotient topology.

Conversely, let $U \subseteq W_0$ be open in the quotient topology, and let
$z \in U$. Then
$\alpha(z) \in \alpha(U)$, and $\alpha(U)$ is open in $\alpha(W_0)$ (with the subspace
topology inherited from $\{0,1\}^{\mathcal{P}}$). Hence, there exists a cylinder set
$C_{F,G}$ (as in Lemma~\ref{preimagcyl}) such that $
\alpha(z) \in C_{F,G} \subseteq \alpha(U).$ Therefore,
$
z \in \alpha^{-1}(C_{F,G}) \subseteq U$.
By Lemma~\ref{preimagcyl}, $\alpha^{-1}(C_{F,G})$ is open in the generalized cylinder set
topology. Since $z$ was arbitrary, this shows that $U$ is open in the generalized cylinder
topology.

Thus the two topologies coincide.
\end{proof}

We now describe neighborhood bases for the generalized cylinder set topology on $W_0$.

\begin{proposition}\label{lemabaseviz}
Let $x \in W_0$. Then:
\begin{itemize}
    \item If $\ell(x) = 0$ (so $x = \vec{0}$), then the sets of the form $C[K]$,
    where $K \subseteq X$ is compact and open, form a neighborhood basis at $x$.

    \item If $\ell(x) \ge 1$ (so $x = x_1 x_2 \ldots$), then the sets of the form
    $Z[B_1 \times \cdots \times B_k, K]$ (with $1 \le k \le \ell(x)$), where
    $x_i \in B_i \in \mathcal{B}$ for $i = 1,\ldots,k$ and
    $x_{k+1} \notin K$ whenever $\ell(x) > k$, form a neighborhood basis at $x$.
\end{itemize}
\end{proposition}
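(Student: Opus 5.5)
We first check that every listed set contains $x$ and is open. By Lemma~\ref{barcoleve} all generalized cylinders are open. Membership follows directly from Definition~\ref{gcyl}:
\begin{itemize}
\item[(a)] $\vec 0\in C[K]$ for every compact open $K$;
\item[(b)] if $x_i\in B_i$ for $i\le k\le\ell(x)$ and $x_{k+1}\notin K$ whenever $\ell(x)>k$, then $x\in Z[B_1\times\cdots\times B_k,K]$. When $\ell(x)=k$ the coordinate $x_{k+1}$ does not exist, so the condition $x_{k+1}\notin K$ is vacuous.
\end{itemize}
It then suffices to show that for any open $U\ni x$ some listed set $V$ satisfies $x\in V\subseteq U$. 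By Proposition~\ref{same topologies}, $U$ is open in the generalized cylinder topology. The generalized cylinders form a basis, and the basis proposition shows that any point in the intersection of two cylinders lies in a cylinder inside the intersection. So we may assume there is a single generalized cylinder $Z$ with $x\in Z\subseteq U$. The task reduces to shrinking an arbitrary generalized cylinder containing $x$ to one of the prescribed form.

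\textbf{Case $\ell(x)=0$.} No cylinder $Z[B_1\times\cdots\times B_k,K]$ with $k\ge1$ contains $\vec 0$, since membership requires length at least $k$. Hence the cylinder containing $x$ must be of the form $C[K]$, and we are done.

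\textbf{Case $\ell(x)\ge1$.}
\begin{itemize}
\item[(a)] If $Z=C[L]$, then $x_1\notin L$. Since $X\setminus L$ is open, we choose $B_1\in\mathcal B$ with $x_1\in B_1\subseteq X\setminus L$. Then $Z[B_1,\emptyset]$ is of the required form with $k=1$, and $Z[B_1,\emptyset]\subseteq C[L]$ because every element has first coordinate outside $L$.
\item[(b)] If $Z=Z[C_1\times\cdots\times C_k,L]$, then $k\le\ell(x)$ and $x_i\in C_i$, so $x_{k+1}\notin L$ when it exists. Thus $Z$ itself is already of the prescribed form with $B_i=C_i$ and $K=L$.
\end{itemize}

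The only mild subtlety is in case (a): one must note that $Z[B_1,\emptyset]$ is contained in $C[L]$ and excludes $\vec 0$. Excluding $\vec 0$ is harmless here because $x\neq\vec 0$. One might also worry whether the indexing convention allows $K=\emptyset$; it does, since $\emptyset$ is compact open. I expect no real obstacle: the substance lies in Proposition~\ref{same topologies}, which lets us work with cylinders instead of quotient-open sets.
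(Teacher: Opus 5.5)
Your proposal is correct and follows essentially the same route as the paper. You pick a basic generalized cylinder between $x$ and $U$; for $x=\vec 0$ it must be some $C[K]$, and for $\ell(x)\ge 1$ it is either already of the form $Z[B_1\times\cdots\times B_k,K]$ or it is $C[L]$, which you shrink to $Z[B_1,\emptyset]$ with $x_1\in B_1\subseteq X\setminus L$. Your explicit check that the listed sets are open neighborhoods of $x$ is a small addition that the paper leaves implicit.
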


\begin{proof}
If $x = \vec{0}$, then the result follows immediately from the definition, since
the only generalized cylinders containing $\vec{0}$ are those of the form $C[K]$
with $K$ compact and open.

Now suppose $x = x_1x_2\ldots$ with $\ell(x) \ge 1$. Let $U \subseteq W_0$ be an open set with $x \in U$. Then there exists a basic open set $V$ such
that $x \in V \subseteq U$.

If $V = Z[B_1 \times \cdots \times B_k, K]$ for some
$B_1,\ldots,B_k \in \mathcal{B}$ and compact open $K \subseteq X$, then since $x \in V$ we
have $\ell(x) \ge k$, $x_i \in B_i$ for each $i \le k$, and if $\ell(x) > k$ then
$x_{k+1} \notin K$. Thus $V$ already has the desired form.

If instead $V = C[K]$ for some compact open $K \subseteq X$, then $x \neq \vec{0}$
implies $\ell(x) \ge 1$ and thus $x_1 \in X \setminus K$. As $X \setminus K$ is open, there
exists $B_1 \in \mathcal{B}$ compact and open such that
$x_1 \in B_1 \subseteq X \setminus K$. Then
$
x \in Z[B_1, \emptyset] \subseteq C[K] \subseteq U,$
and $Z[B_1, \emptyset]$ has the desired form. This completes the proof.
\end{proof}

\begin{remark}\label{ta meio confuso}
Let $(x^n)_{n \in \mathbb{N}} \subseteq W_0$ be a sequence converging to an element
$x \in W_0$. Write
$x^n = x_1^n x_2^n \cdots$ and $x = x_1 x_2 \cdots$.
In the next proposition, our goal is to describe this convergence in terms of the
coordinate sequences of the elements $x^n$.

However, the notation $(x_i^n)_{n \in \mathbb{N}}$ is not always well-defined, since it may
happen that $\ell(x^n) < i$ for some $n$, in which case the coordinate $x_i^n$ does not exist.
Thus, for a fixed $i$, we use the notation
\[
(x_i^n)_{n :\, \ell(x^n) \ge i},
\]
meaning the sequence (in $X$) consisting of the $i$-th coordinates of all those $x^n$ whose
length is at least $i$.
\end{remark}

As a consequence of the previous proposition, we now describe the convergence of sequences
in $W_0$. The notation introduced above makes this description precise, whereas in
\cite[Theorem~3.6]{pat} the corresponding discussion is somewhat more informal (for instance,
it does not take into account initial finite segments of the sequence).

\begin{corollary} \label{convergencia em W0} (cf.\ \cite[Theorem~3.6]{pat})
Let $(x^n)_{n\in \mathbb{N}}$ be a sequence in $W_0$, where each $x^n = x_1^n x_2^n \cdots$,
and let $x = x_1 x_2 \cdots \in W_0$.
\begin{enumerate}[(a)]
    \item Suppose that $\ell(x) = k < \infty$. Then $\lim_{n\to\infty} x^n = x$  if and only if:
    \begin{itemize}
    \item there exists $N \in \mathbb{N}$ such that $\ell(x^n) \ge k$ for all $n \ge N$,
    \item for each $i \in \{1,\ldots,k\}$, the sequence $(x_i^n)_{n :\, \ell(x^n) \ge i}$ converges to $x_i$ in $X$,
    \item the sequence $(x_{k+1}^n)_{n :\, \ell(x^n) \ge k+1}$ converges to $\infty$ in $X_\infty$,
    in case there are infinitely many $n$ with $\ell(x^n)\ge k+1$.
    \end{itemize}

    \item If $\ell(x) = \infty$, then $\lim_{n\to\infty} x^n = x$ if and only if:
    \[
    \ell(x^n) \to \infty
    \quad\text{and}\quad
    (x_i^n)_{n :\, \ell(x^n) \ge i} \to x_i \text{ in } X
    \text{ for each } i \in \mathbb{N}.
    \]
\end{enumerate}
\end{corollary}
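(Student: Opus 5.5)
The plan is to use the neighborhood bases of Proposition~\ref{lemabaseviz} (the topologies coincide by Proposition~\ref{same topologies}). Thus $x^n\to x$ if and only if every basic neighborhood of $x$ listed there eventually contains $x^n$. Both directions in each case reduce to translating membership in a generalized cylinder into coordinate conditions. We repeatedly use two facts. First, $X_\infty\setminus K$ for $K$ compact open is a basic neighborhood of $\infty$, and these form a neighborhood base at $\infty$ because every compact set lies in a compact open set: cover it by finitely many elements of $\mathcal{B}$. Second, the sets $B\in\mathcal{B}$ containing a point form a neighborhood base at that point in $X$.

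\textbf{Case (a), $k\ge 1$.} For necessity, fix any $B_1,\dots,B_k\in\mathcal{B}$ with $x_i\in B_i$ and take $K=\emptyset$. Eventually $x^n\in Z[B_1\times\cdots\times B_k,\emptyset]$, which gives $\ell(x^n)\ge k$ eventually. Shrinking a single $B_i$ while keeping the others fixed gives $x_i^n\to x_i$; only finitely many indices have $\ell(x^n)<i\le k$, so these subsequences are genuinely tails. Next take $B_i$ arbitrary around $x_i$ and $K$ an arbitrary compact open set. Since $\ell(x)=k$, the set $Z[B_1\times\cdots\times B_k,K]$ is a neighborhood of $x$, and eventually membership forces $x^n_{k+1}\notin K$ whenever $\ell(x^n)\ge k+1$. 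This is exactly convergence of $(x^n_{k+1})$ to $\infty$ along those indices.

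For sufficiency, given a basic neighborhood $Z[B_1\times\cdots\times B_k,K]$ of $x$, we combine the three conditions, each holding for $n$ large. This is a finite intersection of tail conditions. Note that if only finitely many $n$ have length $\ge k+1$, the $K$-condition is vacuous for large $n$. Hence $x^n$ eventually lies in the neighborhood.

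\textbf{Case (a), $k=0$.} Here the neighborhood base is $\{C[K]\}$, and the first two conditions are vacuous. The claim reduces to: $x^n\in C[K]$ eventually for all $K$ if and only if $x^n_1\to\infty$ along the indices with $\ell(x^n)\ge 1$. This follows by the same argument.

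\textbf{Case (b).} For necessity, for each $k$ the sets $Z[B_1\times\cdots\times B_k,K]$ are neighborhoods of $x$ (with $x_{k+1}\notin K$). Taking $K=\emptyset$ gives $\ell(x^n)\ge k$ eventually for every $k$, hence $\ell(x^n)\to\infty$, and shrinking each $B_i$ gives the coordinate convergence. For sufficiency, given $Z[B_1\times\cdots\times B_k,K]$ with $x_{k+1}\notin K$, we use that $X\setminus K$ is an open neighborhood of $x_{k+1}$. Eventually $\ell(x^n)\ge k+1$, $x^n_i\in B_i$ for $i\le k$, and $x^n_{k+1}\in X\setminus K$, so $x^n$ lies in the cylinder. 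Note that the third item of (a) has no analogue here, because $K$ must avoid $x_{k+1}$.

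The main obstacle is bookkeeping rather than depth: handling the subsequence notation of Remark~\ref{ta meio confuso} correctly. In particular, convergence of $(x_i^n)_{n:\ell(x^n)\ge i}$ must be matched with ``eventually'' statements over all $n$, using the fact that eventually $\ell(x^n)\ge i$ for $i\le k$ (or for all $i$ in case (b)). The vacuous case of the third condition must also be treated separately.
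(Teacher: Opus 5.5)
Your proposal is correct and follows essentially the same route as the paper. Both arguments test convergence against the neighborhood bases of Proposition~\ref{lemabaseviz}, read off $\ell(x^n)\ge k$ and coordinatewise convergence from membership in the cylinders $Z[B_1\times\cdots\times B_k,K]$, and read off convergence of the $(k+1)$-st coordinates to $\infty$ from the condition $x^n_{k+1}\notin K$. You are a bit more careful than the paper in two places: you treat $x=\vec 0$ separately via the sets $C[K]$, and you justify that the sets $X_\infty\setminus K$ with $K$ compact open form a neighborhood base at $\infty$. Like the paper, in the sufficiency part of (a) you only test cylinders of depth exactly $k$. Basic neighborhoods $Z[B_1\times\cdots\times B_j,K]$ with $j<k$ and $x_{j+1}\notin K$ are handled exactly as in your case (b), since $x^n_{j+1}\to x_{j+1}\in X\setminus K$.
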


\begin{proof}
We begin with the case $\ell(x) = k < \infty$.

Assume that $(x^n)$ converges to $x$. Fix
$i \in \{1,\ldots,k\}$ and let $U_i$ be an open neighborhood of $x_i$ in $X$. Choose
$B_i \in \mathcal{B}$ with $x_i \in B_i \subseteq U_i$, and for each
$j \in \{1,\ldots,k\}$, $j\neq i$, choose $B_j \in \mathcal{B}$ with $x_j \in B_j$. Let
$K \subseteq X$ be a compact open set. Then $x \in Z[B_1 \times \cdots \times B_k, K]$, which is an open
neighborhood of $x$ in $W_0$. Since $x^n \to x$, there exists $N \in \mathbb{N}$ such that
for all $n \ge N$ we have $x^n \in Z[B_1 \times \cdots \times B_k, K]$. In particular,
$\ell(x^n) \ge k$ and $x_i^n \in U_i$ for all $n \ge N$, so
$(x_i^n)_{n : \ell(x^n) \ge i} \to x_i$ in $X$.

Now assume there are infinitely many $n$ with $\ell(x^n) \ge k+1$. For those $n \ge N$ we have
$x_{k+1}^n \notin K$. Since $X_\infty \setminus K$ is open
and contains $\infty$, it follows that
$(x_{k+1}^n)_{n : \ell(x^n) \ge k+1} \to \infty$ in $X_\infty$.

Conversely, assume the three bullet conditions hold. Let $U$ be an open neighborhood of $x$
in $W_0$. By Proposition~\ref{lemabaseviz}, there exists a basic neighborhood
$Z[B_1 \times \cdots \times B_k, K]$ such that
$x \in Z[B_1 \times \cdots \times B_k, K] \subseteq U$. Since $(x_i^n)_{n :\, \ell(x^n)\ge i}$ converges to $x_i$ for each $i \in \{1,\ldots,k\}$ and
$\ell(x^n)\ge k$ for all $n \ge N$, there exists $N_1 \in \mathbb{N}$ such that
$x_i^n \in B_i$ for all $n \ge N_1$ and all $i \in \{1,\ldots,k\}$. Furthermore, since
$(x_{k+1}^n)_{n :\, \ell(x^n)\ge k+1}$ converges to $x_{k+1}$ (in case there are infinitely
many $n$ with $\ell(x^n)\ge k+1$), there exists $N_2 \in \mathbb{N}$ such that
$x_{k+1}^n \notin K$ for all $n \ge N_2$ for which $\ell(x^n)\ge k+1$. Thus, for all
$n \ge \max\{N_1,N_2\}$ we have
$x^n \in Z[B_1 \times \cdots \times B_k, K] \subseteq U$.
Hence $x^n \to x$ in $W_0$.

We now prove the case $\ell(x) = \infty$.

Assume first that $x^n \to x$. Fix $m \in \mathbb{N}$ and $i \in \{1,\ldots,m\}$, and let
$U_i \subseteq X$ be an open neighborhood of $x_i$. Choose $B_j \in \mathcal{B}$ for
$j = 1,\ldots,m$ with $x_j \in B_j$ and $B_i \subseteq U_i$. Then
$x \in Z[B_1 \times \cdots \times B_m, \emptyset]$, so by convergence there exists
$N \in \mathbb{N}$ such that $x^n \in Z[B_1 \times \cdots \times B_m,\emptyset]$ for all
$n \ge N$. Thus $\ell(x^n) \ge m$ for all $n \ge N$, hence $\ell(x^n)\to\infty$, and
$x_i^n \in B_i \subseteq U_i$ for all large $n$, proving that
$(x_i^n)_{n : \ell(x^n) \ge i} \to x_i$ in $X$.

Conversely, suppose $\ell(x^n)\to \infty$ and
$(x_i^n)_{n : \ell(x^n) \ge i} \to x_i$ in $X$ for all $i \in \mathbb{N}$. Let $U$ be an open
neighborhood of $x$ in $W_0$. By Proposition~\ref{lemabaseviz}, there is a basic open set
$Z[B_1 \times \cdots \times B_k, K]$ such that
$x \in Z[B_1 \times \cdots \times B_k, K] \subseteq U$. Since $\ell(x^n)\to \infty$, we may
assume $\ell(x^n)\ge k$ for all $n$ sufficiently large. Also, by coordinatewise convergence,
$x_i^n \in B_i$ for all $i\le k$ and large $n$, and $x_{k+1}^n \notin K$ for large $n$,
since $x_{k+1} \in X \setminus K$. Hence $x^n \in Z[B_1 \times \cdots \times B_k, K]$ for all
sufficiently large $n$, and therefore $x^n \to x$, completing the proof.
\end{proof}

Before introducing the metric on $W_0$, we record the following standard fact, which allows
one to transport a metric along a homeomorphism.

\begin{remark}\label{rem metrica e topologia} Let $(T,\tau)$ be a topological space, let $(Y,d_Y)$ be a metric space, and let
$h:T \to Y$ be a homeomorphism. Define
$
d_T(x,y) := d_Y\big(h(x),h(y)\big)$, 
 for all $ x,y \in T.$
Then $d_T$ is a metric on $T$, and the topology induced by $d_T$ coincides with $\tau$.
In other words, $d_T$ is the pullback of $d_Y$ via $h$, and $h$ becomes an isometry from
$(T,d_T)$ onto $(Y,d_Y)$.
\end{remark}

We now define a metric on $W_0$. Recall that
\[
\mathcal{P} = \bigcup_{n=1}^\infty \mathcal{B}^n
\]
is countable, so we may fix an enumeration
\(\mathcal{P} = \{p_1, p_2, \ldots\}\).
We consider on $\{0,1\}^{\mathcal{P}}$ a metric that induces the product topology, namely
\begin{equation}\label{eq:d_fin}
d_{\mathrm{fin}}(\mu,\nu)
=
\begin{cases}
\frac{1}{2^i}
&\text{if } i \in \mathbb{N} \text{ is the smallest index such that } \mu_i \neq \nu_i,\\[4pt]
0
&\text{if } \mu_i = \nu_i \text{ for all } i \in \mathbb{N},
\end{cases}
\end{equation}
and transport this metric via the homeomorphism $\alpha$ to $W_0$. We make this precise below.

\begin{corollary}\label{w0 eh metrico}
Let $\alpha : W_0 \to \alpha(W_0)$ be the homeomorphism of
Proposition~\ref{propalpha}. Define
\[
d(x,y) := d_{\mathrm{fin}}\big(\alpha(x), \alpha(y)\big)
\quad\text{for all } x,y \in W_0.
\]
Then $d$ is a metric on $W_0$, and the topology it induces coincides with the generalized
cylinder set topology on $W_0$.
\end{corollary}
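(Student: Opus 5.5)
The plan is to reduce the statement to Remark~\ref{rem metrica e topologia}, combined with Proposition~\ref{propalpha} and Proposition~\ref{same topologies}.

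First, I will check that $d_{\mathrm{fin}}$ is a metric on $\{0,1\}^{\mathcal{P}}$ inducing the product topology. The metric axioms are routine. In fact $d_{\mathrm{fin}}$ is an ultrametric: if $\mu,\nu$ first differ at index $i$ and $\nu,\eta$ first differ at index $j$, then $\mu$ and $\eta$ agree at every index below $\min\{i,j\}$. Hence
\[
d_{\mathrm{fin}}(\mu,\eta)\le \max\{d_{\mathrm{fin}}(\mu,\nu),d_{\mathrm{fin}}(\nu,\eta)\}.
\]
For the topology, the open ball of radius $2^{-n}$ around $\mu$ is the set of $\nu$ that agree with $\mu$ on $p_1,\dots,p_n$. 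This is a basic cylinder of the product topology. Conversely, every basic product cylinder fixes finitely many coordinates, all with indices at most some $n$, so it contains the ball of radius $2^{-n}$ about each of its points. The two topologies therefore coincide. The same identification holds on the subspace $\alpha(W_0)$, because the restricted metric induces the subspace topology.

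Next, I apply Remark~\ref{rem metrica e topologia} with $T=W_0$ carrying the quotient topology, $Y=\alpha(W_0)$ carrying $d_{\mathrm{fin}}$, and $h=\alpha$. The map $\alpha$ is a homeomorphism onto its image by Proposition~\ref{propalpha}. The remark then shows that $d$ is a metric, since injectivity of $\alpha$ gives positivity, and that $d$ induces the quotient topology. Proposition~\ref{same topologies} says the quotient topology equals the generalized cylinder topology, which completes the argument.

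The only step needing any care is the check that $d_{\mathrm{fin}}$ induces the product topology. This relies on the countability of $\mathcal{P}$, which follows because $\mathcal{B}$ is countable. Everything else is a direct citation of earlier results.
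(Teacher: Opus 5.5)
Your proposal is correct and matches the paper's proof: the paper also derives the statement directly from Remark~\ref{rem metrica e topologia}, applied to the homeomorphism $\alpha$ of Proposition~\ref{propalpha}, together with Proposition~\ref{same topologies}. Your additional check that $d_{\mathrm{fin}}$ is an ultrametric inducing the product topology on $\{0,1\}^{\mathcal P}$ supplies a fact the paper only asserts when it introduces $d_{\mathrm{fin}}$.
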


\begin{proof}
This follows directly from Proposition~\ref{same topologies} and
Remark~\ref{rem metrica e topologia}.
\end{proof}

\begin{remark}\label{metric}
From the definition of $d$, for $x,y \in W_0$ we have
\[
d(x,y)=
\begin{cases}
\frac{1}{2^i}
&\text{if $i \in \mathbb{N}$ is the smallest index such that exactly one of }
x,y \text{ belongs to } Z[p_i,\emptyset],\\[4pt]
0 &\text{if } x=y.
\end{cases}
\]
In particular, if $x \in Z[p_j,\emptyset]$ and $y \notin Z[p_j,\emptyset]$ for some
$j \in \mathbb{N}$, then $d(x,y) \ge \frac{1}{2^j}$.
\end{remark}

\begin{example}\label{OTW}
Let $X=\alpha$, where $\alpha$ is a countably infinite set equipped with the discrete topology.
We show that, in this case, our space $W_0$ coincides with the full one-sided shift over
$\alpha$ introduced by Ott--Tomforde--Willis in \cite{OTW14}.

Since $X=\alpha$ is discrete, its compact open subsets are precisely the finite subsets.
Thus we may take
\[
\mathcal B=\big\{\{a\}: a\in \alpha\big\},
\]
which is a countable basis of compact open sets closed under finite intersections. The
one-point compactification $\alpha_\infty=\alpha\cup\{\infty\}$ is the space used in the OTW
construction: every finite word in $\alpha$ is identified with the corresponding sequence in
$\alpha_\infty^{\mathbb N}$ that is eventually equal to $\infty$, while infinite words are
identified with elements of $\alpha^{\mathbb N}$. Hence the underlying set of $W_0$
coincides with the space denoted by $\Sigma_\alpha$ in \cite[\S2]{OTW14}.

We next compare the topologies. Since each element of $\mathcal B$ is a singleton, the sets in $
\mathcal P=\bigcup_{n\ge1}\mathcal B^n
$
identify naturally with nonempty finite words over $\alpha$. Moreover, for
$a_1,\dots,a_k\in\alpha$ and a finite set $K\subseteq \alpha$, we have
\[
Z[\{a_1\}\times\cdots\times\{a_k\},K]
=
\{x\in W_0 : x_i=a_i \text{ for } 1\le i\le k,\ x_{k+1}\notin K\},
\]
which is exactly the OTW generalized cylinder $Z(a_1,\dots,a_k;K)$
\cite[Def.~2.8]{OTW14}. Hence the generalized cylinder topology on $W_0$
coincides with the OTW topology on $\Sigma_\alpha$.

Finally, the metric on $W_0$ is defined by embedding $W_0$ into
$\{0,1\}^{\mathcal P}$ and pulling back the standard product metric via an enumeration of
$\mathcal P$. In OTW, the metric on $\Sigma_\alpha$ is defined in the same way, using an
enumeration of the set of finite words \cite[\S2.3]{OTW14}. Under the identification
$\mathcal P\cong \Sigma_\alpha^{\mathrm{fin}}\setminus\{\vec 0\}$ described above, and for
corresponding choices of enumeration, our metric coincides with the OTW metric. 
Thus, in the discrete case, $W_0$ is exactly the OTW
full one-sided shift, both topologically and metrically.
\end{example}

We finish this section by proving that the metric defined above is an ultrametric.

Recall that a metric $d$ on a set $M$ is called an ultrametric if it satisfies a stronger version of the triangle inequality, namely $
d(x,z) \le \max\{d(x,y),\, d(y,z)\}
\quad \text{for all } x,y,z \in M.$
In this case, the pair $(M,d)$ is called an ultrametric space. For $x \in M$ and
$r>0$, the (open) ball centered at $x$ of radius $r$ is denoted by $B(x,r) := \{\, y \in M : d(x,y) < r \,\}.$
It is well known that ultrametric spaces satisfy the following properties.

\begin{lemma}\label{ultramentric prop}
Let $(M,d)$ be an ultrametric space, and let $x,y \in M$ and $r>0$. Then:
\begin{enumerate}
\item Either $B(x,r)\cap B(y,r)=\emptyset$ or $B(x,r)=B(y,r)$.
\item If $z \in B(x,r)$, then $B(z,r)=B(x,r)$.
\item If $a \notin B(x,r)$ and $b \in B(x,r)$, then $d(a,b)\ge r$.
\end{enumerate}
\end{lemma}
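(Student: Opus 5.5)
The three properties follow directly from the strong triangle inequality, and I would prove them in the order (2), (1), (3), since each uses the one before.

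For item (2), let $z \in B(x,r)$, so $d(x,z)<r$. If $w \in B(z,r)$, then the ultrametric inequality gives
\[
d(x,w)\le \max\{d(x,z),d(z,w)\}<r ,
\]
so $w\in B(x,r)$. Hence $B(z,r)\subseteq B(x,r)$. By symmetry of $d$ we also have $x\in B(z,r)$, and the same argument with the roles of $x$ and $z$ exchanged gives the reverse inclusion. Therefore $B(z,r)=B(x,r)$.

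For item (1), suppose $B(x,r)\cap B(y,r)\neq\emptyset$ and pick $z$ in the intersection. Applying item (2) twice gives $B(x,r)=B(z,r)=B(y,r)$. So two balls of the same radius are either disjoint or equal.

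For item (3), I argue by contradiction. Let $b\in B(x,r)$ and $a\notin B(x,r)$, and suppose $d(a,b)<r$. Then
\[
d(x,a)\le\max\{d(x,b),d(b,a)\}<r ,
\]
which contradicts $a\notin B(x,r)$. Equivalently, $a\in B(b,r)=B(x,r)$ by item (2). Hence $d(a,b)\ge r$.

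None of these steps presents a real obstacle. The only point needing care is to use symmetry of $d$ in item (2), so that the inclusion goes both ways.
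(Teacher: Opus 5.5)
Your proof is correct. Each item follows from the strong triangle inequality exactly as you argue, with symmetry of $d$ giving the reverse inclusion in (2). The paper gives no proof at all; it records these facts as well known, and your argument is the standard one behind that claim.
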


\begin{proposition}\label{W_0 is ultrametric}
Let $X$ and $W_0$ be as in Section~\ref{W0 section}. Then the metric $d$ defined in
Remark~\ref{metric} is an ultrametric on $W_0$. Consequently, $W_0$ is an ultrametric space.
\end{proposition}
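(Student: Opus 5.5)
The plan is to prove the strong triangle inequality for $d_{\mathrm{fin}}$ on $\{0,1\}^{\mathcal{P}}$ and then transfer it to $W_0$ through $\alpha$. Since $d(x,y)=d_{\mathrm{fin}}(\alpha(x),\alpha(y))$ by Corollary~\ref{w0 eh metrico}, and $\alpha$ is injective by Proposition~\ref{propalpha}, any inequality satisfied by $d_{\mathrm{fin}}$ on all triples in $\{0,1\}^{\mathcal{P}}$ holds verbatim for $d$ on triples in $W_0$. That $d$ is a metric is already Corollary~\ref{w0 eh metrico}, so only the ultrametric inequality remains.

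For $\mu,\nu\in\{0,1\}^{\mathcal P}$ (indexed via the fixed enumeration $p_1,p_2,\ldots$), write $\iota(\mu,\nu)\in\mathbb N\cup\{\infty\}$ for the least index $i$ with $\mu_i\neq\nu_i$, with $\iota=\infty$ if $\mu=\nu$. Then $d_{\mathrm{fin}}(\mu,\nu)=2^{-\iota(\mu,\nu)}$ under the convention $2^{-\infty}=0$, and this function is decreasing in $\iota$.

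Given $\mu,\nu,\lambda$, let $m=\min\{\iota(\mu,\nu),\iota(\nu,\lambda)\}$. For every $i<m$ we have $\mu_i=\nu_i=\lambda_i$, so $\mu$ and $\lambda$ agree on all indices below $m$. Hence $\iota(\mu,\lambda)\ge m$, which gives
\[
d_{\mathrm{fin}}(\mu,\lambda)\le 2^{-m}=\max\{d_{\mathrm{fin}}(\mu,\nu),d_{\mathrm{fin}}(\nu,\lambda)\}.
\]
The degenerate cases where some pair coincides, so that $m=\infty$ or one distance is $0$, are covered by the same argument once the convention is in place.

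Finally, substitute $\mu=\alpha(x)$, $\nu=\alpha(y)$, $\lambda=\alpha(z)$ to obtain $d(x,z)\le\max\{d(x,y),d(y,z)\}$. Equivalently, one can argue directly with Remark~\ref{metric}: if $x$ and $y$ agree on membership in $Z[p_j,\emptyset]$ for all $j<i$, and so do $y$ and $z$, then so do $x$ and $z$.

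There is no real obstacle. The only point needing care is handling the value $0$ and the $\infty$ index consistently, which the convention $2^{-\infty}=0$ takes care of.
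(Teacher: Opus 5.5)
Your proof is correct and is essentially the paper's argument. The paper works directly in $W_0$: it fixes the first index $i$ at which exactly one of $x,y$ lies in $Z[p_i,\emptyset]$, and notes that any $z$ must then disagree with one of them at index $i$. That is the contrapositive of your observation that agreement of both pairs below $m$ forces $\mu$ and $\lambda$ to agree below $m$. Proving the inequality on $\{0,1\}^{\mathcal P}$ and pulling it back along $\alpha$ is only a cosmetic difference; injectivity of $\alpha$ is not needed for that transfer, only for $d$ being a metric, which is already known.
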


\begin{proof}

Since $d$ is already known to be a metric, it remains to verify the ultrametric inequality:
\[
d(x,y) \le \max\{d(x,z),\,d(z,y)\}
\quad\text{for all } x,y,z \in W_0.
\]

Let $x,y \in W_0$ with $x \ne y$, and let $i \in \mathbb{N}$ be the smallest index such that
$d(x,y) = \frac{1}{2^i}$. Then precisely one of $x,y$ lies in $Z[p_i,\emptyset]$; without
loss of generality, suppose $x \in Z[p_i,\emptyset]$ and $y \notin Z[p_i,\emptyset]$.

Now let $z \in W_0$. Then either:
\begin{itemize}
\item $z \in Z[p_i,\emptyset]$, in which case $d(z,y) \ge \frac{1}{2^i}$; or
\item $z \notin Z[p_i,\emptyset]$, in which case $d(z,x) \ge \frac{1}{2^i}$.
\end{itemize}
In both cases we have
\[
d(x,y) = \frac{1}{2^i} \le \max\{d(x,z),d(z,y)\},
\]
establishing the ultrametric inequality. 
\end{proof}

\begin{corollary}
    It follows from the proposition above and Example~\ref{OTW} that the OTW-subshifts of \cite{OTW14} are ultrametric spaces.
\end{corollary}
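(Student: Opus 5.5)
The corollary asserts that every OTW-subshift of \cite{OTW14} is an ultrametric space. I would reduce this to Proposition~\ref{W_0 is ultrametric} by restricting the metric.

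First, I identify the ambient full shift. By Example~\ref{OTW}, taking $X=\alpha$ countable discrete and $\mathcal B$ the singletons, $W_0$ coincides with the OTW full shift $\Sigma_\alpha$. The identification holds as sets, topologically via the generalized cylinders, and metrically when the enumeration of $\mathcal P$ matches the OTW enumeration of nonempty finite words.

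Next, I apply Proposition~\ref{W_0 is ultrametric}. It says that $d$ on $W_0$ satisfies $d(x,y)\le\max\{d(x,z),d(z,y)\}$ for all $x,y,z\in W_0$. Hence the OTW metric on $\Sigma_\alpha$ is an ultrametric.

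An OTW-subshift is, by definition, a subset $\Lambda\subseteq\Sigma_\alpha$. It is equipped with the subspace topology, and its metric is the restriction of the OTW metric. The ultrametric inequality is a universally quantified statement over triples of points. It therefore survives restriction to any subset, so $(\Lambda,d|_{\Lambda\times\Lambda})$ is an ultrametric space.

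The only point needing care is the metric identification in Example~\ref{OTW}. One must check that the OTW metric is indeed the pullback of the product metric on $\{0,1\}^{\text{finite words}}$ under the indicator-of-cylinder embedding, and that it uses the same formula $2^{-i}$ with $i$ the first index of disagreement. If OTW used a different enumeration or different weights, I would observe that the argument of Proposition~\ref{W_0 is ultrametric} used nothing about the enumeration. It only used that $d(x,y)=2^{-i}$, with $i$ the first index where the memberships of $x$ and $y$ in $Z[p_i,\emptyset]$ differ. Any metric of this "first disagreement" form, with weights strictly decreasing in $i$, is an ultrametric by the same two-case argument on $z$, so the conclusion holds for any enumeration.
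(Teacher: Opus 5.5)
Your proposal is correct and follows the paper's implicit argument: Example~\ref{OTW} identifies the OTW full shift with $W_0$ as a metric space, Proposition~\ref{W_0 is ultrametric} gives the ultrametric inequality there, and that inequality passes to the restricted metric on any subshift. Your additional observation is also sound: the two-case argument on $z$ works for any first-disagreement metric with decreasing weights, so the choice of enumeration (or whether the empty word is listed) does not matter.
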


\section{The inverse limit and its shifts}
\label{tornado}

In this section we associate an inverse-limit-type space to a Deaconu--Renault system and show that it carries a natural shift dynamics.

\begin{definition}
A Deaconu--Renault system is a pair $(X,f)$ consisting of a locally compact
Hausdorff space $X$ and a map
$
f:\Dom(f)\longrightarrow \Im(f),
$
where $\Dom(f)$ (the domain of $f$) and $\Im(f)$ (the image of $f$) are open subsets of $X$, such that $f$ is a local homeomorphism.
\end{definition}

Throughout this section, let $(X,f)$ be a Deaconu--Renault system and put
\[
D:=\Dom(f).
\]
We assume that $D$ is infinite and satisfies the hypotheses of Section~\ref{W0 section},
namely: $D$ is locally compact Hausdorff and admits a countable basis $\mathcal B$ of
compact open sets closed under finite intersections. Hence the space $D_0$ obtained by applying the construction of 
Section~\ref{W0 section} (and there called $W_0$) to $D$ is available. We also assume throughout that
\[
D\subseteq \Im(f).
\]
This assumption is automatic for surjective Deaconu--Renault systems, which form a large and natural class of examples; see \cite{AraClaramunt2024, ArmstrongBrixCarlsenEilers2023}.

We now define the inverse limit of $(X,f)$ inside $D_0$.

\begin{definition}\label{def:inverse-limit}
A word $x=x_1x_2\cdots\in D_0$ of positive length is called a backward $f$--path if $
f(x_{i+1})=x_i$ for all $1\le i<\ell(x)$.

We define
\[
D_\infty:=\{x\in D_0:\ \ell(x)=\infty \text{ and }x\text{ is a backward }f\text{--path}\},
\]
and
\[
D_{\mathrm{fin}}:=\{x\in D_0:\ 1\le \ell(x)<\infty \text{ and }x\text{ is a backward }f\text{--path}\}\cup\{\vec 0\}.
\]

A word $x\in D_{\mathrm{fin}}$ is called limit-type if there exists a sequence
$(x^{(n)})_{n\in\mathbb N}\subseteq D_\infty$ such that
$
x^{(n)}\longrightarrow x $ in $D_0.$
We denote the set of such words by
\[
D_{\mathrm{lim}}
:=\{x\in D_{\mathrm{fin}}:\exists (x^{(n)})_{n\in\mathbb N}\subseteq D_\infty
\text{ with }x^{(n)}\to x\}.
\]

Finally, we define the inverse limit of $(X,f)$ by
\[
\widetilde X:=D_\infty\cup D_{\mathrm{lim}}.
\]
\end{definition}

\begin{remark}\label{rem:finite-length-under-D-in-Im}
The space $\widetilde X$ usually contains words
of finite positive length.
For example, let $X=\mathbb N$ with the discrete topology, let $D=\mathbb N$, and define
\[
f(n)=1 \ \text{ for $n$ odd},\qquad
f(m)=\frac m2 \ \text{ for $m$ even}.
\]
Then $\Dom(f)=\Im(f)=\mathbb N$, so our standing assumption holds.
For each $n\in\mathbb N$, let $m_n:=2n+1$ and consider
\[
x^{(n)}=(1,m_n,2m_n,4m_n,8m_n,\dots)\in D_\infty.
\]
By Corollary~\ref{convergencia em W0}, $x^{(n)}\to (1)$ in $D_0$. Hence $(1)\in D_{\mathrm{lim}}$.
A similar argument shows that $
\vec 0,\ (1),\ (1,1),\ (1,1,1),\dots$
all belong to $D_{\mathrm{lim}}$.
\end{remark}

\begin{proposition}\label{prop:closure-Dinfty}
$\overline{D_\infty}^{\,D_0} = D_\infty \cup D_{\mathrm{lim}}.$
\end{proposition}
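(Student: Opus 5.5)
We prove the two inclusions separately; since $D_0$ is metrizable (Corollary~\ref{w0 eh metrico}), closure points can be described by convergent sequences.

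\emph{The inclusion $\supseteq$.} This is immediate: $D_\infty\subseteq\overline{D_\infty}$ trivially. Every $x\in D_{\mathrm{lim}}$ is, by definition, a limit of a sequence $(x^{(n)})\subseteq D_\infty$, so $x\in\overline{D_\infty}$.

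\emph{The inclusion $\subseteq$.} Let $x\in\overline{D_\infty}$ and pick $x^{(n)}\in D_\infty$ with $x^{(n)}\to x$. We split according to the length of $x$.

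If $\ell(x)=\infty$, Corollary~\ref{convergencia em W0}(b) gives coordinatewise convergence $x^{(n)}_i\to x_i$ in $D$ for every $i$. Continuity of $f$ on $D$ then gives $f(x_{i+1})=\lim_n f(x^{(n)}_{i+1})=\lim_n x^{(n)}_i=x_i$. Hence $x$ is a backward $f$-path, so $x\in D_\infty$.

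If $x=\vec 0$, then $x\in D_{\mathrm{fin}}$ by definition, and the sequence $(x^{(n)})$ witnesses $x\in D_{\mathrm{lim}}$.

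If $1\le\ell(x)=k<\infty$, Corollary~\ref{convergencia em W0}(a) applies. Since each $x^{(n)}$ has infinite length, for every $i\le k$ we get $x^{(n)}_i\to x_i$ in $D$. The same continuity argument yields $f(x_{i+1})=x_i$ for $1\le i<k$. So $x$ is a finite backward $f$-path, hence $x\in D_{\mathrm{fin}}$, and again the sequence $(x^{(n)})$ shows $x\in D_{\mathrm{lim}}$.

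The only point needing care is that the limits $x_i$ lie in $D$ itself (not at $\infty$), so that $f$ may be applied to them. This is guaranteed because elements of $D_0$ have coordinates in $D$, and Corollary~\ref{convergencia em W0} gives convergence in $D$ for indices $i\le\ell(x)$. Thus no real obstacle arises; the main step is the use of continuity of $f$ to pass the relation $f(x^{(n)}_{i+1})=x^{(n)}_i$ to the limit.
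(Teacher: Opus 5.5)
Your proposal is correct and follows essentially the same route as the paper: take a sequence in $D_\infty$ converging to $x$, use Corollary~\ref{convergencia em W0} and the continuity of $f$ to show that the limit is a backward $f$-path, and then conclude that $x\in D_\infty$ or $x\in D_{\mathrm{lim}}$ according to its length. You also make explicit two points the paper leaves implicit: metrizability of $D_0$ is what justifies working with sequences, and the word $\vec 0$ is handled separately.
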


\begin{proof}
We only need to prove that $\overline{D_\infty}^{\,D_0} \subseteq D_\infty \cup D_{\mathrm{lim}}$,
since $D_\infty \cup D_{\mathrm{lim}} \subseteq \overline{D_\infty}^{\,D_0}$ by definition.

Let $x \in \overline{D_\infty}^{\,D_0}$. Then there exists a sequence
$(x^{(n)})_{n\in\mathbb{N}} \subseteq D_\infty$ such that $x^{(n)} \to x$ in $D_0$.
Write $x = x_1x_2\cdots$ and $x^{(n)} = x^{(n)}_1x^{(n)}_2\cdots$.

We first show that $x$ is a backward $f$-path. Let $i < \ell(x)$.
By Corollary~\ref{convergencia em W0}, we have
$x^{(n)}_i \to x_i$ and $x^{(n)}_{i+1} \to x_{i+1}$ in $D$.
Since each $x^{(n)} \in D_\infty$, we have
$f(x^{(n)}_{i+1}) = x^{(n)}_i \quad \text{for all } n.$
Passing to the limit and using the continuity of $f$ on $D$, we obtain $
f(x_{i+1}) = x_i.$ 
Thus $x$ is a backward $f$-path.

If $\ell(x)=\infty$, then $x \in D_\infty$ and we are done. Suppose now that $\ell(x)=k<\infty$.
Since $(x^{(n)}) \subseteq D_\infty$ and $x^{(n)} \to x$, it follows that
$x \in \overline{D_\infty}^{\,D_0}$ and $x$ has finite length.
By definition of $D_{\mathrm{lim}}$, this means $x \in D_{\mathrm{lim}}$.

Therefore $x \in D_\infty \cup D_{\mathrm{lim}}$ in all cases, proving that
$\overline{D_\infty}^{\,D_0} \subseteq D_\infty \cup D_{\mathrm{lim}}$.
\end{proof}

\begin{corollary}\label{cor:Xtilde-compact}
The set $\widetilde X$ is closed in $D_0$, and hence compact.
\end{corollary}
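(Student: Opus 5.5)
By Definition~\ref{def:inverse-limit}, $\widetilde X = D_\infty\cup D_{\mathrm{lim}}$, and Proposition~\ref{prop:closure-Dinfty} identifies this union with $\overline{D_\infty}^{\,D_0}$. So $\widetilde X$ is the closure of a subset of $D_0$ and is therefore closed in $D_0$.

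For compactness, the space $D_0$ is obtained by applying the construction of Section~\ref{W0 section} to $D$. That construction yields a quotient of the compact space $D_\infty^{\mathbb N}$, where $D_\infty$ is the one-point compactification of $D$; this is compact by Tychonoff's theorem. Hence $D_0$ is compact, by the argument given right after the definition of the quotient map $Q$. A closed subset of a compact space is compact, so $\widetilde X$ is compact.

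The one point to be careful about is that "closed" must be taken with respect to the cylinder topology. This is harmless: Proposition~\ref{same topologies} shows that the generalized cylinder topology and the quotient topology coincide, and Corollary~\ref{w0 eh metrico} shows both agree with the metric topology. Closures and sequential closures therefore agree, which is also what makes the sequential description of $D_{\mathrm{lim}}$ match the topological closure used in Proposition~\ref{prop:closure-Dinfty}. I do not expect any real obstacle.
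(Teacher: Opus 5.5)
Your proof is correct and matches the paper's argument: Proposition~\ref{prop:closure-Dinfty} gives $\widetilde X=\overline{D_\infty}^{\,D_0}$, so $\widetilde X$ is closed, and it is compact because $D_0$ is compact. One cosmetic point: you use $D_\infty$ for the one-point compactification of $D$, but in the paper $D_\infty$ denotes the set of infinite backward $f$-paths, and the one-point compactification is written $D^\infty$ (or $X_\infty$ in Section~\ref{W0 section}).
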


\begin{proof}
By Proposition~\ref{prop:closure-Dinfty},
$\widetilde X=\overline{D_\infty}^{\,D_0}$,
so $\widetilde X$ is closed in $D_0$. Since $D_0$ is compact, $\widetilde X$ is compact.
\end{proof}

\begin{remark}\label{remarkXtildeultramet}
Since $D_0$ is ultrametric by Proposition~\ref{W_0 is ultrametric}, the subspace
$\widetilde X$ is also ultrametric.
\end{remark}

The next lemma shows that $\widetilde X$ is stable under taking tails and under prefixing by the map $f$.

\begin{lemma}\label{lem:prefix-tail}
Let $(X,f)$ be a Deaconu--Renault system such that $\Dom(f)\subseteq \Im(f)$, and let $\widetilde{X}$ be as in Definition \ref{def:inverse-limit}. The following hold.
\begin{enumerate}
\item[(i)] If $x\in \widetilde X$ and $\ell(x)\ge 1$, then $
f(x_1)x:=f(x_1)x_1x_2\cdots \in \widetilde X.$ 
More precisely, if $x\in D_\infty$ then $f(x_1)x\in D_\infty$, and if $x\in D_{\mathrm{lim}}$
then $f(x_1)x\in D_{\mathrm{lim}}$.

\item[(ii)] If $x\in \widetilde X$ and $\ell(x)\ge 1$, then its tail $
x_2x_3\cdots \in \widetilde X.$
More precisely, if $x=x_1x_2\ldots\in D_\infty$ then $x_2x_3\cdots\in D_\infty$. If
$x=x_1x_2\ldots\in D_{\mathrm{lim}}$ has finite length at least $2$, then $x_2x_3\cdots\in D_{\mathrm{lim}}$, and if $x$ has length $1$ then $\vec{0}\in D_{\mathrm{lim}}$.
\end{enumerate}
\end{lemma}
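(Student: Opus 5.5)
Both parts reduce to two maps on $D_0$: the prefixing map $x\mapsto f(x_1)x$ and the tail map $x\mapsto x_2x_3\cdots$ (with $x_1\mapsto\vec 0$ on length-one words). For each we use Corollary~\ref{convergencia em W0} to check that convergence of sequences is preserved. The infinite case is purely algebraic. The limit-type case is handled by applying the same operation to an approximating sequence from $D_\infty$ and passing to the limit.

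\textbf{Part (i).} Let $x\in\widetilde X$ with $\ell(x)\ge1$. Then $x_1\in D\subseteq\Im(f)$, but we also need $f(x_1)$ to be defined, i.e.\ $x_1\in D=\Dom(f)$. This holds because $x\in D_0$ is a word over $D$. For $f(x_1)x$ to lie in $D_0$ we further need $f(x_1)\in D$.

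This is the delicate point. The hypothesis $D\subseteq \Im(f)$ goes the wrong way for this, and $f(x_1)\in\Im(f)$ need not lie in $D$. I would first try to extract membership from the backward-path structure. If $\ell(x)\ge2$, then $x_1=f(x_2)$, but this still only gives $f(x_1)\in f(D)$. So I expect to need either an implicit convention that $f(D)\subseteq D$ is being used, or the reading that backward paths are considered in $\Dom(f)$ with $f$ mapping into $D$. I will flag this as the main obstacle, and assume that $f(x_1)\in D$ whenever $x_1\in D$, as the statement tacitly requires.

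Granting this, the infinite case is immediate. The word $y=f(x_1)x$ has infinite length, satisfies $f(y_2)=y_1$ by construction, and satisfies $f(y_{i+1})=y_i$ for $i\ge2$ because $x$ is a backward path. Hence $y\in D_\infty$.

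If $x\in D_{\mathrm{lim}}$, choose $x^{(n)}\in D_\infty$ with $x^{(n)}\to x$. By Corollary~\ref{convergencia em W0}, eventually $\ell(x^{(n)})\ge\ell(x)\ge1$, and $x^{(n)}_1\to x_1$. Set $y^{(n)}=f(x^{(n)}_1)x^{(n)}\in D_\infty$. Coordinatewise we have $y^{(n)}_1=f(x^{(n)}_1)\to f(x_1)$ by continuity of $f$, and $y^{(n)}_{i+1}=x^{(n)}_i$. Thus all conditions of Corollary~\ref{convergencia em W0}(a) for $y=f(x_1)x$, including the $\infty$-condition at coordinate $\ell(x)+2$, are shifted copies of those for $x$. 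Hence $y^{(n)}\to y$. Since $y$ has finite length and is a backward path, $y\in D_{\mathrm{lim}}$.

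\textbf{Part (ii).} If $x\in D_\infty$, its tail is clearly an infinite backward path, so it lies in $D_\infty$. If $x\in D_{\mathrm{lim}}$ has length $k\ge2$, take $x^{(n)}\to x$ in $D_\infty$ and pass to the tails $\sigma(x^{(n)})\in D_\infty$. Their coordinates are $x^{(n)}_{i+1}$, so the three bullet conditions for the tail, with length $k-1$, follow from those for $x$ by an index shift. Hence the tails converge to $x_2\cdots x_k$, which lies in $D_{\mathrm{lim}}$.

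If $k=1$, the tails $\sigma(x^{(n)})$ have first coordinate $x^{(n)}_2$, which tends to $\infty$ in $D_\infty$ by the third bullet. Convergence to $\vec 0$ requires only this, since the basic neighbourhoods of $\vec 0$ are the sets $C[K]$ by Proposition~\ref{lemabaseviz}. So $\sigma(x^{(n)})\to\vec 0$, and $\vec0\in D_{\mathrm{lim}}$.
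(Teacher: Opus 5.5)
Your argument is the same as the paper's. In both parts you apply the prefix or tail operation to an approximating sequence from $D_\infty$ and check convergence coordinatewise with Corollary~\ref{convergencia em W0}. Your part (ii), including the length-one case tending to $\vec 0$, is exactly the paper's proof.

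The obstacle you flag in part (i) is genuine. The paper's proof passes over it: it simply asserts that $f(x_1)x$ is again an infinite backward $f$--path. In fact (i) is false under the sole hypothesis $D\subseteq \Im(f)$, and the paper's own variable-length shift example is a counterexample. There $D=\{0,1\}^{\mathbb N}\setminus\{1^\infty\}$ and $f=\sigma$ on $Z(0)$. The letters $x_j:=0^j1^\infty\in Z(0)\subseteq D$ satisfy $f(x_{j+1})=x_j$, so $x=x_1x_2x_3\cdots\in D_\infty$. Yet $f(x_1)=1^\infty\notin D$, hence $f(x_1)x\notin D_0$ and so $f(x_1)x\notin\widetilde X$.

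So an extra hypothesis is unavoidable, and your conditional proof is the correct one. You can replace the global assumption $f(D)\subseteq D$ by the pointwise condition $f(x_1)\in D$, which is clearly necessary since the first letter of $f(x_1)x$ must lie in $D$. It is also sufficient. $D$ is open in $X$ and $f(x^{(n)}_1)\to f(x_1)$, so $f(x^{(n)}_1)\in D$ for all large $n$. Your argument then runs unchanged after discarding finitely many terms.
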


\begin{proof}
(i) If $x\in D_\infty$, then
$
f(x_1)x=f(x_1)x_1x_2x_3\cdots
$
is again an infinite backward $f$--path, hence belongs to $D_\infty$.

Suppose now that $x\in D_{\mathrm{lim}}$ and $\ell(x)\ge 1$. Choose
$(x^{(n)})_{n\in\mathbb N}\subseteq D_\infty$ with $x^{(n)}\to x$ in $D_0$, and write
\[
x^{(n)}=x^{(n)}_1x^{(n)}_2\cdots,\qquad x=x_1x_2\cdots.
\]
Define $y^{(n)}:=f(x^{(n)}_1)x^{(n)}_1x^{(n)}_2\cdots.$
Then $y^{(n)}\in D_\infty$ for every $n$.
By Corollary~\ref{convergencia em W0}, $x^{(n)}_1\to x_1$ in $D$, and hence $
f(x^{(n)}_1)\to f(x_1)$
by continuity of $f$. Applying again Corollary~\ref{convergencia em W0}, we obtain
$
y^{(n)}\to f(x_1)x$ in $D_0$.
Therefore $f(x_1)x\in D_{\mathrm{lim}}$.

(ii) If $x\in D_\infty$, then its tail is clearly still an infinite backward $f$--path, hence
belongs to $D_\infty$.

Suppose now that $x\in D_{\mathrm{lim}}$ and $\ell(x)\ge 2$. Choose
$(x^{(n)})_{n\in\mathbb N}\subseteq D_\infty$ with $x^{(n)}\to x$, and define
$
y^{(n)}:=x^{(n)}_2x^{(n)}_3\cdots\in D_\infty.$
By Corollary~\ref{convergencia em W0}, the sequence $(y^{(n)})_{n\in\N}$ converges in $D_0$ to the tail
$x_2x_3\cdots$. Hence $x_2x_3\cdots\in D_{\mathrm{lim}}$.

 Finally, suppose that $\ell(x)=1$, and let $(x^n)_{n\in \N}\subseteq D_\infty$ be such that $x^n\rightarrow x$. Write $x^n=x_1^nx_2^nx_3^n$ and $x=x_1$. Define $z^n=x_2^nx_3^n...$, notice that $(z^n)_{n\in\N}\subseteq D_\infty$, and from Corollary \ref{convergencia em W0} we get that $z^n\rightarrow \vec{0}$, so that $\vec{0}\in \widetilde{X}$.

\end{proof}

\begin{remark} Notice that it is possible that $\vec{0}\in \widetilde{X}$ even if $\widetilde{X}$ does not contain any element of finite strict positive length. As an example, let $X=\N$ and let $f:X\rightarrow X$ be defined by $f(x)=x$ for each $x\in X$. Notice that $D_\infty=\{(nnn...): n\in \N\}$. Let $x^n=nnn...$ and it is not hard to see that $D_{\lim}=\{\vec{0}\}$. Hence, $\widetilde{X}=\{(nnn...): n\in \N\}\cup \{\vec{0}\}.$

\end{remark}

We now define the shift maps on $\widetilde X$.

\begin{definition}\label{def:sigma-alpha}
Let $(X,f)$ be a Deaconu--Renault system such that $D:=\Dom(f)\subseteq \Im(f)$, and let
$\widetilde X$ be as in Definition~\ref{def:inverse-limit}.

We define $
\Dom(\sigma):=\{x\in \widetilde X:\ell(x)\ge 1\},$
and
$
\sigma:\Dom(\sigma)\longrightarrow \widetilde X$
by
\[
\sigma(x_1x_2x_3\cdots):=
\begin{cases}
\vec 0, & \text{if }\ell(x)=1,\\[4pt]
x_2x_3\cdots x_k, & \text{if }\ell(x)=k\ge 2,\\[4pt]
x_2x_3x_4\cdots, & \text{if }\ell(x)=\infty.
\end{cases}
\]
By Lemma~\ref{lem:prefix-tail}(ii) and the assumption $D\subseteq \Im(f)$, this map is well defined.

We set
\[
E:=\sigma(\Dom(\sigma))\subseteq \widetilde X,
\]
endowed with the subspace topology.

We also define
\[
\Dom(\widehat\sigma):=\{x\in \widetilde X:\ell(x)\ge 2\}\subseteq \Dom(\sigma),
\]
and
\[
\widehat\sigma:=\sigma|_{\Dom(\widehat\sigma)}:\Dom(\widehat\sigma)\to E\setminus\{\vec 0\}.
\]

For every $y=y_1y_2\cdots\in E\setminus\{\vec 0\}$ we define
\[
\alpha_f(y):=f(y_1)y_1y_2\cdots.
\]
By Lemma~\ref{lem:prefix-tail}(i), this map is well defined and takes values in $\widetilde X$.
\end{definition}

\begin{remark}\label{rem:sigma-hatsigma}
The choice $\Dom(\sigma)=\{x\in \widetilde X:\ell(x)\ge 1\}$
is important because it preserves the point $\vec 0$ in the image, which will be needed later.
However, $\sigma$ is not the appropriate inverse of $\alpha_f$ at words of length $1$.
For this reason we introduce the truncated shift
$
\widehat\sigma=\sigma|_{\{x\in \widetilde X:\ell(x)\ge 2\}},$ 
and it is $\widehat\sigma$, rather than $\sigma$, that is inverse to $\alpha_f$.
\end{remark}

\begin{proposition}\label{prop:shift-DR-correct}
Let $(X,f)$ be a Deaconu--Renault system such that $D:=\Dom(f)\subseteq \Im(f)$.
With the notation above, the following hold:
\begin{enumerate}[(i)]
\item
\[
\Dom(\sigma)=
\begin{cases}
\widetilde X\setminus\{\vec 0\}, & \text{if }\vec 0\in \widetilde X,\\[4pt]
\widetilde X, & \text{if }\vec 0\notin \widetilde X.
\end{cases}
\]
In particular, $\Dom(\sigma)$ is an open locally compact subset of $\widetilde X$.

\item
Every element of $\widetilde X$ of positive length belongs to $E$. Hence
\[
E=\widetilde X
\quad\text{or}\quad
E=\widetilde X\setminus\{\vec 0\}.
\]
Moreover, $
\vec 0\in E$ iff $
\widetilde X $  contains a word of length $1$.

In all cases, $E$ is locally compact (compact if $\vec 0\in E$)  and $E\setminus\{\vec 0\}$ is open in $E$.

\item
The maps
$
\widehat\sigma:\Dom(\widehat\sigma)\longrightarrow E\setminus\{\vec 0\}$ and $
\alpha_f:E\setminus\{\vec 0\}\longrightarrow \Dom(\widehat\sigma)
$
are mutually inverse homeomorphisms.

\item
$(E,\alpha_f)$ is a Deaconu--Renault system.
More precisely, $\alpha_f$ is a homeomorphism from the open subset
$\Dom(\alpha_f)=E\setminus\{\vec 0\}\subseteq E$ onto the open subset
\[
\Im(\alpha_f)=\Dom(\widehat\sigma)=\{x\in \widetilde X:\ell(x)\ge 2\}\subseteq E.
\]
\end{enumerate}
\end{proposition}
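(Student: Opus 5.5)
I will prove the four items in order, relying throughout on Corollary~\ref{convergencia em W0} for convergence and on Lemma~\ref{lem:prefix-tail} for closure of $\widetilde X$ under tails and prefixing.

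For (i), $\Dom(\sigma)$ is by definition $\widetilde X$ minus the unique length-zero word. Hence it equals $\widetilde X\setminus\{\vec 0\}$ or $\widetilde X$. It is open because $\{x\in D_0:\ell(x)\ge1\}=\bigcup_{B\in\mathcal B}Z[B,\emptyset]$ is open in $D_0$; equivalently, $\{\vec 0\}$ is closed. It is locally compact as an open subset of the compact Hausdorff space $\widetilde X$ (Corollary~\ref{cor:Xtilde-compact}).

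For (ii), take $x\in\widetilde X$ with $\ell(x)\ge1$. By Lemma~\ref{lem:prefix-tail}(i), $f(x_1)x\in\widetilde X$; this uses $x_1\in D\subseteq \Im(f)$ only to make sense of $f(x_1)$, and we need $f(x_1)\in D$ for the word to lie in $D_0$, which the lemma provides. Then $\sigma(f(x_1)x)=x$, so $x\in E$. The only possible element outside $E$ is $\vec 0$. Now $\vec 0=\sigma(x)$ exactly when $\ell(x)=1$, which gives the criterion: $\vec 0\in E$ iff $\widetilde X$ contains a word of length one. Local compactness follows: $E$ is either the compact $\widetilde X$ or the open subset $\widetilde X\setminus\{\vec 0\}$. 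Also $E\setminus\{\vec 0\}$ is open, as in (i).

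For (iii), I first check set-theoretically that the maps are mutually inverse. For $y\in E\setminus\{\vec0\}$, $\alpha_f(y)$ has length $\ell(y)+1\ge 2$ and lies in $\widetilde X$, and $\widehat\sigma(\alpha_f(y))=y$. Conversely, for $x$ with $\ell(x)\ge2$, the backward path condition gives $f(x_2)=x_1$, hence $\alpha_f(\widehat\sigma(x))=f(x_2)x_2x_3\cdots=x$. Continuity will be proved by sequences, since everything is metrizable (Corollary~\ref{w0 eh metrico}), using Corollary~\ref{convergencia em W0}.
\begin{itemize}
\item If $x^n\to x$ with $\ell(x)\ge2$, the conditions on lengths and coordinates of $x^n$ shift down by one index, so $\sigma(x^n)\to\sigma(x)$. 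The case $\ell(x)=\infty$ is immediate from part~(b).
\item If $y^n\to y$ with $y\ne\vec 0$, then $y^n_1\to y_1$, so $f(y^n_1)\to f(y_1)$ by continuity of $f$. The conditions for $\alpha_f(y^n)\to\alpha_f(y)$ are then the same conditions shifted up by one index.
\end{itemize}
The main care point is bookkeeping in the finite-length case. There the escape-to-$\infty$ condition sits at coordinate $k+1$ and must become coordinate $k$ (resp.\ $k+2$), and the ``eventually $\ell\ge k$'' clause must be tracked. For $\widehat\sigma$ this needs $\ell(x)\ge 2$, so that $\ell(x^n)\ge2$ eventually and $\widehat\sigma(x^n)$ is defined.

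For (iv), I combine the above. $\Dom(\alpha_f)=E\setminus\{\vec0\}$ is open in $E$ by (ii). The image $\Dom(\widehat\sigma)=\{x\in\widetilde X:\ell(x)\ge2\}$ is contained in $E$ by (ii). It is open because $\{\ell\ge2\}=\bigcup Z[B_1\times B_2,\emptyset]$ is open in $D_0$. A homeomorphism between open sets is in particular a local homeomorphism, and $E$ is locally compact Hausdorff, so $(E,\alpha_f)$ is a Deaconu--Renault system.
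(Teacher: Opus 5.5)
Your argument follows the paper's proof essentially line by line. Part (i) uses closedness of $\{\vec 0\}$, part (ii) uses Lemma~\ref{lem:prefix-tail}(i) together with $\sigma(f(x_1)x)=x$, part (iii) uses the set-theoretic inverse check plus sequential continuity via Corollary~\ref{convergencia em W0}, and part (iv) assembles these. However, the step you flagged yourself (``we need $f(x_1)\in D$ \dots\ which the lemma provides'') is a genuine gap, and the paper has the same gap.

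First, $f(x_1)$ is defined simply because $x_1\in D=\Dom(f)$. The hypothesis $D\subseteq\Im(f)$ plays no role there; it only says that points of $D$ have $f$-preimages, which is what lets one extend finite backward paths. What you actually need is the reverse inclusion $f(D)\subseteq D$. The proof of Lemma~\ref{lem:prefix-tail}(i) (``$f(x_1)x$ is again an infinite backward $f$-path'') tacitly assumes it.

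Without it the claim is false, already in the paper's own variable-length shift example. There $D=\{0,1\}^{\N}\setminus\{1^\infty\}$ and $f=\sigma^{n+1}$ on $Z(1^n0)$, so $D\subseteq\Im(f)$ holds. Take $x=(01^\infty,\,001^\infty,\,0001^\infty,\dots)$.
\begin{itemize}
\item $x$ lies in $D_\infty\subseteq\widetilde X$, but $f(x_1)=1^\infty\notin D$. So no $z\in\widetilde X$ satisfies $\sigma(z)=x$, and $x\in\widetilde X\setminus E$ although $\ell(x)=\infty$. This contradicts the first claim of (ii).
\item Put $y:=\sigma(x)$. Then $y\in E\setminus\{\vec 0\}$, while $\alpha_f(y)=x\notin E$. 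Hence $\Dom(\widehat\sigma)\not\subseteq E$, and (iv) fails as stated.
\item Your derivation of local compactness of $E$ from $E\in\{\widetilde X,\widetilde X\setminus\{\vec 0\}\}$ also loses its justification. In this example, where $\vec 0\in E$, one checks that $E$ is not locally compact at $\vec 0$.
\end{itemize}

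Some parts survive without the lemma:
\begin{itemize}
\item the description of $\Dom(\sigma)$ in (i);
\item the criterion ``$\vec 0\in E$ iff $\widetilde X$ contains a word of length $1$'';
\item all of (iii), provided you justify $\alpha_f(y)\in\Dom(\widehat\sigma)$ directly.
\end{itemize}
For that last point: if $y\in E\setminus\{\vec 0\}$, then $y=\sigma(z)$ with $\ell(z)\ge 2$, and $z_1=f(z_2)=f(y_1)$ forces $z=\alpha_f(y)$. Your inverse and continuity arguments then go through verbatim.

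To obtain the full proposition you must add a hypothesis such as $f(D)\subseteq D$. Under it, Lemma~\ref{lem:prefix-tail}(i) is correct, and your proof, like the paper's, works exactly as written.
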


\begin{proof}
\textit{(i)} By definition,
\[
\Dom(\sigma)=\{x\in \widetilde X:\ell(x)\ge 1\}.
\]
Thus, if $\vec 0\in \widetilde X$, then $\Dom(\sigma)=\widetilde X\setminus\{\vec 0\}$; otherwise
$\Dom(\sigma)=\widetilde X$.

Since $\widetilde X$ is compact Hausdorff, the singleton $\{\vec 0\}$ is closed in $\widetilde X$.
Hence $\Dom(\sigma)$ is open in $\widetilde X$, and therefore locally compact.

\smallskip
\textit{(ii)} Let $x\in \widetilde X$ with $\ell(x)\ge 1$, say $x=x_1x_2\cdots$.
By Lemma~\ref{lem:prefix-tail}(i),
$
z:=f(x_1)x_1x_2\cdots\in \widetilde X.$ 
Moreover, $\ell(z)\ge 2$, so $z\in \Dom(\sigma)$ and
$
\sigma(z)=x.$
Thus every element of $\widetilde X$ of positive length belongs to $E$.

Therefore $E=\widetilde X$ or $
E=\widetilde X\setminus\{\vec 0\}.$ Also, by definition of $\sigma$,
\[
\vec 0\in E
\ \Longleftrightarrow\
\text{there exists }x\in \Dom(\sigma)\text{ with }\ell(x)=1
\ \Longleftrightarrow\ 
\widetilde X \text{ contains a word of length }1.
\]
If $\vec 0\in E$, then the previous paragraph gives $E=\widetilde X$, so $E$ is closed in $\widetilde X$.
In any case, $E$ is either $\widetilde X$ or $\widetilde X\setminus\{\vec 0\}$, hence locally compact.
Moreover, $E\setminus\{\vec 0\}$ is open in $E$.

\smallskip
\textit{(iii)} Let $y\in E\setminus\{\vec 0\}$. Then $\ell(y)\ge 1$, so $\alpha_f(y)$ is defined, and
$
\ell(\alpha_f(y))\ge 2.
$
Hence $\alpha_f(y)\in \Dom(\widehat\sigma)$ and
$
\widehat\sigma(\alpha_f(y))
=\sigma(f(y_1)y_1y_2\cdots)
=y.
$

Now let $x=x_1x_2x_3\cdots\in \Dom(\widehat\sigma)$, so $\ell(x)\ge 2$.
Then
\[
\alpha_f(\widehat\sigma(x))
=\alpha_f(x_2x_3\cdots)
=f(x_2)x_2x_3\cdots.
\]
Since $x$ is a backward $f$--path, we have $f(x_2)=x_1$, so
$
\alpha_f(\widehat\sigma(x))=x_1x_2x_3\cdots=x.
$

It remains to prove continuity.

Let $(x^{(n)})_{n\in \N}\subseteq \Dom(\widehat\sigma)$ with $x^{(n)}\to x\in \Dom(\widehat\sigma)$ in $\widetilde X$.
Write
\[
x^{(n)}=x^{(n)}_1x^{(n)}_2\cdots,
\qquad
x=x_1x_2\cdots.
\]
By Corollary~\ref{convergencia em W0}, for each $i\leq \ell(x)$ we have
$
x^{(n)}_{i+1}\to x_{i+1}.$
Hence again by Corollary~\ref{convergencia em W0},
$\widehat\sigma(x^{(n)})\to \widehat\sigma(x)$ in $D_0,
$
and therefore in $\widetilde X$. Thus, $\widehat\sigma$ is continuous.

Now, let $(y^{(n)})_{n\in \N}\subseteq E\setminus\{\vec 0\}$ with $y^{(n)}\to y\in E\setminus\{\vec 0\}$.
Write
\[
y^{(n)}=y^{(n)}_1y^{(n)}_2\cdots,
\qquad
y=y_1y_2\cdots.
\]
By Corollary~\ref{convergencia em W0},
$
y^{(n)}_1\to y_1
$ in $D,
$
and hence, by continuity of $f$,
$
f(y^{(n)}_1)\to f(y_1).
$
Applying Corollary~\ref{convergencia em W0} once more, we obtain
$
\alpha_f(y^{(n)})\to \alpha_f(y)
$ in $D_0,
$
hence in $\widetilde X$. Therefore $\alpha_f$ is continuous.

Thus $\widehat\sigma$ and $\alpha_f$ are mutually inverse homeomorphisms.

\smallskip
\textit{(iv)} By \textit{(ii)}, 
$
\Dom(\alpha_f)=E\setminus\{\vec 0\}
$
is open in $E$.
By \textit{(iii)}, its image is
\[
\Im(\alpha_f)=\Dom(\widehat\sigma)=\{x\in \widetilde X:\ell(x)\ge 2\}.
\]
Since the set of words of length at most $1$ is closed in $\widetilde X$, it follows that
$\Dom(\widehat\sigma)$ is open in $\widetilde X$, hence also open in $E$.
Finally, by \textit{(iii)}, $\alpha_f$ is a homeomorphism from $\Dom(\alpha_f)$ onto $\Im(\alpha_f)$.
Therefore $(E,\alpha_f)$ is a Deaconu--Renault system.
\end{proof}

\section{Shadowing via defining sequences}

In this section, we show that in ultrametric spaces with a tame defining sequence, one can formulate shadowing purely in terms of the partitions in the defining sequence, without restricting attention to compact subsets. 
The defining sequence provides a global, countable structure that replaces compactness in the classical metric setting (see \cite{DGS, GoodMeddaugh2020}).

Recall that a partition $\mathcal U$ of a space $X$ is a countable family of pairwise disjoint, nonempty, clopen sets whose union is $X$. For $x \in X$, we denote by $\mathcal U[x]$ the carrier of $x$, that is, the unique element of $\mathcal U$ which contains $x$. Below we recall the definition of tame defining sequences, as in \cite{DGS}.

\begin{definition}
    \label{nintendoDS}
A defining sequence of a space $X$ is a sequence $\mathcal A = \{\m{U}{n}\}_{n \in \N}$ of partitions of $X$ such that:
\begin{enumerate}
    \item $\m{U}{n+1}$ refines $\m{U}{n}$, i.e., each element of $\m{U}{n+1}$ is contained in a (necessarily unique) element of $\m{U}{n}$.
    \item The family $\{U : U \in \m{U}{n} \text{ for some } n \in \N\}$ is a basis for the topology of $X$.
\end{enumerate}
The defining sequence $\mathcal A$ is called complete if, whenever $U_n \in \m{U}{n}$ with $U_{n+1} \subseteq U_n$ for all $n \in \N$, one has $\bigcap\limits_{n\in \N} U_n \neq \emptyset$.
\end{definition}

\begin{definition}\label{tameDS}
Let $(X,d)$ be a metric space and let $\{\m{U}{n}\}_{n \in \N}$ be a defining sequence of $X$. For each $n \in \N$, set
\[
S_n = \sup\{\mathrm{diam}(O) : O \in \m{U}{n}\},
\]
where $\mathrm{diam}(O)$ denotes the diameter of $O$ with respect to $d$.
We say that $\{\m{U}{n}\}_{n \in \N}$ is a tame defining sequence of $X$ (with respect to $d$) if $S_n \to 0$ as $n \to \infty$ and, for each $n \in \N$, there exists $\rho_n > 0$ such that whenever $O_1, O_2$ are distinct elements of $\m{U}{n}$ and $x_i \in O_i$, then
\[
d(x_1,x_2) \ge \rho_n.
\]
In this case we say that $\m{U}{n}$ is $\rho_n$-separated. 
\end{definition}

\begin{example}
    Let $X=\{m+\frac{1}{n}: m,n\in \N \text{ with } n\geq 2\}\cup \N \subseteq \R$ with the usual metric. Fix an $n\in \N$, and define, for each $m\in \N$, $V_m=\{m+\frac{1}{k}:k\geq n\}\cup \{m\}$, and let $$\mathcal{U}_n=\{V_m:m\in\N\}\cup \{\{m+\frac{1}{p}\}: m\in \N \text{ and }1\leq p<n\}.$$
Notice that each $\mathcal{U}_n$ is a clopen partition of $X$, that $\sup\{ \operatorname{diam}(V): V\in \mathcal{U}_n \}=\frac{1}{n}$ and that $d(U,V)\geq\frac{1}{n-1}-\frac{1}{n}=\frac{1}{(n-1)n}$ for each $U,V\in \mathcal{U}_n$. Moreover, $\mathcal{U}_{n+1}$ refines $\mathcal{U}_n$ and so $\{\mathcal{U}_n\}_{n\in\N}$ is a complete tame defining sequence.

\end{example}

Next, we recall the definition of shadowing in Deaconu--Renault systems, as defined in \cite{GU}.

\begin{definition}\label{definicial} Let $(X,f)$ be a Deaconu--Renault system, with $X$ a metric space.  Given $\delta>0$, a (finite) \textit{$\delta$}-pseudo-orbit in $(X,f)$ is a (finite) sequence $(x_n)$ such that $d(f(x_n),x_{n+1})< \delta$ for all $n$, where it is implicit that $x_n\in \Dom(f)$ when $f(x_n)$ is written. 

We say that a point $x\in X$ \textit{$\varepsilon-$shadows} a (finite) sequence $(y_n) $ in $X$ if $d(f^n(x), y_n)<\varepsilon$ for all $n$. Notice that this implies that $x\in \Dom(f^n)$ for all $n$.
\end{definition}

In \cite{GoodMeddaugh2020}, shadowing on compact spaces is shown to be a topological property, independent of the particular compatible metric. Below we extend this result to spaces with tame defining sequences.

\begin{definition}\label{def:U-shadowing}
Let $(X,f)$ be a Deaconu--Renault system. Let $\mathcal{U}$ be a partition of $\Dom(f)$
into clopen sets. 
\begin{itemize}
\item A sequence $(x_n)_{n\in\N} \subseteq \Dom(f)$ is called a 
$\mathcal{U}$-pseudo-orbit if for every $n$ there exists 
$U_n \in \mathcal{U}$ such that $\{f(x_n), x_{n+1}\} \subseteq U_n$.

\item A point $z \in \Dom(f)$ is said to $\mathcal{U}$-shadow the sequence $(x_n)_{n\in \N}$ if
for every $n$ there exists $W_n \in \mathcal{U}$ such that 
$\{f^n(z), x_n\} \subseteq W_n$.
\end{itemize}
\end{definition}

The following generalizes \cite[Lemma~6]{GoodMeddaugh2020}.

\begin{proposition}[Shadowing characterization]\label{prop:shadowing-char}
Let $(X,f)$ be a Deaconu--Renault system such that $\Dom(f)$ admits a 
tame defining sequence $\{\mathcal{U}_n\}_{n\in\mathbb{N}}$.  
Then the following statements are equivalent:

\begin{enumerate}
\item[(M)] Metric shadowing:
For every $\varepsilon>0$ there exists $\delta>0$ such that every 
$\delta$-pseudo-orbit in $\Dom(f)$ is $\varepsilon$-shadowed by some $z\in\Dom(f)$.

\item[(T)] Shadowing via the defining sequence:
For every $n\in\mathbb{N}$ there exists $m\ge n$ such that every 
$\mathcal{U}_m$-pseudo-orbit is $\mathcal{U}_n$-shadowed by some $z\in\Dom(f)$.
\end{enumerate}
\end{proposition}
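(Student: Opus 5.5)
We prove the two implications separately. The only tools needed are the two quantitative features of a tame defining sequence: $S_n\to 0$, and each $\mathcal U_n$ being $\rho_n$-separated. Throughout, distances are taken in the metric $d$ of $X$ restricted to $\Dom(f)$. Pseudo-orbits and shadowing both take place inside $\Dom(f)$, where the partitions live. So we need $f(x_n)\in \Dom(f)$ whenever $f(x_n)$ and $x_{n+1}$ are compared. In (T) this is built into the definition of a $\mathcal U$-pseudo-orbit, since $f(x_n)\in U_n\subseteq \Dom(f)$. In (M) we interpret pseudo-orbits in $\Dom(f)$ in the same way.

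\textbf{(M)$\Rightarrow$(T).} Fix $n$ and set $\varepsilon:=\rho_n$. Let $\delta>0$ be given by (M) for this $\varepsilon$. Since $S_m\to0$, choose $m\ge n$ with $S_m<\delta$.

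Let $(x_k)$ be a $\mathcal U_m$-pseudo-orbit. Then $f(x_k)$ and $x_{k+1}$ lie in a common element of $\mathcal U_m$, so $d(f(x_k),x_{k+1})\le S_m<\delta$. Hence $(x_k)$ is a $\delta$-pseudo-orbit, and (M) gives $z\in\Dom(f)$ with $d(f^k(z),x_k)<\rho_n$ for all $k$.

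By $\rho_n$-separation, two points at distance less than $\rho_n$ cannot lie in distinct elements of $\mathcal U_n$. Therefore $f^k(z)$ and $x_k$ share a carrier in $\mathcal U_n$ for every $k$, so $z$ $\mathcal U_n$-shadows $(x_k)$.

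\textbf{(T)$\Rightarrow$(M).} Fix $\varepsilon>0$ and choose $n$ with $S_n<\varepsilon$. Let $m\ge n$ be given by (T), and set $\delta:=\rho_m$.

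Let $(x_k)$ be a $\delta$-pseudo-orbit in $\Dom(f)$. For each $k$ we have $d(f(x_k),x_{k+1})<\rho_m$, so by separation $f(x_k)$ and $x_{k+1}$ lie in the same element of $\mathcal U_m$. Thus $(x_k)$ is a $\mathcal U_m$-pseudo-orbit, and (T) gives $z$ that $\mathcal U_n$-shadows it. Then for every $k$, $f^k(z)$ and $x_k$ lie in a common element of $\mathcal U_n$, so
\[
d(f^k(z),x_k)\le S_n<\varepsilon .
\]
This is $\varepsilon$-shadowing.

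The finite-pseudo-orbit versions follow by the identical argument if needed.

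The only genuinely delicate point is the bookkeeping of domains: that $f(x_k)$ lies in $\Dom(f)$, so that it has a carrier in $\mathcal U_m$. In the transfer above this is inherited from the pseudo-orbit definitions. The $\le$ versus $<$ issue is harmless, because $S_m<\delta$ and $S_n<\varepsilon$ are chosen strictly. No compactness is used: separation replaces the Lebesgue-number argument of \cite{GoodMeddaugh2020}.
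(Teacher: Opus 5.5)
Your proof is correct and follows essentially the same argument as the paper: in both directions you pass between metric closeness and sharing a carrier using $S_m\to 0$ and the $\rho_m$-separation of $\mathcal U_m$. Your deviations are harmless. Taking $\varepsilon=\rho_n$ instead of the paper's $\rho_n/2$ works. Choosing $S_n<\varepsilon$ strictly actually tightens the paper's $S_n\le\varepsilon$, which only yields $\le\varepsilon$. Making explicit that (M) is read for pseudo-orbits with $f(x_k)\in\Dom(f)$ states a convention the paper uses tacitly.
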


\begin{proof}
(M $\Rightarrow$ T): Fix $n$ and set $\varepsilon := \rho_n/2$.  
By (M) there exists $\delta>0$ such that every $\delta$-pseudo-orbit 
is $\varepsilon$-shadowed.  
Choose $m>n$ with $S_m<\delta$.  
Then every $\mathcal{U}_m$-pseudo-orbit is a $\delta$-pseudo-orbit, 
hence $\varepsilon$-shadowed by some $z$.  
Since distinct elements of $\mathcal{U}_n$ are $\rho_n$-separated,  
$d(f^i(z),x_i)<\rho_n/2$ implies $f^i(z)$ and $x_i$ lie in the same atom of 
$\mathcal{U}_n$.  
Thus the pseudo-orbit is $\mathcal{U}_n$-shadowed.

(T $\Rightarrow$ M): Let $\varepsilon>0$.  
Choose $n$ such that $S_n\le\varepsilon$.  
By (T) there exists $m\ge n$ such that every $\mathcal{U}_m$-pseudo-orbit is 
$\mathcal{U}_n$-shadowed.  
Let $\delta:=\rho_m$.  
If $(x_i)$ is a $\delta$-pseudo-orbit, then 
$d(f(x_i),x_{i+1})<\rho_m$
forces $f(x_i),x_{i+1}$ to lie in the same atom of $\mathcal{U}_m$,  
so $(x_i)$ is a $\mathcal{U}_m$-pseudo-orbit.  
By assumption it is $\mathcal{U}_n$-shadowed by some $z$, hence
$d(f^i(z),x_i)\le S_n\le\varepsilon$ for all $i$, proving (M).
\end{proof}

We finish this section recording how to build standard defining sequences in an ultrametric space.

\begin{proposition}\label{prop:ball-tame-defining}
Let $(X,d)$ be an ultrametric space, let $D\subseteq X$ be a nonempty open subset, and let
$(a_n)_{n\ge 1}\subseteq (0,\infty)$ be a decreasing sequence such that $a_n\to 0$.
For each $n\ge 1$, define
\[
\mathcal U_n:=\{B_D(x,a_n):x\in D\};
\]
After removing repetitions, each $\mathcal U_n$ is a countable clopen partition of $D$.
Moreover, if we set $\mathcal U_0:=\{D\},$
then $\mathcal A:=(\mathcal U_n)_{n\ge 0}$ is a tame defining sequence for $D$.

If, in addition, $D$ is complete, then $\mathcal A$ is a complete tame defining sequence. In particular, this applies to the spaces $D_0$ and $\widetilde X$ of Section~\ref{tornado}, since both are compact ultrametric spaces.
\end{proposition}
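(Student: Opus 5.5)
We use the basic properties of ultrametric balls (Lemma~\ref{ultramentric prop}), applied inside the subspace $(D,d)$, which is itself ultrametric. Here $B_D(x,r)=B(x,r)\cap D$.

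\textbf{Step 1: each $\mathcal U_n$ is a clopen partition.} By Lemma~\ref{ultramentric prop}(1)--(2), two balls $B_D(x,a_n)$ and $B_D(y,a_n)$ are either disjoint or equal. Every $x\in D$ lies in $B_D(x,a_n)$, so after removing repetitions $\mathcal U_n$ partitions $D$ into nonempty sets. Each open ball is open. Each ball is also closed: its complement in $D$ is the union of the other balls, which are open. Countability is the one point needing input beyond the ultrametric axioms. I would argue it from separability of $D$ (for instance $D\subseteq D_0$ is second countable, and in general a metric space admitting a countable basis is separable). A countable dense set meets every open ball, and each ball in $\mathcal U_n$ is a ball around each of its points, so the distinct balls are indexed by a subset of the dense set and $\mathcal U_n$ is countable. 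If $X$ is not assumed separable, I would add that hypothesis, since all cases of interest are compact metric.

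\textbf{Step 2: refinement.} Since $a_{n+1}\le a_n$, each $B_D(x,a_{n+1})$ is contained in $B_D(x,a_n)\in\mathcal U_n$. Trivially $\mathcal U_1$ refines $\mathcal U_0=\{D\}$.

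\textbf{Step 3: basis.} Let $V\subseteq D$ be open and $x\in V$. Some $r>0$ has $B_D(x,r)\subseteq V$. Since $a_n\to0$, pick $n$ with $a_n<r$; then $x\in B_D(x,a_n)\subseteq V$.

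\textbf{Step 4: tameness.} By the ultrametric inequality, $\diam B_D(x,a_n)\le a_n$, so $S_n\le a_n\to0$ (and $S_0=\diam D$, harmless, or we simply index the tame condition for $n\ge1$). For separation take $\rho_n=a_n$: if $O_1\ne O_2$ in $\mathcal U_n$ and $x_i\in O_i$, then $x_2\notin B(x_1,a_n)$, so $d(x_1,x_2)\ge a_n$ by Lemma~\ref{ultramentric prop}(3). For $\mathcal U_0$ separation is vacuous; take $\rho_0=1$.

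\textbf{Step 5: completeness.} Take a nested sequence $U_n=B_D(x_n,a_n)$. For $m\ge n$ we have $x_m\in U_m\subseteq U_n$, so $d(x_m,x_n)<a_n$, and hence $(x_n)$ is Cauchy. If $D$ is complete, $x_n\to x\in D$. For fixed $n$, $x_m\in U_n$ for all $m\ge n$, and $U_n$ is closed, so $x\in U_n$. Thus $\bigcap_n U_n\neq\emptyset$.

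\textbf{Step 6: the spaces of Section~\ref{tornado}.} $D_0$ is compact and ultrametric (Corollary~\ref{w0 eh metrico}, Proposition~\ref{W_0 is ultrametric}), and $\widetilde X$ is compact and ultrametric (Corollary~\ref{cor:Xtilde-compact}, Remark~\ref{remarkXtildeultramet}). Compact metric spaces are complete and separable, so we apply the above with $D=X$.

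\textbf{Main obstacle.} The only delicate step is countability in Step~1, which needs separability rather than just the ultrametric axioms.
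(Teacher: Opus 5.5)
Your proof is correct, but it takes a different route from the paper's. The paper gives no direct argument. It notes that for $a_n=1/n$ the claim is exactly \cite[Proposition~3.1.11]{DGS}, asserts that the same proof works verbatim for any decreasing $a_n\downarrow 0$, and obtains completeness from the arguments of \cite[Propositions~3.1.6 and 3.1.11]{DGS}.

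You instead verify every property directly from Lemma~\ref{ultramentric prop}:
\begin{itemize}
\item balls of equal radius are disjoint or equal, which gives the partition and clopenness;
\item $\diam B_D(x,a_n)\le a_n$ gives $S_n\to 0$;
\item part (3) of the lemma gives the separation constant $\rho_n=a_n$;
\item completeness comes from the Cauchy sequence of points chosen in a nested chain, together with closedness of the atoms.
\end{itemize}
This makes clear which hypothesis is used where; for instance, openness of $D$ in $X$ is never needed. Your completeness argument is also more direct than passing through the metric attached to a defining sequence, as in \cite[Proposition~3.1.6]{DGS}.

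Your remark about countability is well taken and is something the citation route leaves implicit. Partitions in this paper are required to be countable, so the statement as written tacitly needs $D$ to be separable, which holds whenever $X$ is. Without it the claim fails: an uncountable set with $d(x,y)=1$ for $x\neq y$ is ultrametric, and for $a_n<1$ its $\mathcal U_n$ consists of uncountably many singletons. The omission is harmless for the applications, since $D_0$ and $\widetilde X$ are compact metric spaces and hence separable and complete, exactly as you observe in your last step.
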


\begin{proof}
For the special choice $a_n=1/n$, this is exactly \cite[Proposition~3.1.11]{DGS}. The reader can verify that the same proof works verbatim for an arbitrary decreasing sequence $a_n\downarrow 0$.

If $D$ is complete, then the same completeness argument as in
\cite[Propositions~3.1.6 and 3.1.11]{DGS} shows that $\mathcal A$ is complete. The final assertion follows because $D_0$ and $\widetilde X$ are compact ultrametric spaces.
\end{proof}

\section{Shadowing via inverse limits for ultrametric Deaconu--Renault systems}\label{shadowthis}

The goal of this section is to relate shadowing for a Deaconu--Renault system to shadowing for its associated inverse-limit system. More precisely, under suitable hypotheses, we show that shadowing for the compactified base system is equivalent to shadowing for the inverse-limit system. The main additional assumption is a uniform expansiveness condition, formulated in terms of contractions of the local inverse branches.

We begin by introducing the first $L-$coordinate projection from the inverse limit $\widetilde{X}$ of Section~\ref{tornado} to the one-point compactification $D^\infty$. We then define the separation property mentioned above and establish the continuity of the projection. These ingredients will be used in the proof of the main theorem of the section.

Let $(X,f)$ be a DR-system with domain $D$ as in Section~\ref{tornado}. Recall the space $D_0$ and the inverse limit definition of $\widetilde{X}$.
Let $D^\infty$ denote the one-point compactification of $D$, that is,
$D^\infty = D \cup \{\infty\}$.
For each $x=(x_1,x_2,...)\in \widetilde{X}$, and for each $n \in \N$ define  
\[
\pi_n(x) :=
\begin{cases}
x_n, & \text{if } \ell(x)\geq n\\
\infty, & \text{if } \ell(x)<n.
\end{cases}
\]

Fix an $L\in \N$, and define $\pi_L^\infty:\widetilde{X}\rightarrow (D^\infty)^L$  by $\pi_L^\infty(x)=(\pi_1(x), \pi_2(x),...,\pi_L(x))$.

\begin{lemma}\label{lem:pi1-continuous}
The map $\pi_L^\infty : \widetilde{X} \to (D^\infty)^L$ defined above is continuous (with the product topology in $(D^\infty)^L$).
\end{lemma}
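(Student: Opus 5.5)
Since $(D^\infty)^L$ carries the product topology, it suffices to show that each coordinate map $\pi_n:\widetilde X\to D^\infty$, $1\le n\le L$, is continuous. Moreover, $\widetilde X$ is a metric space by Remark~\ref{remarkXtildeultramet}, so sequential continuity is enough. The plan is therefore as follows: fix $n$, take a sequence $x^{(k)}\to x$ in $\widetilde X$ (equivalently in $D_0$), and show that $\pi_n(x^{(k)})\to\pi_n(x)$ in $D^\infty$ using the convergence criterion of Corollary~\ref{convergencia em W0}. The argument splits according to the length of the limit $x$.

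\textbf{Case $\ell(x)\ge n$} (this includes $\ell(x)=\infty$). By Corollary~\ref{convergencia em W0}, part (a) or (b), $\ell(x^{(k)})\ge n$ for all large $k$. Also the $n$-th coordinates $(x^{(k)}_n)_{k:\ell(x^{(k)})\ge n}$ converge to $x_n$ in $D$. Hence, eventually, $\pi_n(x^{(k)})=x^{(k)}_n\to x_n=\pi_n(x)$ in $D$, and therefore also in $D^\infty$, since $D$ is an open subspace of $D^\infty$.

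\textbf{Case $\ell(x)=m<n$.} Here $\pi_n(x)=\infty$, and we must show that for every compact $K\subseteq D$, eventually $\pi_n(x^{(k)})\notin K$. The terms with $\ell(x^{(k)})<n$ have $\pi_n(x^{(k)})=\infty$ and cause no trouble. If only finitely many $k$ satisfy $\ell(x^{(k)})\ge n$, we are done. Otherwise, those $k$ also satisfy $\ell(x^{(k)})\ge m+1$, so by the third bullet of Corollary~\ref{convergencia em W0}(a), $x^{(k)}_{m+1}\to\infty$ in $D^\infty$ along them.

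The main obstacle is to pass from $x^{(k)}_{m+1}\to\infty$ to $x^{(k)}_n\to\infty$ for $n>m+1$. Corollary~\ref{convergencia em W0} only controls coordinate $m+1$, so here I would use the backward-path structure of $\widetilde X$. For those $k$ we have $f^{\,n-m-1}(x^{(k)}_n)=x^{(k)}_{m+1}$. Suppose, toward a contradiction, that a subsequence of $x^{(k)}_n$ stays in a compact $K\subseteq D$. Then I would like $f^{\,n-m-1}(K)$ to be compact and contained in $D$, which contradicts $x^{(k)}_{m+1}\to\infty$. This needs $K\subseteq\Dom(f^{\,n-m-1})$. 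That inclusion holds for the subsequence points themselves, because they are coordinates of backward $f$--paths; but I would apply it carefully by using $K_j:=f^{\,j}(K)\cap D$ step by step. Alternatively, I would argue via continuity of $f$ on compact subsets of $D$ together with the fact that $f(K\cap D)$ is compact in $\Im(f)$. The delicate point is whether $f(K)$ stays in a compact subset of $D$ rather than merely of $X$. If this fails in general, the fallback is to note that this may be exactly why the paper uses $D^\infty$ and the standing hypothesis $D\subseteq\Im(f)$. In that case I would check whether the cluster point $f^{\,n-m-1}(y)$, with $y$ a limit point of $x^{(k)}_n$ in $K$, lies in $D$ by the closedness of backward paths (Proposition~\ref{prop:closure-Dinfty}). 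I expect that step to be where the real work lies.
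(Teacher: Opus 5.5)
\textbf{There is a genuine gap here, and it cannot be closed: the statement is false for every $L\ge 2$.}

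Your reduction to coordinate maps is fine, and so is your case $\ell(x)\ge n$. For $n=m+1$ the third bullet of Corollary~\ref{convergencia em W0}(a) finishes the job, so your argument does prove the case $L=1$. For $n>m+1$, however, you never show that $x^{(k)}_{m+1}\to\infty$ forces $x^{(k)}_n\to\infty$, and that implication is false.

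The paper's proof makes exactly the inference you were hesitant about. From $\pi_{j+1}(y^{n_p})\in K$, with $K\subseteq D$ compact, it concludes that the $\pi_j(y^{n_p})$ lie in the compact set $f(K)$, contradicting $\pi_j(y^n)\to\infty$. But $f(K)$ is compact in $X$ (or $D^\infty$), not in $D$. It can contain points of $X\setminus D$, so $f(K)\cap D$ need not be relatively compact in $D$, and points of $D$ inside $f(K)$ can still escape to $\infty$ in $D^\infty$. Your repairs do not help. The sets $f^{\,j}(K)\cap D$ are in general not compact. The cluster point you propose to examine need not map into $D$: in the example below, $f(01^\infty)=1^\infty\notin D$. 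The standing hypothesis $D\subseteq\Im(f)$ also holds in that example, so it cannot rescue the step.

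For a concrete failure, take the variable-length shift example of Section~\ref{shadowthis}: $X=\{0,1\}^{\mathbb N}$, $D=X\setminus\{1^\infty\}$, and $f=\sigma^{r+1}$ on $Z(1^r0)$. It satisfies all hypotheses of Theorem~\ref{thm:transfer-shadowing}.

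For $k\in\mathbb N$, let $y^{(k)}$ be the word whose $t$-th coordinate is $0^{t-1}1^k0^\infty$ for $t\ge 1$. For $t\ge 2$ these points lie in $Z(0)$, where $f=\sigma$. Hence $f(0^{t-1}1^k0^\infty)=0^{t-2}1^k0^\infty$, and $y^{(k)}\in D_\infty$.

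The first coordinates $1^k0^\infty$ converge to $1^\infty$ in $X$, that is, to $\infty$ in $D^\infty$. So $y^{(k)}\to\vec 0$ in $D_0$ by Corollary~\ref{convergencia em W0}(a), and $\vec 0\in\widetilde X$. Yet $\pi_2(y^{(k)})=01^k0^\infty\to 01^\infty\in D$, while $\pi_2(\vec 0)=\infty$. Thus $\pi_2$, and hence $\pi_L^\infty$ for each $L\ge 2$, is discontinuous at $\vec 0$. Here $K=Z(0)$ is compact in $D$, but $f(K)=X\ni 1^\infty$.

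The statement does hold under an extra hypothesis such as $f(D)\subseteq D$. In that case $f^{\,n-m-1}(K)$ is a compact subset of $D$, and your contradiction argument goes through as written.
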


\begin{proof}
Let $(y^n)_{n\in\mathbb N}\subseteq \widetilde{X}$ be such that $y^n\to y\in \widetilde{X}$.
For each $n\in\mathbb N$, write $
y^n=(y_1^n,y_2^n,\dots).$

If $\ell(y)\ge L$, then Corollary~\ref{convergencia em W0} implies that
$
\pi_i(y^n)\to \pi_i(y)$
for each $i\in\{1,\dots,L\},$
and hence $
\pi_L^\infty(y^n)\to \pi_L^\infty(y).$

Suppose now that $\ell(y)<L$. Again by Corollary~\ref{convergencia em W0},
$
\pi_i(y^n)\to \pi_i(y)$
for each $i\in\{1,\dots,\ell(y)\}.$
It remains to show that $
\pi_i(y^n)\to \infty=\pi_i(y)$
for each $i\in\{\ell(y)+1,\dots,L\}.$

By Corollary~\ref{convergencia em W0}, we already have $
\pi_{\ell(y)+1}(y^n)\to \infty.$
We claim that whenever $\pi_j(y^n)\to\infty$ for some $j\ge1$, then also
$\pi_{j+1}(y^n)\to\infty$.
Indeed, suppose $\pi_j(y^n)\to\infty$, but $\pi_{j+1}(y^n)\not\to\infty$ in $D^\infty$.
Then there exists a compact set $K\subseteq D$ and a subsequence
$(y^{n_p})_{p\in\mathbb N}$ such that
$
\pi_{j+1}(y^{n_p})\in K$
for all  $p\in\mathbb N.$
Since $f\colon D\to X$ is continuous, $f(K)$ is compact. Moreover, we have
$
f\bigl(\pi_{j+1}(y^{n_p})\bigr)=\pi_j(y^{n_p}).$
Hence the subsequence $(\pi_j(y^{n_p}))_{p\in\mathbb N}$ is contained in the compact set
$f(K)$, contradicting the fact that $\pi_j(y^n)\to\infty$.
Thus $\pi_{j+1}(y^n)\to\infty$, proving the claim. 

By induction, we obtain
$
\pi_i(y^n)\to\infty$
for each  $i\in\{\ell(y)+1,\dots,L\}.$
Therefore,
$
\pi_L^\infty(y^n)\to \pi_L^\infty(y),
$
as required.
\end{proof}

\begin{corollary}\label{cor:pi1-uniform}
Equip $\widetilde{X}$ with the metric from Corollary~\ref{w0 eh metrico}, and equip $D^\infty$ with any metric compatible with its one-point compactification topology. Then
$
\pi_1^\infty : \widetilde{X} \to D^\infty
$
is uniformly continuous. In particular, its restriction $
\pi_1 := \pi_1^\infty|_{\Dom(\sigma)} : \Dom(\sigma) \to D$
is uniformly continuous, where $D$ is viewed as a subspace of $D^\infty$.
\end{corollary}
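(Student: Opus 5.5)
The plan is to derive uniform continuity from ordinary continuity plus compactness (Heine--Cantor). By Lemma~\ref{lem:pi1-continuous} with $L=1$, the map $\pi_1^\infty:\widetilde X\to D^\infty$ is continuous. By Corollary~\ref{cor:Xtilde-compact}, $\widetilde X$ is compact. By Corollary~\ref{w0 eh metrico}, the metric $d$ restricted to $\widetilde X$ induces its topology. The target $D^\infty$ carries a metric $d_\infty$ compatible with the one-point compactification topology; such a metric exists because $D$ is second countable and locally compact Hausdorff, but the statement assumes one is given anyway. A continuous map from a compact metric space into a metric space is uniformly continuous, and that is the first assertion.

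For completeness I would also recall the sequential argument behind Heine--Cantor. Suppose uniform continuity fails. Then there are $\varepsilon>0$ and points $x^n,y^n\in\widetilde X$ with $d(x^n,y^n)<1/n$ but $d_\infty(\pi_1^\infty(x^n),\pi_1^\infty(y^n))\ge\varepsilon$. By compactness we pass to a subsequence along which $x^n\to x$. Then $y^n\to x$ as well. Continuity then forces both images to converge to $\pi_1^\infty(x)$, which is a contradiction.

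For the restriction, note that $\Dom(\sigma)=\{x\in\widetilde X:\ell(x)\ge1\}$. On this set $\pi_1(x)=x_1\in D$, so $\pi_1^\infty$ maps $\Dom(\sigma)$ into $D$. The restriction of a uniformly continuous map to a subset is uniformly continuous with the same $\delta$ for each $\varepsilon$. We view $D$ with the metric $d_\infty|_D$, which is the phrase ``$D$ viewed as a subspace of $D^\infty$'' in the statement.

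I do not expect a real obstacle. The only points needing care are, first, that uniform continuity is with respect to the restricted metric on $D$ coming from $D^\infty$ and not some arbitrary metric on $D$. Second, compactness must be used on $\widetilde X$ itself and not on $\Dom(\sigma)$, which may fail to be compact when $\vec0\in\widetilde X$. This is why the argument passes through $\pi_1^\infty$ rather than treating $\pi_1$ directly.
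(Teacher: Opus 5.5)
Your proposal is correct and matches the paper's proof. Both use continuity of $\pi_1^\infty$ from Lemma~\ref{lem:pi1-continuous}, compactness and metrizability of $\widetilde X$ with Heine--Cantor, and then restriction to $\Dom(\sigma)$, whose image lies in $D$; your sequential proof of Heine--Cantor and your remark on why compactness is used on $\widetilde X$ rather than on $\Dom(\sigma)$ are sound additions.
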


\begin{proof}
By Lemma~\ref{lem:pi1-continuous}, the map $\pi_1^\infty$ is continuous. Since $\widetilde{X}$ is compact and metrizable (Corollary~\ref{cor:Xtilde-compact} and Remark~\ref{remarkXtildeultramet}), it follows that $\pi_1^\infty$ is uniformly continuous. The restriction of a uniformly continuous map to a subspace is uniformly continuous, so
$
\pi_1=\pi_1^\infty|_{\Dom(\sigma)}
$
is uniformly continuous as a map into $D^\infty$. Since its values lie in $D$, the result follows.
\end{proof}

\begin{definition}\label{def:sep-property}
Let $(X,f)$ be a Deaconu--Renault system with $\Dom(f)=D$, and let $D^\infty$ be the one-point
compactification of $D$. Fix a compatible ultrametric $d_\infty$ on $D^\infty$ and denote again by $d_\infty$
its restriction to $D$.

We say that $(X,f)$ satisfies the separation property on $D^\infty$ (with respect to $d_\infty$) if there exist
constants $R>0$ and $0<\theta<1$, and a countable family $\{W_r\}_{r\in\mathbb N}\subseteq\mathcal B$
covering $D$, such that for every $r\in\mathbb N$:
\begin{enumerate}
\item[\textnormal{(A1)}] The restriction $f|_{W_r}\colon W_r\to f(W_r)$ is a homeomorphism onto its image and
$f(W_r)\subseteq D^\infty$. Let $g_r:=(f|_{W_r})^{-1}\colon f(W_r)\to W_r$ denote the inverse branch.

\item[\textnormal{(A2)}] (Uniform contraction in $d_\infty$) The inverse branch $g_r$ is $\theta$--Lipschitz
(with respect to $d_\infty$ on $f(W_r)\subseteq D^\infty$ and on $W_r\subseteq D$), i.e.
\[
d_\infty\bigl(g_r(a),g_r(b)\bigr)\le \theta\, d_\infty(a,b)
\qquad\text{for all }a,b\in f(W_r).
\]

\item[\textnormal{(A3)}] (Uniform interior radius in $D^\infty$) For every $y\in f(W_r)$ one has
\[
B_{D^\infty}(y,R)\subseteq f(W_r).
\]
\end{enumerate}
\end{definition}

Next, we prove an auxiliary result for Deaconu--Renault systems satisfying the separation property, which will play a key role in the proof of the main shadowing theorem of this section.

\begin{lemma}\label{lem:UM-sep-inv}
Let $(X,f)$ be a Deaconu--Renault system with $\Dom(f)=D$, and let $D^\infty$ be the one-point
compactification of $D$. Assume:
\begin{enumerate}
\item[(i)] $D\subseteq f(D)\subseteq D^\infty$.

\item[(ii)] $D^\infty$ is equipped with a compatible ultrametric $d_\infty$, and we denote by the same symbol its
restriction to $D$ (and to $f(D)\subseteq D^\infty$).

\item[(iii)] $(X,f)$ satisfies the separation property on $D^\infty$ with respect to $d_\infty$
(Definition~\ref{def:sep-property}), with constants $R>0$, $0<\theta<1$, and a countable cover
$\{W_r\}_{r\in\mathbb N}\subseteq\mathcal B$ of $D$.
\end{enumerate}

Fix $0<\rho\le R$ and let $\mathcal U_\rho$ be the partition of $D$ into $d_\infty$--balls of radius $\rho$
(discarding repetitions): $
\mathcal U_\rho:=\{\,B_D(x,\rho): x\in D\,\}.$
Then:
\begin{enumerate}
\item[\textnormal{(Sep-$\rho$)}] $\mathcal U_\rho$ is clopen and $\rho$--separated: if $U\neq U'$ in $\mathcal U_\rho$
then $d_\infty(U,U')\ge \rho$.

\item[\textnormal{(Inv-$\rho$)}] For every $r\in\mathbb N$ and every atom $U\in\mathcal U_\rho$ with
$U\cap f(W_r)\neq\emptyset$, one has $U\subseteq f(W_r)$ (hence $g_r$ is defined on all of $U$) and
\[
\diam(U)\le \rho
\qquad\text{and}\qquad
\diam\bigl(g_r(U)\bigr)\le \theta\rho < \rho,
\]
where $g_r=(f|_{W_r})^{-1}\colon f(W_r)\to W_r$ is the inverse branch.
\end{enumerate}
\end{lemma}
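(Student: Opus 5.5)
The plan is to derive everything from the ultrametric ball properties in Lemma~\ref{ultramentric prop}, together with the axioms (A2) and (A3) of Definition~\ref{def:sep-property}. Balls will be taken with respect to the restriction of $d_\infty$ to $D$, so that $B_D(x,\rho)=B_{D^\infty}(x,\rho)\cap D$. Here $B$ denotes the open ball, and diameters are computed with $d_\infty$.

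For (Sep-$\rho$), the first step is that $\mathcal U_\rho$ is a partition of $D$. By Lemma~\ref{ultramentric prop}(1), two balls of radius $\rho$ are either equal or disjoint, and every $x\in D$ lies in $B_D(x,\rho)$. Open balls are open. They are also closed, since their complement is a union of balls of radius $\rho$, each of which is open. Countability holds because $D$ is second countable (it has the countable basis $\mathcal B$), so a disjoint family of nonempty open sets in $D$ is countable. For separation, take $U=B_D(x,\rho)\neq U'$, $a\in U'$ and $b\in U$. Then $a\notin B_D(x,\rho)$, and Lemma~\ref{ultramentric prop}(3) gives $d_\infty(a,b)\ge\rho$.

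For (Inv-$\rho$), fix $r$ and an atom $U$ with some $y\in U\cap f(W_r)$. By Lemma~\ref{ultramentric prop}(2) we have $U=B_D(y,\rho)$. Since $\rho\le R$, (A3) gives
\[
U\subseteq B_{D^\infty}(y,\rho)\subseteq B_{D^\infty}(y,R)\subseteq f(W_r).
\]
Hence $g_r$ is defined on all of $U$. For any $a,b\in U$, the ultrametric inequality gives $d_\infty(a,b)\le\max\{d_\infty(a,y),d_\infty(y,b)\}<\rho$, so $\diam(U)\le\rho$. Finally, (A2) yields $d_\infty(g_r(a),g_r(b))\le\theta\,d_\infty(a,b)\le\theta\rho$ for all $a,b\in U$, so $\diam(g_r(U))\le\theta\rho<\rho$ because $\theta<1$.

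I expect no real obstacle. The only points needing care are bookkeeping: being consistent about open versus closed balls and about balls in $D$ versus balls in $D^\infty$ when invoking (A3), and justifying that the partition is countable.
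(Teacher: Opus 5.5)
Your proposal is correct and follows the paper's proof. The ultrametric ball properties give a clopen, $\rho$-separated partition; re-centering the atom at a point of $U\cap f(W_r)$ and applying (A3) with $\rho\le R$ gives $U\subseteq f(W_r)$; and (A2) bounds $\diam(g_r(U))$ by $\theta\rho$. Your added justifications that the balls are closed and that the partition is countable fill in details the paper leaves implicit.
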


\begin{proof}
Since $d_\infty$ is an ultrametric on $D^\infty$, every $d_\infty$--ball in $D$ is clopen in $D$
and any two balls of the same radius are either disjoint or equal. Hence $\mathcal U_\rho$ is a
clopen partition of $D$, and if $U\neq U'$ are distinct atoms then $d_\infty(U,U')\ge\rho$,
proving \textnormal{(Sep-$\rho$)}.

Let $r\in\mathbb N$ and $U\in\mathcal U_\rho$ with $U\cap f(W_r)\neq\emptyset$, and pick
$z\in U\cap f(W_r)$. Since $U$ is a $d_\infty$--ball of radius $\rho$ in the ultrametric space $(D,d_\infty)$,
we have $U=B_D(z,\rho)$. Moreover $B_D(z,\rho)=B_{D^\infty}(z,\rho)\cap D$, and since $\rho\le R$,
\[
U\subseteq B_{D^\infty}(z,\rho)\subseteq B_{D^\infty}(z,R)\subseteq f(W_r)
\]
by \textnormal{(A3)}. In particular $g_r$ is defined on all $U$ and $\diam_{d_\infty}(U)\le\rho$.

Finally, for any $a,b\in U$, by \textnormal{(A2)} we have
\[
d_\infty\bigl(g_r(a),g_r(b)\bigr)\le \theta\, d_\infty(a,b)\le \theta\,\diam_{d_\infty}(U)\le \theta\rho,
\]
so $\diam_{d_\infty}\bigl(g_r(U)\bigr)\le \theta\rho<\rho$, proving \textnormal{(Inv-$\rho$)}.
\end{proof}

We now prove the main theorem of this section, namely, that shadowing for a Deaconu--Renault system with an expanding atlas is equivalent to shadowing for its inverse-limit system.

\begin{theorem}\label{thm:transfer-shadowing}
Let $(X,f)$ be a Deaconu--Renault system with $D:=\Dom(f)$ an infinite locally compact Hausdorff space admitting a countable basis
$\mathcal B$ of compact open sets closed under finite intersections and such that $D\subseteq f(D)\subseteq D^\infty$.
Let $(E,\alpha_f)$ be the associated Deaconu--Renault system from
Proposition~\ref{prop:shift-DR-correct} and assume that $\vec 0\in E$.
Fix a compatible ultrametric $d_\infty$ on $D^\infty$, and suppose  that $(X,f)$ satisfies the
separation property on $D^\infty$ with respect to $d_\infty$, as in Definition \ref{def:sep-property}.

Then $(E,\alpha_f)$ has the shadowing property if and only if $(D^\infty,f)$, with
$\Dom(f)=D$, has the shadowing property.
\end{theorem}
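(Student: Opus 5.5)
We prove the two implications separately, working with the partition form of shadowing from Proposition~\ref{prop:shadowing-char}. On the base we use the ball partitions $\mathcal U_\rho$ of Lemma~\ref{lem:UM-sep-inv} for small $\rho\le R$; these are $\rho$--separated, so by Proposition~\ref{prop:ball-tame-defining} they give a tame defining sequence. On $E\subseteq \widetilde X$ we use the compact ultrametric $d$ of Corollary~\ref{w0 eh metrico}. The links between the two levels are the projection $\pi_1$ and the maps $\widehat\sigma,\alpha_f$. By Proposition~\ref{prop:shift-DR-correct}(iii), $\pi_1\circ\alpha_f^{\,n}(y)=f^n(\pi_1(y))$ for $y$ in the domain, so $\pi_1$ semiconjugates $\alpha_f$ to $f$. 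By Corollary~\ref{cor:pi1-uniform}, $\pi_1$ is uniformly continuous.

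\emph{$(E,\alpha_f)$ shadowing $\Rightarrow$ $(D^\infty,f)$ shadowing.} Let $(a_n)$ be a $\delta$--pseudo-orbit in $D$. We lift it to a pseudo-orbit in $E$ by attaching to each $a_n$ a backward branch. Since $D\subseteq f(D)$ and the $W_r$ cover $D$, every point has a preimage in $D$. Starting from $a_n$, we repeatedly choose $r$ with the current point in $f(W_r)$ and apply $g_r$. This produces an infinite backward path $y^{(n)}\in D_\infty\subseteq E$ with first coordinate $a_n$.

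The key point is to make these choices coherently. When $f(a_n)$ and $a_{n+1}$ lie in the same atom $U\in\mathcal U_\rho$, (Inv-$\rho$) lets us use the same branch $g_r$ at every stage. Then $\alpha_f(y^{(n)})$ and $y^{(n+1)}$ agree coordinatewise up to errors $\rho,\theta\rho,\theta^2\rho,\dots$. Concretely, we build the branches of $y^{(n+1)}$ from those of $\alpha_f(y^{(n)})$, inductively in $n$. Because the errors contract geometrically, coordinatewise closeness at depth $\le L$ forces $d$--closeness in $D_0$: only finitely many elements $p_i$ of $\mathcal P$ matter below a given scale, and each involves finitely many coordinates in compact sets. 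Hence $(y^{(n)})$ is a $\delta'$--pseudo-orbit for $\alpha_f$. A shadowing point $z\in E$ then projects to $\pi_1(z)\in D^\infty$, which shadows $(a_n)$ by uniform continuity of $\pi_1$. If $z$ has finite length, we must still check $\pi_1(\alpha_f^n z)\in D$; this holds because $\alpha_f$ never lowers length. The case $\pi_1(z)=\infty$ or $z=\vec 0$ is excluded once $\varepsilon$ is small, since $a_0\in D$.

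\emph{$(D^\infty,f)$ shadowing $\Rightarrow$ $(E,\alpha_f)$ shadowing.} Let $(y^{(n)})$ be a $\delta$--pseudo-orbit in $E\setminus\{\vec 0\}$. By Lemma~\ref{lem:pi1-continuous} with large $L$, closeness in $d$ controls the first $L$ coordinates in $(D^\infty)^L$. So $a_n:=\pi_1(y^{(n)})$ is a pseudo-orbit in $D$, and base shadowing gives $w\in D$ with $f^n(w)$ $\mathcal U_\rho$--close to $a_n$ for all $n$.

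To build a point of $E$, we must also choose a past for $w$. We do this by pulling back along the branches that realize the past of $y^{(0)}$. Starting from $w$ and $y^{(0)}$, whose first coordinates lie in a common atom, (Inv-$\rho$) lets us define $z_k:=g_{r_k}(z_{k-1})$ using the same branches as $y^{(0)}$; this yields $z=(w,z_2,z_3,\dots)$. For every $n$, the first $L$ coordinates of $\alpha_f^n(z)$ are obtained as follows: the first $n+1$ coordinates are $f^n(w),\dots,w$, and the later ones are the $z_k$. We compare each of these with the corresponding coordinates of $y^{(n)}$. The comparison is again by the contraction (A2), now propagated backwards from time $n$; this uses that $y^{(n)}$ and $\alpha_f(y^{(n-1)})$ are coordinatewise close at depth $L$.

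When $y^{(0)}$ has finite length, we let $z$ have the same finite length, or let its tail escape to $\infty$. We then check $z\in D_{\mathrm{lim}}$ via Proposition~\ref{prop:closure-Dinfty}, or else approximate by infinite paths using (A3).

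\emph{Main obstacle.} The hardest step is the quantitative comparison between the metric $d$ on $D_0$, which is defined abstractly via an enumeration of $\mathcal P$, and coordinatewise $d_\infty$--closeness. We must show that for every $\varepsilon$ there exist $L$ and $\eta$ such that $d_\infty$--agreement within $\eta$ on the first $L$ coordinates, together with matching length behaviour, implies $d<\varepsilon$; and conversely via Lemma~\ref{lem:pi1-continuous}. Our first attempt is the following. Only $p_1,\dots,p_N$ with $2^{-N}<\varepsilon$ matter. Each $Z[p_i,\emptyset]$ is determined by finitely many coordinates lying in compact open sets. These sets are at positive $d_\infty$--distance from their complements in $D^\infty$, because $D^\infty$ is compact and the sets are clopen. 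The length-one words and the point $\vec 0$ need separate care. Here we use $\vec 0\in E$ and the fact that finite words arise as limits of tails escaping to $\infty$.
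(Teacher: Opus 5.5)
Your proposal is essentially the paper's proof. In one direction you lift the pseudo-orbit by pulling $a_{n+1}$ back along the same inverse branches $g_r$ that realize the coordinates of $\alpha_f(y^{(n)})$, and you control $d$ through the finitely many basic sets occurring in $p_1,\dots,p_l$ (via $\min_B \dist(B,D\setminus B)$); in the other you project with $\pi_1$, shadow downstairs, and pull the shadowing point back along the branches of $y^{(0)}$.

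For finite-length $y^{(0)}$, use your second option: pull $w$ back along the branches of an infinite path $u\in D_\infty$ with $\Pi_L(u)$ close to $\Pi_L(y^{(0)})$, so that the coordinates beyond $\ell(y^{(0)})$ stay near $\infty$. This is more careful than the paper, which extends arbitrarily past $\ell(y^{(0)})$ and so does not control those coordinates. Keeping $z$ finite of the same length can fail: points in the same $\mathcal U_\rho$-atom may differ in whether their preimages accumulate at $\infty$, so $z$ need not lie in $D_{\mathrm{lim}}$.
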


\begin{proof}
Suppose first that $(E,\alpha_f)$ has the shadowing property.

Let $\varepsilon>0$ be given. We will find $\delta>0$ such that every $\delta$--pseudo-orbit
$(x_i)_{i\ge0}\subseteq D$ for $f$ (measured with $d_\infty$) is $\varepsilon$--shadowed by an $f$--orbit.

We keep the ultrametric $d$ on $W_0$ defined in Section~\ref{W0 section} via the enumeration
$\mathcal P=\{p_1,p_2,\dots\}$ and the generalized cylinders $Z[p_j,\emptyset]$.

\bigskip
\noindent {\it Step 1: Shadowing upstairs and uniform continuity of the coordinate projection.}

Let $\pi\colon E\to D^\infty$ be the first-coordinate map (with $\pi(\vec 0)=\infty$), i.e.
$\pi(y_1y_2\cdots)=y_1$ for $\ell(y)\ge1$.
By Lemma~\ref{lem:pi1-continuous}, $\pi$ is continuous. Since $\vec{0}\in E$, then by Proposition \ref{prop:shift-DR-correct} $E$ is compact, and so $\pi$ is uniformly continuous.
Choose $n\in\mathbb N$ such that $2^{-n}\le \varepsilon$ and set $\eta:=2^{-(n+1)}$.
Then there exists $k\in\mathbb N$ such that
\begin{equation}\label{eq:pi-uc}
d(u,v)<2^{-k}\ \Longrightarrow\ d_\infty\bigl(\pi(u),\pi(v)\bigr)<\eta.
\end{equation}

Define, for each $m\in\N$, \[
\mathcal V_m:=\{\,B_E(z,2^{-m}): z\in E\,\}\quad (m\in\mathbb N)
\]

(discarding repetitions, and restricting to $\Dom(\alpha_f)=E\setminus\{\vec 0\}$) 
which, by Proposition~\ref{prop:ball-tame-defining}, is a tame defining sequence. 

Since $(E,\alpha_f)$ has shadowing, applying Proposition~\ref{prop:shadowing-char} to $\{\mathcal{V}_m\}_{m\in \N}$ gives an index $l\ge k$ such that every $\mathcal V_l$--pseudo-orbit for $\alpha_f$
is $\mathcal V_k$--shadowed.

\bigskip
\noindent {\it Step 2: A finite family of basic sets detected by the first $l$ cylinders.}

Let $L:=\max\{\ell(p_j):1\le j\le l\}$ and let $\mathcal F_l\subseteq\mathcal B$ be the finite family of compact open basic sets that appear among the first $l$ words $p_1,\dots,p_l$ (i.e.\ each $p_j$ has the form $p_j=B_j^1\times...\times B_j^{\ell(p_j)}$ and $\mathcal{F}_l$ is the set of all the $B_t^j$ occurring there).
For each $B\in\mathcal F_l$, since $B$ is compact and $D\setminus B$ is closed in the metric space $(D,d_\infty)$,
we have $\dist(B,D\setminus B)>0$ (with the metric $d_\infty$). Set
\[
\rho_l^{\mathcal F}:=\min_{B\in\mathcal F_l}\dist(B,\,D\setminus B)\ >0.
\]

Choose $\rho>0$ so small that
\begin{equation}\label{eq:rho-small}
0<\rho\le R
\qquad\text{and}\qquad
\rho<\rho_l^{\mathcal F}, 
\end{equation}
where $R$ is as in Definition \ref{def:sep-property}.

Let $\mathcal U_\rho$ be the radius-$\rho$ ball partition of $D$ from Lemma~\ref{lem:UM-sep-inv}.
Set
\[
\delta:=\rho/2.
\]

\bigskip
\noindent {\it Step 3: Lifting an $f$--pseudo-orbit to an $\alpha_f$--pseudo-orbit (finite depth).}

Let $(x_i)_{i\ge0}\subseteq D$ be a $\delta$--pseudo-orbit for $f$:
\[
d_\infty\bigl(f(x_i),x_{i+1}\bigr)<\delta\qquad(i\ge0).
\]
Since $\delta<\rho$ and $\mathcal U_\rho$ is $\rho$--separated, for each $i$ there is a unique atom
$U_i^{(0)}\in\mathcal U_\rho$ such that
\begin{equation}\label{eq:Ui0}
f(x_i)\in U_i^{(0)}\quad\text{and}\quad x_{i+1}\in U_i^{(0)}.
\end{equation}

Choose $y_0\in\Dom(\alpha_f)$ with $\pi(y_0)=x_0$ (exists since $D\subseteq f(D)$) and write
$y_0=(y_{0,1},y_{0,2},\dots)$ with $y_{0,1}=x_0$ and $f(y_{0,t+1})=y_{0,t}$ for all $t\ge1$.

Inductively, assume $y_i\in\Dom(\alpha_f)$ is chosen and write
\[
y_i=(y_{i,1},y_{i,2},\dots)=(x_i,y_{i,2},y_{i,3},\dots),
\qquad f(y_{i,t+1})=y_{i,t}.
\]
Let $a_t$ denote the $t$-th coordinate of $\alpha_f(y_i)$:
\[
a_0=f(x_i),\quad a_1=x_i,\quad a_2=y_{i,2},\ \dots,\ a_{L-1}=y_{i,L-1}.
\]

We define the first $L$ coordinates of $y_{i+1}$ by a backward recursion.
Set $b_0:=x_{i+1}$. For $t=0,1,\dots,L-2$ do:
\begin{itemize}
\item Let $U_i^{(t)}\in\mathcal U_\rho$ be the unique atom containing both $a_t$ and $b_t$
(existence/uniqueness follows from $d_\infty(a_t,b_t)<\rho$ and \textnormal{(Sep-$\rho$)}; for $t=0$ this is
$U_i^{(0)}$ by \eqref{eq:Ui0}).

\item Choose $r_{i,t}\in\mathbb N$ such that $a_{t+1}\in W_{r_{i,t}}$ (possible since $\{W_r\}$ covers $D$).
Then $a_t=f(a_{t+1})\in f(W_{r_{i,t}})$, so $U_i^{(t)}\cap f(W_{r_{i,t}})\neq\emptyset$.
By Lemma~\ref{lem:UM-sep-inv} we have $U_i^{(t)}\subseteq f(W_{r_{i,t}})$, hence $g_{r_{i,t}}$ is defined on $U_i^{(t)}$.

\item Define
\[
b_{t+1}:=g_{r_{i,t}}(b_t).
\]
Since $a_{t+1}\in W_{r_{i,t}}$ and $f(a_{t+1})=a_t$, we also have $a_{t+1}=g_{r_{i,t}}(a_t)$.
Using \textnormal{(A2)} and the fact that $a_t,b_t\in U_i^{(t)}$ we obtain
\[
d_\infty(a_{t+1},b_{t+1})
=d_\infty\bigl(g_{r_{i,t}}(a_t),g_{r_{i,t}}(b_t)\bigr)
\le \theta\, d_\infty(a_t,b_t)
<\theta\,\rho<\rho.
\]
Therefore $a_{t+1}$ and $b_{t+1}$ belong to the same (unique) atom $U_i^{(t+1)}\in\mathcal U_\rho$.
\end{itemize}

Define $(y_{i+1})_t:=b_{t-1}$ for $1\le t\le L$ (so the first coordinate is $x_{i+1}=b_0$), and extend the finite
backward string to an infinite backward $f$--orbit (possible since $D\subseteq f(D)$) to obtain
$y_{i+1}\in\Dom(\alpha_f)$.

\bigskip
\noindent {\it Claim. $(y_i)_{i\ge0}$ is a $\mathcal V_l$--pseudo-orbit for $\alpha_f$, i.e.
$d(\alpha_f(y_i),y_{i+1})<2^{-l}$ for all $i$.}

\smallskip
\noindent {\it Proof of the claim:}
Fix $j\le l$ and write $p_j=B_1\times\cdots\times B_m$ with $m=\ell(p_j)\le L$.
From the definition of $Z[p_j, \emptyset]$, membership in $Z[p_j,\emptyset]$ depends only on the first $m$ coordinates.

For each $t=0,1,\dots,m-1$, the $t$-th coordinate of $\alpha_f(y_i)$ is $a_t$ and the $t$-th coordinate of $y_{i+1}$
is $b_t$. By construction $a_t$ and $b_t$ lie in the same atom of $\mathcal U_\rho$, hence
$d_\infty(a_t,b_t)\le\diam(U_i^{(t)})\le\rho<\rho_l^{\mathcal F}$.
Therefore $a_t$ and $b_t$ have the same membership in every $B\in\mathcal F_l$, and in particular
\[
a_t\in B_{t+1}\iff b_t\in B_{t+1}\qquad(0\le t\le m-1).
\]
Hence $\alpha_f(y_i)\in Z[p_j,\emptyset]\iff y_{i+1}\in Z[p_j,\emptyset]$ for all $j\le l$.
By the definition of the ultrametric $d$ on $W_0$, this implies $d(\alpha_f(y_i),y_{i+1})<2^{-l}$.
This proves the claim.

\bigskip
\noindent {\it Step 4: Shadow upstairs and project down.}

By shadowing on $(E,\alpha_f)$ (see the last line of Step 1), there exists $z\in\Dom(\alpha_f)$ such that
\[
d(\alpha_f^i(z),y_i)<2^{-k}\qquad\forall i\ge0.
\]
Applying \eqref{eq:pi-uc} yields
\[
d_\infty\bigl(\pi(\alpha_f^i(z)),\pi(y_i)\bigr)<\eta=2^{-(n+1)}\qquad\forall i\ge0.
\]
Since $\pi(y_i)=x_i$ and $\pi(\alpha_f^i(z))=f^i(\pi(z))$, letting $x:=\pi(z)\in D$ we obtain
\[
d_\infty\bigl(f^i(x),x_i\bigr)<2^{-(n+1)}\le \varepsilon\qquad\forall i\ge0,
\]
so $(x_i)_{i\geq 0}$ is $\varepsilon$--shadowed by the orbit of $x$. This proves shadowing for $(D^\infty,f)$.

\medskip

 Now, suppose that $(D^\infty,f)$ has the shadowing property. We will show that $(E,\alpha_f)$ has the shadowing property using the characterization of shadowing via ball partitions (equivalently, via a tame defining
sequence) for ultrametric systems.

For each $m\geq 1$ let $\mathcal V_m:=\{B_E(z,2^{-m}):z\in E\}$ (discarding repetitions) be the ball
partition of $(E,d)$; then $(\mathcal V_m)_{m\ge1}$ is a tame defining sequence on $E$, by Proposition \ref{prop:ball-tame-defining}.
Fix $k\in\mathbb N$. It suffices to show that there exists $m\ge k$ such that every $\mathcal V_m$--pseudo-orbit for $\alpha_f$
(in $\Dom(\alpha_f)$) is $\mathcal V_k$--shadowed.

\bigskip
\noindent {\it Step 1: Choose a scale $\rho$ in $D^\infty$ controlled by the first $k$ cylinders.}

Let
\[
L:=\max\{\ell(p_j):1\le j\le k\},
\]
and let $\mathcal F_k\subseteq\mathcal B$ be the finite family of basis sets that appear among
$p_1,\dots,p_k$ (where $\mathcal{P}=\{p_1,p_2,...\}$ is the enumeration fixed in the beginning of this proof).
Set
\[
\rho_k^{\mathcal F}:=\min_{B\in\mathcal F_k}\dist(B,\,D\setminus B)>0.
\]
Choose $0<\rho\le R$ (where $R$ is as in Definition \ref{def:sep-property}) such that
\begin{equation}\label{eq:rho-choice-conv-rev}
\rho<\rho_k^{\mathcal F}.
\end{equation}
Then any subset of $D$ of $d_\infty$--diameter $\le \rho$ is either contained in $B$ or disjoint from $B$,
for every $B\in\mathcal F_k$.

Let $\mathcal U_\rho$ be the partition of $D$ into $d_\infty$--balls of radius $\rho$
(discarding repetitions). Notice that, by Lemma~\ref{lem:UM-sep-inv}, $\mathcal U_\rho$ is clopen and $\rho$--separated, and
for every $r$ and every $U\in\mathcal U_\rho$ with $U\cap f(W_r)\neq\emptyset$ we have $U\subseteq f(W_r)$ and
\begin{equation}\label{eq:inv-rho-conv-rev}
\diam(g_r(U))\le \theta\rho<\rho.
\end{equation}

\bigskip
\noindent {\it Step 2: Obtain a downstairs shadowing scale $\delta_f$ for tolerance $\rho/2$.}

Since $(D^\infty,f)$ has the shadowing property, apply it with tolerance $\rho/2$ to obtain $\delta_f>0$ such that:
\begin{equation}\label{eq:f-shadow-rev}
\text{every $\delta_f$--pseudo-orbit $(x_i)_{i\ge0}\subseteq D$ for $f$ is $(\rho/2)$--shadowed by some orbit in $D$.}
\end{equation}

\bigskip
\noindent {\it Step 3: Choose $m$ so that $d$--closeness in $E$ controls the first $L$ coordinates in $d_\infty$.}

For $t\ge1$, let $\pi_t:E\to D^\infty$ denote the $t$--th coordinate map (with the convention that if a word has
length $<t$, then $\pi_t$ takes value $\infty\in D^\infty$). Consider
\[
\Pi_L:E\longrightarrow (D^\infty)^L,\qquad \Pi_L(y):=(\pi_1(y),\dots,\pi_L(y)),
\] which is a continuous map, from Lemma \ref{lem:pi1-continuous}. Since $E$ is compact and $\Pi_L$ is continuous, $\Pi_L$ is uniformly continuous. Hence there exists $m_0\in\mathbb N$
such that
\begin{equation}\label{eq:PiL-uc-rev}
d(u,v)<2^{-m_0}\ \Longrightarrow\ d_\infty(\pi_t(u),\pi_t(v))<\min\{\rho/2,\delta_f\}
\ \ \text{for all }1\le t\le L.
\end{equation}
Now fix $m:=\max\{k,m_0\},$ where $k$ is the fixed natural number in the beginning of this proof.

\bigskip
\noindent {\it Step 4: Project an $\mathcal V_m$--pseudo-orbit in $E$ to a $\delta_f$--pseudo-orbit in $D$.}

Let $(y_i)_{i\ge0}\subseteq\Dom(\alpha_f)$ be a $\mathcal V_m$--pseudo-orbit for $\alpha_f$, i.e. $
d\bigl(\alpha_f(y_i),y_{i+1}\bigr)<2^{-m}$, $i\ge 0$. 
Set $x_i:=\pi_1(y_i)\in D$. Applying \eqref{eq:PiL-uc-rev} to $\alpha_f(y_i)$ and $y_{i+1}$ at coordinate $t=1$ yields
\[
d_\infty\bigl(\pi_1(\alpha_f(y_i)),\,\pi_1(y_{i+1})\bigr)<\delta_f.
\]
But $\pi_1(\alpha_f(y_i))=f(\pi_1(y_i))=f(x_i)$, so $
d_\infty\bigl(f(x_i),x_{i+1}\bigr)<\delta_f$, $i\ge0,$
i.e.\ $(x_i)_{i\geq 0}$ is a $\delta_f$--pseudo-orbit for $f$ in $D$. By \eqref{eq:f-shadow-rev}, there exists $x\in D$ with
$f^i(x)$ defined for all $i\ge0$ and
\begin{equation}\label{eq:shadow-base-rev}
d_\infty\bigl(f^i(x),x_i\bigr)<\rho/2\qquad(i\ge0).
\end{equation}

\bigskip
\noindent {\it Step 5: Build a lift $z\in\Dom(\alpha_f)$ of $x$ and compare $\alpha_f^i(z)$ with $y_i$.}

Write $y_0=y_{0,1}y_{0,2}\cdots$.
For each $t\ge1$ such that $\pi_{t+1}(y_0)\in D$ (i.e.\ the $(t+1)$-st coordinate is an actual point of $D$),
choose an index $r_t$ with $y_{0,t+1}\in W_{r_t}$, so that $y_{0,t}=f(y_{0,t+1})\in f(W_{r_t})$ (where the sets $W_r$ are as in Definition \ref{def:sep-property}).

Define $z=z_1z_2\cdots\in D_0$ recursively by
\[
z_1:=x,\qquad z_{t+1}:=g_{r_t}(z_t),
\]
whenever $r_t$ has been chosen and $g_{r_t}(z_t)$ is defined. We claim (by induction on $t$) that as long as
$r_t$ is chosen, this recursion is well-defined and satisfies
\begin{equation}\label{eq:z-close-y0-rev}
d_\infty(z_t,y_{0,t})<\rho/2\qquad\text{for all such }t.
\end{equation}
Indeed, for $t=1$, \eqref{eq:shadow-base-rev} gives
$d_\infty(z_1,y_{0,1})=d_\infty(x,x_0)<\rho/2\le R$. Since $y_{0,1}\in f(W_{r_1})$ and
$B_{D^\infty}(y_{0,1},R)\subseteq f(W_{r_1})$ by (A3), we have $z_1\in f(W_{r_1})$, so $z_2=g_{r_1}(z_1)$ is defined.
Then (A2) gives
\[
d_\infty(z_2,y_{0,2})
=d_\infty\bigl(g_{r_1}(z_1),g_{r_1}(y_{0,1})\bigr)
\le \theta\, d_\infty(z_1,y_{0,1})
<\rho/2.
\]
The inductive step is identical: if $d_\infty(z_t,y_{0,t})<\rho/2\le R$ and $y_{0,t}\in f(W_{r_t})$, then (A3) gives
$z_t\in f(W_{r_t})$, so $z_{t+1}$ is defined, and (A2) gives $d_\infty(z_{t+1},y_{0,t+1})<\rho/2$.

Extending (if necessary) the finite backward string thus obtained to an infinite backward $f$--orbit (using $D\subseteq f(D)$),
we obtain $z\in E$ with $z\neq\vec 0$, hence $z\in\Dom(\alpha_f)$.

\bigskip
\noindent {\it Step 6: Show that $z$ $\mathcal V_k$--shadows $(y_i)_{i\geq 0}$.}

We first prove that for every $i\ge0$ and every $1\le t\le L$,
\begin{equation}\label{eq:coord-close-rev}
d_\infty\bigl(\pi_t(\alpha_f^i(z)),\,\pi_t(y_i)\bigr)<\rho.
\end{equation}

Fix $i\ge0$.

{\it Case 1: $t\le i+1$.}

Then $\pi_t(\alpha_f^i(z))=f^{\,i-t+1}(z_1)=f^{\,i-t+1}(x)$. On the other hand, since $(y_i)_{i\geq 0}$ is a $\mathcal V_m$--pseudo-orbit,
applying \eqref{eq:PiL-uc-rev} to each pair $(\alpha_f(y_s),y_{s+1})$ at coordinate $t$ gives (for $2\le t\le L$)
\[
d_\infty\bigl(\pi_t(y_{s+1}),\,\pi_{t-1}(y_s)\bigr)
=d_\infty\bigl(\pi_t(y_{s+1}),\,\pi_t(\alpha_f(y_s))\bigr)
<\rho/2.
\]
Iterating this inequality $t-1$ times and using the ultrametric triangle inequality yields
\[
d_\infty\bigl(\pi_t(y_i),\,x_{i-t+1}\bigr)<\rho/2
\qquad\text{whenever }t\le i+1\text{ and }t\le L.
\]
Combining with \eqref{eq:shadow-base-rev} (at index $i-t+1$) gives
\[
d_\infty\bigl(\pi_t(\alpha_f^i(z)),\,\pi_t(y_i)\bigr)
\le \max\Bigl\{d_\infty\bigl(f^{\,i-t+1}(x),x_{i-t+1}\bigr),\, d_\infty\bigl(x_{i-t+1},\pi_t(y_i)\bigr)\Bigr\}
<\rho/2<\rho.
\]

{\it Case 2: $t>i+1$ (so $t-i\ge2$ and $t\le L$).}

Then $\pi_t(\alpha_f^i(z))=z_{t-i}$. By construction \eqref{eq:z-close-y0-rev},
$d_\infty(z_{t-i},y_{0,t-i})<\rho/2$. Also, iterating the coordinate-shift inequalities as above gives $
d_\infty\bigl(\pi_t(y_i),\,y_{0,t-i}\bigr)<\rho/2$, $t\le L,$
so again
\[
d_\infty\bigl(\pi_t(\alpha_f^i(z)),\,\pi_t(y_i)\bigr)
\le \max\Bigl\{d_\infty(z_{t-i},y_{0,t-i}),\, d_\infty(y_{0,t-i},\pi_t(y_i))\Bigr\}
<\rho/2<\rho.
\]
This proves \eqref{eq:coord-close-rev}.

Now let $j\le k$ ($k$ is fixed in the beginning of this proof) and write $p_j=B_1\times\cdots\times B_m$ with $m=\ell(p_j)\le L$.
By \eqref{eq:coord-close-rev} and \eqref{eq:rho-choice-conv-rev}, for each $1\le s\le m$ the points
$(\alpha_f^i(z))_s$ and $(y_i)_s$ lie in the same side of every $B\in\mathcal F_k$, hence in particular
$
(\alpha_f^i(z))_s\in B_s\iff (y_i)_s\in B_s.$
Therefore $\alpha_f^i(z)\in Z[p_j,\emptyset]\iff y_i\in Z[p_j,\emptyset]$ for all $j\le k$.
By the definition of the ultrametric $d$ on $W_0$, this implies
\[
d\bigl(\alpha_f^i(z),y_i\bigr)<2^{-k}\qquad(i\ge0),
\]
so $(y_i)_{i\geq 0}$ is $\mathcal V_k$--shadowed by the $\alpha_f$--orbit of $z$.
This proves shadowing for $(E,\alpha_f)$.
\end{proof}

The requirement in Theorem~\ref{thm:transfer-shadowing} that the compactification $D^\infty$ admit a compatible ultrametric is not restrictive, as we show below.

\begin{proposition}\label{prop:one-point-compactification-ultrametric}
Let $X$ be a noncompact locally compact Hausdorff space with a countable basis
$\mathcal B$ of compact open sets closed under finite intersections, and let $X^\infty$
be the one-point compactification of $X$.

Then $X^\infty$ is a compact zero-dimensional metrizable space and, moreover,
$X^\infty$ admits a compatible complete ultrametric.

\end{proposition}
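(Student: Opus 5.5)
The plan is to first establish that $X^\infty$ is compact, Hausdorff, second countable and zero-dimensional, and then to produce an explicit ultrametric. The ultrametric will come from an embedding into a Cantor-type product, in the spirit of Proposition~\ref{propalpha} and Corollary~\ref{w0 eh metrico}. Compactness and the Hausdorff property are standard for one-point compactifications of locally compact Hausdorff spaces.

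For a countable basis of $X^\infty$, take the sets $B\in\mathcal B$ together with the sets $X^\infty\setminus K$, where $K$ ranges over finite unions of elements of $\mathcal B$. There are countably many such $K$. Each compact $C\subseteq X$ is covered by finitely many elements of $\mathcal B$, so every neighbourhood $X^\infty\setminus C$ of $\infty$ contains some $X^\infty\setminus K$. All these sets are clopen in $X^\infty$:
\begin{itemize}
\item each $B\in\mathcal B$ is open and compact, hence closed in the Hausdorff space $X^\infty$;
\item each $K$ is compact open in $X$, so $X^\infty\setminus K$ is open and its complement $K$ is open.
\end{itemize}
Thus $X^\infty$ has a countable clopen basis and is zero-dimensional. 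By Urysohn's metrization theorem it is metrizable.

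To get an ultrametric, enumerate this countable clopen basis as $\{C_1,C_2,\dots\}$ and define $\beta:X^\infty\to\{0,1\}^{\mathbb N}$ by $\beta(x)_i=1$ if and only if $x\in C_i$. Each coordinate is continuous because $C_i$ is clopen. The map $\beta$ is injective because the clopen basis separates the points of the Hausdorff space $X^\infty$. Since $X^\infty$ is compact and the target is Hausdorff, $\beta$ is a homeomorphism onto its image, exactly as in Proposition~\ref{propalpha}.

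Now pull back the metric $d_{\mathrm{fin}}$ of \eqref{eq:d_fin} via Remark~\ref{rem metrica e topologia}. The argument of Proposition~\ref{W_0 is ultrametric} applies verbatim: the distance is $2^{-i}$ where $i$ is the first index at which exactly one of the two points lies in $C_i$. This shows the pulled-back metric is an ultrametric. Completeness holds because every compact metric space is complete, for any compatible metric.

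I do not expect a serious obstacle. The only point needing care is that $\infty$ is separated from points of $X$ by basic clopen sets. This works because any $x\in X$ lies in some $B\in\mathcal B$, and then $B$ and $X^\infty\setminus B$ are disjoint clopen sets separating $x$ from $\infty$. Note that noncompactness of $X$ is not actually used beyond making $\infty$ an isolated-or-not extra point; the argument is unchanged either way.
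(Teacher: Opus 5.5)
Your proof is correct. It agrees with the paper on zero-dimensionality but takes a different route to the ultrametric.

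The paper, like you, shows that points of $X$ have clopen neighbourhoods $B\in\mathcal B$ and that $\infty$ has clopen neighbourhoods $X^\infty\setminus C$ with $C$ a finite union of basic sets. It then simply asserts second countability and concludes that $X^\infty$ is a compact metrizable zero-dimensional, hence Polish, space. It obtains the ultrametric by citing \cite[Proposition~3.1.3]{DGS} (existence of a complete defining sequence) and \cite[Proposition~3.1.6]{DGS} (the metric attached to a complete defining sequence is a compatible complete ultrametric).

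You instead make everything explicit. Your observation that there are only countably many finite unions of members of $\mathcal B$ actually proves the second countability that the paper takes for granted. You then produce the ultrametric by embedding $X^\infty$ into $\{0,1\}^{\mathbb N}$ through indicator functions of the countable clopen basis and pulling back $d_{\mathrm{fin}}$. This is the same mechanism as Proposition~\ref{propalpha} and Proposition~\ref{W_0 is ultrametric}. Completeness then comes for free from compactness, so the detour through Polishness and complete defining sequences is unnecessary for this statement.

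Your route is self-contained and gives a concrete formula for the metric. The paper's route is shorter on the page and places the metric inside the defining-sequence framework of Section~4, though nothing in the statement requires that extra structure. Your closing remark is also right: neither argument uses that $X$ is noncompact.
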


\begin{proof}
Since $X^\infty$ is compact
Hausdorff and second countable, it is metrizable.
It remains to show that $X^\infty$ is zero-dimensional. Let us prove that it has a basis of
clopen sets.

First, if $x\in X$, then by hypothesis there exists $B\in\mathcal B$ with $x\in B$, and every
such $B$ is clopen in $X$. Since $B$ is compact in $X$, it is also closed in $X^\infty$; hence
$B$ is clopen in $X^\infty$. Therefore points of $X$ have a clopen neighborhood basis in $X^\infty$.

Now consider the point $\infty\in X^\infty$. A neighborhood basis of $\infty$ is given by sets of the form
$
U_K:=X^\infty\setminus K,$
where $K\subseteq X$ is compact. Fix such a compact $K$. Since $\mathcal B$ is a basis of $X$,
for each $x\in K$ choose $B_x\in\mathcal B$ such that $x\in B_x$. By compactness of $K$, there exist
$x_1,\dots,x_m\in K$ such that
\[
K\subseteq B_{x_1}\cup\cdots\cup B_{x_m}.
\]
Set $
C:=B_{x_1}\cup\cdots\cup B_{x_m}.$ 
Then $C$ is compact and open in $X$, hence clopen in $X^\infty$. Moreover,
$
\infty\in X^\infty\setminus C\subseteq X^\infty\setminus K=U_K.$
Thus $\infty$ also has a clopen neighborhood basis.

Therefore $X^\infty$ is zero-dimensional. Since it is also compact and metrizable, it is Polish.
By \cite[Proposition~3.1.3]{DGS}, $X^\infty$ admits a complete defining sequence, and then by
\cite[Proposition~3.1.6]{DGS} the associated metric is a compatible complete ultrametric.
\end{proof}

\subsection{Examples}

In this section we present examples of spaces and Deaconu--Renault systems satisfying the hypothesis of Section~2, Section~5, and of Theorem~\ref{thm:transfer-shadowing}.

Assume that $(X,d)$ is a compact ultrametric space and let $D\subseteq X$ be a nonempty open subset. We first observe that such subsets satisfy the standing hypotheses of Section~\ref{W0 section}.

The following proposition establishes the uniform equivalence of compatible ultrametrics on certain open subsets of a compact space, a fact that will be used repeatedly in the examples below.

\begin{proposition}\label{lem:open-subset-ultra}
Let $(X,d)$ be a compact ultrametric space, and let $D \subseteq X$ be a nonempty
open infinite subset. Then $D$ is an infinite locally compact Hausdorff space
which admits a countable basis $\mathcal B$ of compact open sets closed under
finite intersections.
\end{proposition}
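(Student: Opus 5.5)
The plan is to take as basis the family of open balls of $D$ with rational radii centered at points of a countable dense subset of $D$, and to check the required properties one at a time.

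\emph{Topological properties.} $D$ is Hausdorff as a subspace of a metric space. It is infinite by hypothesis. For local compactness, fix $x\in D$. Since $D$ is open, there is $r>0$ with $B_X(x,r)\subseteq D$. In an ultrametric space every open ball is also closed: if $y\notin B(x,r)$, then $B(y,r)$ is disjoint from $B(x,r)$ by Lemma~\ref{ultramentric prop}. Hence $B_X(x,r)$ is a closed subset of the compact space $X$, so it is compact. Thus every point of $D$ has a compact open neighborhood contained in $D$.

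\emph{The countable basis.} The compact metric space $X$ is separable, and the open subspace $D$ is separable as well. Fix a countable dense set $\{c_j\}\subseteq D$ and let
\[
\mathcal B_0:=\{B_X(c_j,q): j\in\N,\ q\in\mathbb Q_{>0},\ B_X(c_j,q)\subseteq D\}.
\]
Each member of $\mathcal B_0$ is compact and open by the previous paragraph.

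To see that $\mathcal B_0$ is a basis of $D$, let $U\subseteq D$ be open and $x\in U$. Choose a rational $q$ with $B_X(x,q)\subseteq U$, and choose $c_j$ with $d(x,c_j)<q$. By Lemma~\ref{ultramentric prop}(2), $B_X(c_j,q)=B_X(x,q)\subseteq U$. Hence $x\in B_X(c_j,q)\subseteq U$ with $B_X(c_j,q)\in\mathcal B_0$.

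\emph{Closure under intersections.} The ultrametric structure gives this essentially for free. If two balls $B(a,q)$ and $B(b,q')$ with $q\le q'$ intersect, take $z$ in the intersection. Applying Lemma~\ref{ultramentric prop}(2) to both balls gives $B(a,q)=B(z,q)$ and $B(b,q')=B(z,q')$, so $B(a,q)=B(z,q)\subseteq B(z,q')=B(b,q')$. Therefore the intersection of two members of $\mathcal B_0$ is either empty or one of the two balls.

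The empty set causes no trouble: it is compact and open, and we simply set $\mathcal B:=\mathcal B_0\cup\{\emptyset\}$. If empty intersections are to be excluded, one can instead close $\mathcal B_0$ under finite intersections, which keeps the family countable and consisting of compact open sets. Either way we obtain the desired countable basis $\mathcal B$.

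\emph{Main obstacle.} The only nontrivial points are compactness of the basic balls, which relies on the clopenness of ultrametric balls inside a compact space, and the nested-or-disjoint property. Both follow from Lemma~\ref{ultramentric prop}.
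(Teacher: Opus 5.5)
Your proof is correct and is essentially the paper's argument: rational-radius ultrametric balls about a countable dense set are clopen, hence compact in $X$; the ones contained in $D$ form a countable basis; and the nested-or-disjoint property of ultrametric balls gives closure under nonempty intersections. The differences are cosmetic: you use open balls centred in $D$ where the paper uses closed balls centred in $X$, and you actually verify the basis property the paper calls clear. One small slip is your aside that closing $\mathcal B_0$ under finite intersections would exclude empty intersections; it does not, since that closure contains $\emptyset$ whenever two disjoint balls occur, so $\mathcal B:=\mathcal B_0\cup\{\emptyset\}$ is the right choice.
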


\begin{proof}
Since $X$ is compact Hausdorff and $D$ is open in $X$, it follows that $D$ is
locally compact Hausdorff. As $X$ is metrizable, it is second countable, and
hence so is $D$.

We first construct a suitable basis on $X$. Fix a countable dense subset
$\{x_j\}_{j\in\N} \subseteq X$ and consider positive rational radii
$\{r_k\}_{k\in\N}$. For each $j,k$ let
\[
B_{j,k} := \overline B(x_j,r_k)
  = \{ y \in X : d(x_j,y) \le r_k\}.
\]
In an ultrametric space every closed ball is also open, so each $B_{j,k}$ is
clopen. Since $X$ is compact, each $B_{j,k}$ is also compact. The family
\[
\mathcal C := \{ B_{j,k} : j,k \in \N \}
\]
is countable and forms a basis for the topology of $X$. Moreover, in an
ultrametric space the intersection of two balls is either empty or itself a
ball, so $\mathcal C$ is closed under finite intersections (up to possibly
discarding the empty set).

Now define
\[
\mathcal B := \{ B \in \mathcal C : B \subseteq D\}.
\]
Each $B \in \mathcal B$ is open in $X$, hence open in $D$, and compact in $X$,
hence compact in $D$. Thus $\mathcal B$ is a family of compact open subsets of
$D$. It is clearly countable and closed under finite intersections (again,
ignoring the empty set).

Clearly, $\mathcal B$ is a basis for the topology of $D$.
\end{proof}

\begin{proposition}\label{prop:uniform-equivalence}
Let $(X,d)$ be a compact ultrametric space and let $p\in X$.  
Set $D := X\setminus\{p\}$, which is an open subset of $X$.  
Let $\widetilde{D}$ denote the one-point compactification of $D$, and let
$\widetilde d$ be any ultrametric on $\widetilde{D}$ that induces the 
one-point compactification topology.  

Then $\widetilde{D}$ is homeomorphic to $X$, and under this identification,
the ultrametrics $d$ and $\widetilde d$ (transported to $X$) are uniformly equivalent 
on $X$.
\end{proposition}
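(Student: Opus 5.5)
Two things need to be shown: a homeomorphism $h\colon \widetilde D\to X$, and the fact that the transported metric $\widetilde d\circ(h^{-1}\times h^{-1})$ and $d$ are uniformly equivalent on $X$. We first check the hypotheses. Since $X$ is compact Hausdorff and $D=X\setminus\{p\}$ is open, $D$ is locally compact Hausdorff. If $p$ is isolated in $X$, then $D$ is compact. In that case $\widetilde D=D\sqcup\{\infty\}$ with $\infty$ isolated, and sending $\infty\mapsto p$ still gives a homeomorphism onto $X$, so both cases can be handled together.

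For the homeomorphism, define $h\colon\widetilde D\to X$ by $h|_D=\mathrm{id}_D$ and $h(\infty)=p$. This is a bijection, so by compactness of $\widetilde D$ and the Hausdorff property of $X$ it is enough to prove continuity. Continuity at points of $D$ is clear because $D$ is open in both spaces and carries the same topology. At $\infty$, take an open $V\ni p$ in $X$. Then $X\setminus V$ is closed in the compact space $X$, so it is compact, and it lies inside $D$. Hence $h^{-1}(V)=\widetilde D\setminus (X\setminus V)$ is a basic neighbourhood of $\infty$ in the one-point compactification. (One could instead invoke the uniqueness of the one-point compactification.)

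For uniform equivalence, set $d'(x,y):=\widetilde d(h^{-1}x,h^{-1}y)$. By Remark~\ref{rem metrica e topologia}, $d'$ is a compatible metric on $X$, and it is an ultrametric since $\widetilde d$ is. The argument then rests on the following general fact: two compatible metrics on a compact space are uniformly equivalent. Indeed, the identity maps $(X,d)\to(X,d')$ and $(X,d')\to(X,d)$ are continuous maps out of a compact metric space, so each is uniformly continuous (Heine--Cantor). Written out, this gives: for every $\varepsilon>0$ there is $\delta>0$ with $d(x,y)<\delta\Rightarrow d'(x,y)<\varepsilon$, and the same with $d$ and $d'$ interchanged.

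The only point needing care is continuity of $h$ at $\infty$, together with the degenerate case where $p$ is isolated. The rest is a direct use of compactness.
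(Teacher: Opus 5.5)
Your proposal is correct and follows essentially the same route as the paper: identify $\widetilde D$ with $X$ via the map that is the identity on $D$ and sends $\infty\mapsto p$, transport $\widetilde d$ to $X$, and conclude from the uniform continuity of the identity maps between two compatible metrics on a compact space. The differences are only cosmetic. The paper obtains the homeomorphism by citing the uniqueness of the one-point compactification (Munkres, Theorem~29.1), whereas you verify the continuity of $h$ at $\infty$ directly and also treat the degenerate case where $p$ is isolated.
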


\begin{proof}
By \cite[Theorem~29.1]{munkres2edition}, the one-point compactification of 
$D=X\setminus\{p\}$ is homeomorphic to $X$.  
More precisely, there exists a homeomorphism $
h : \widetilde{D} \longrightarrow X$
such that $h|_{D} = \mathrm{id}_{D}$ and $h(\infty)=p$.

Define a new metric $\widetilde{d_X}$ on $X$ by transporting $\widetilde d$ through $h$:
\[
\widetilde d_X(x,y) := \widetilde d\big(h^{-1}(x), h^{-1}(y)\big), \qquad x,y\in X.
\]
Since $h$ is a homeomorphism, $\widetilde d_X$ is a metric that induces 
the same topology on $X$ as $d$.

Because $(X,d)$ and $(X,\widetilde d_X)$ are compact metric spaces with the 
same topology, their metrics are uniformly equivalent:  
for every $\varepsilon>0$ there exists $\delta>0$ such that
\[
d(x,y)<\delta \ \Longrightarrow\ \widetilde d_X(x,y)<\varepsilon,
\quad\text{and conversely}.
\]

Finally, since $h$ fixes $D$ pointwise, the restriction of $\widetilde d_X$ to $D$ 
coincides with the restriction of $\widetilde d$ to $D$.  
Thus, under the identification of $\widetilde{D}$ with $X$ via $h$,
the metrics $d$ and $\widetilde d$ are uniformly equivalent on $X$, and hence 
also on $D$.
\end{proof}

\begin{example}
    The proposition above applies, for instance, to any OTW subshift, by taking $p$ to be the empty word.
\end{example}

\begin{remark}\label{rmk:metric-compactification}
Since $D$ is a metric, locally compact, $\sigma$-compact space, its one-point
compactification $D^\infty$ is metrizable. Fix a metric $d^\infty$ on $D^\infty$
compatible with its topology. Then the restriction of $d^\infty$ to $D$ generates
the original topology of $D$ induced by $d$. 

\end{remark}
We conclude the paper with two examples of Deaconu--Renault systems satisfying the hypotheses of Theorem~\ref{thm:transfer-shadowing}.

\paragraph{Example (variable-length shift on $X\setminus\{1^\infty\}$).}
Let $X:=\{0,1\}^{\mathbb N}$ with the product topology and the standard ultrametric
\[
d(x,y)=
\begin{cases}
0,&x=y,\\
2^{-N},&N=\min\{n\ge 0:\ x_n\neq y_n\}.
\end{cases}
\]
Let $p:=1^\infty=(1,1,1,\dots)$ and set
$
D:=X\setminus\{p\}.$
Then $D$ is locally compact and zero-dimensional, hence it admits a countable basis of compact open sets
closed under finite intersections (e.g.\ cylinder sets contained in $D$).

For each $n\ge 0$, define the cylinder
\[
W_n:=Z(1^n0)=\{x\in X:\ x_0=\cdots=x_{n-1}=1,\ x_n=0\}.
\]
Then each $W_n$ is compact open, contained in $D$, and
\[
D=\bigsqcup_{n\ge 0} W_n
\quad\text{(disjoint union)}.
\]

Define $f\colon D\to X$ by
\[
f(x)=\sigma^{n+1}(x)\qquad\text{if }x\in W_n,
\]

where $\sigma$ is the left shift on $X$.

Since each restriction $f|_{W_n}=\sigma^{n+1}|_{W_n}$ is a homeomorphism onto $X$, the map $f$ is a local
homeomorphism. Thus $(X,f)$ is a Deaconu--Renault system with $\Dom(f)=D$.

\bigskip
{\it Compactification and metric.}

The one-point compactification of $D=X\setminus\{p\}$ is canonically homeomorphic to $X$ by adding back the point $p$. 
Hence we identify $
D^\infty:=X$, $
d_\infty:=d,$ 
so $d_\infty$ is a compatible ultrametric on $D^\infty$ and its restriction to $D$ is the metric on $D$.
Moreover, $f(D)=X=D^\infty$ (indeed, for any $y\in X$ we have $0y\in W_0\subseteq D$ and $f(0y)=y$), so $
D\subseteq f(D)\subseteq D^\infty$ 
as required in Lemma~\ref{lem:UM-sep-inv}(i).

\bigskip
{\it Verification of A1--A3 (Definition~\ref{def:sep-property}).}

The family $\{W_n\}_{n\ge 0}\subseteq\mathcal B$ is a countable cover of $D$.
For each $n\ge 0$:
\begin{itemize}
\item[\textnormal{(A1)}] $f|_{W_n}\colon W_n\to f(W_n)=X$ is a homeomorphism and $f(W_n)\subseteq D^\infty$.
Its inverse branch $g_n\colon X\to W_n$ is the prefixing map
\[
g_n(y)=1^n0\,y\in W_n.
\]

\item[\textnormal{(A2)}] $g_n$ is $\theta$--Lipschitz with the uniform constant $\theta=\frac12$:
for all $a,b\in X$,
\[
d_\infty\bigl(g_n(a),g_n(b)\bigr)=2^{-(n+1)}\,d_\infty(a,b)\le \tfrac12\,d_\infty(a,b).
\]

\item[\textnormal{(A3)}] Since $f(W_n)=X=D^\infty$ for every $n$, the uniform interior radius holds for any $R>0$:
\[
\forall y\in f(W_n)=X,\qquad B_{D^\infty}(y,R)\subseteq X=f(W_n).
\]
For instance, take $R=1$.
\end{itemize}

\smallskip
\noindent
Therefore $(X,f)$ satisfies the separation property on $D^\infty$ with respect to $d_\infty$,
and Lemma~\ref{lem:UM-sep-inv} applies (with the ball partitions in $(D,d_\infty)$).

Let $(E,\alpha_f)$ be the Deaconu--Renault system associated to $(X,f)$, as in Section~\ref{tornado}. We claim that $\vec{0}\in E$, so that Theorem~\ref{thm:transfer-shadowing} applies in this example. Let $\widetilde{X}$ be as in Section~\ref{tornado}. By Proposition~\ref{prop:shift-DR-correct}, it suffices to show that $\widetilde{X}$ contains an element of length $1$.

Set $w=10^\infty\in D$. For each $n\in\mathbb N$ and $m\in\mathbb N$, define $x_n^m:=(1^n0)^m w.$
Then $f(x_n^{m+1})=x_n^m$, $m\ge 0.$
Hence $
x_n:=x_n^0x_n^1x_n^2\cdots \in \widetilde{X}$
is an element of infinite length. Moreover, for every $n\in\mathbb N$ we have $x_n^0=w$, so the sequence $(x_n^0)_{n\in\mathbb N}$ is constant and therefore converges to $w$ in $D$. On the other hand, $(x_n^1)_{n\in\mathbb N}$ converges to $\infty$ in $D^\infty$ (equivalently, to $1^\infty$ in $X$). It then follows from Corollary~\ref{convergencia em W0} that $(x_n)_{n\in\mathbb N}$ converges to $w$ in $D_0$. Therefore $w\in \widetilde{X}$.
Since $\ell(w)=1$, Proposition~\ref{prop:shift-DR-correct} yields $\vec{0}\in E$. Consequently, Theorem~\ref{thm:transfer-shadowing} applies to this example.

\paragraph{Example: First return map on a Cantor set}
Let $X:=\{0,1\}^{\mathbb N}$ be the Cantor space with the standard ultrametric: $d(x,y)=2^{-N} \text{ if } x\neq y,\text{ where } N=\min\{n\ge 0: x_n\neq y_n\}.$

Let $\sigma\colon X\to X$ be the left shift and fix the clopen set
\[
A:=Z(0)=\{x\in X:\ x_0=0\}.
\]
Define the (first) return time $\tau\colon A\to\mathbb N$ by
\[
\tau(x):=\min\{n\ge 1:\ \sigma^n(x)\in A\}.
\]
(If $x\in A$ has the form $x=00u$ with $u\in \{0,1\}^\N$ then $\tau(x)=1$ and if $x=0\,1^k\,0\,u$ with $k\ge 1,\ u\in\{0,1\}^{\mathbb N}$, then $\tau(x)=k+1$).

Let
\[
D:=A\setminus\{0\,1^\infty\},
\]
so $\tau$ is finite on $D$ and $D$ is open in $A$ (hence locally compact) but $D\neq X$.

Define the return map $f\colon D\to A$ by
\[
f(x):=\sigma^{\tau(x)}(x).
\]
Then $f$ is a local homeomorphism (hence $(X,f)$ is a Deaconu--Renault system with $\Dom(f)=D$):
indeed, $D$ decomposes as the disjoint union of compact-open sets
\[
W_k:=Z(0\,1^k\,0)\qquad (k\ge 0),
\]
and on each $W_k$ we have $\tau\equiv k+1$, so $
f|_{W_k}=\sigma^{k+1}|_{W_k}\colon W_k\to A$
is a homeomorphism onto $A$.

\bigskip
{\it Compactification and metric.}

Since $D=A\setminus\{0\,1^\infty\}$, its one-point compactification $D^\infty$ is naturally identified with $A$
(by adding back the missing point $0\,1^\infty$). Under this identification we take $
D^\infty:=A$, 
$
d_\infty:=d|_A,$
which is a compatible ultrametric on $D^\infty$; its restriction to $D$ is the metric on $D$ used below.
Moreover, $f(D)=A=D^\infty$, so
$
D\subseteq f(D)\subseteq D^\infty
$
as required in Lemma~\ref{lem:UM-sep-inv}(i).

\bigskip
{\it Verification of A1--A3 (Definition~\ref{def:sep-property}).}

The family $\{W_k\}_{k\ge 0}\subseteq\mathcal B$ is a countable cover of $D$.
For each $k\ge 0$:
\begin{itemize}
\item[\textnormal{(A1)}] $f|_{W_k}\colon W_k\to f(W_k)=A$ is a homeomorphism.
Its inverse branch $g_k\colon A\to W_k$ is the prefixing map
\[
g_k(y)=0\,1^k\,y.
\]
In particular $f(W_k)=A\subseteq D^\infty$.

\item[\textnormal{(A2)}] $g_k$ is $\theta$--Lipschitz on $f(W_k)=A$ with the uniform constant $\theta=\frac12$:
for all $a,b\in A$,
\[
d_\infty\bigl(g_k(a),g_k(b)\bigr)=2^{-(k+1)}\,d_\infty(a,b)\le \tfrac12\,d_\infty(a,b).
\]

\item[\textnormal{(A3)}] Since $f(W_k)=A=D^\infty$ for every $k$, the uniform interior radius holds for any $R>0$:
\[
\forall y\in f(W_k)=A,\qquad B_{D^\infty}(y,R)\subseteq A=f(W_k).
\]
For instance, take $R=1$.
\end{itemize}

\smallskip
\noindent
Therefore $(X,f)$ satisfies the separation property on $D^\infty$ with respect to $d_\infty$,
and Lemma~\ref{lem:UM-sep-inv} applies (with the ball partitions in $(D,d_\infty)$).

As in the previous example, we show that Theorem~\ref{thm:transfer-shadowing} applies in this case as well. Let $(E,\alpha_f)$ be the Deaconu--Renault system associated to $(X,f)$, as in Section~\ref{tornado}. By Proposition~\ref{prop:shift-DR-correct}, it suffices to show that $\widetilde{X}$ contains an element of length $1$.

Let $w=(01)^\infty \in D.$
For each $n,m\in\mathbb N$, define
$x_n^m:=(01^n)^m w.$
Then $
f(x_n^{m+1})=x_n^m$, $m\ge 0$,
and hence $x_n:=x_n^0x_n^1x_n^2\cdots \in \widetilde{X}$
is an element of infinite length. Moreover, the sequence $(x_n^0)_{n\in\mathbb N}$ is constant equal to $w$, and therefore converges to $w$ in $D$. On the other hand,
\[x_n^1=(01^n)w \longrightarrow 01^\infty
\quad\text{in } X,\]
which means that $(x_n^1)_{n\in\mathbb N}$ converges to $\infty$ in $D^\infty$. It follows from Corollary~\ref{convergencia em W0} that $(x_n)_{n\in\mathbb N}$ converges to $w$ in $D_0$. Therefore $w\in \widetilde{X}$.

Since $\ell(w)=1$, Proposition~\ref{prop:shift-DR-correct} yields $\vec{0}\in E$. Consequently, Theorem~\ref{thm:transfer-shadowing} applies in this example as well.

\bibliographystyle{abbrv}
\bibliography{ref}

@article{DGS,
title = {Shadowing, finite order shifts and ultrametric spaces},
journal = {Advances in Mathematics},
volume = {385},
pages = {107760},
year = {2021},
issn = {0001-8708},
doi = {https://doi.org/10.1016/j.aim.2021.107760},
url = {https://www.sciencedirect.com/science/article/pii/S0001870821001997},
author = {Udayan B. Darji and Daniel Gon\c{c}alves and Marcelo Sobottka},
}

@article{ArmstrongBrixCarlsenEilers2023,
  author  = {Becky Armstrong and Kevin Aguyar Brix and Toke Meier Carlsen and S{\o}ren Eilers},
  title   = {Conjugacy of local homeomorphisms via groupoids and {C}*-algebras},
  journal = {Ergodic Theory and Dynamical Systems},
  volume  = {43},
  number  = {8},
  pages   = {2516--2537},
  year    = {2023},
  doi     = {10.1017/etds.2022.50}
}

@article{AraClaramunt2024,
  author  = {Pere Ara and Joan Claramunt},
  title   = {A correspondence between surjective local homeomorphisms and a family of separated graphs},
  journal = {Discrete and Continuous Dynamical Systems},
  volume  = {44},
  number  = {5},
  pages   = {1178--1266},
  year    = {2024},
  doi     = {10.3934/dcds.2023143}
}

@article{GoodMeddaugh2020,
  author    = {Chris Good and Jonathan Meddaugh},
  title     = {Shifts of finite type as fundamental objects in the theory of shadowing},
  journal   = {Inventiones mathematicae},
  volume    = {220},
  number    = {3},
  pages     = {715--736},
  year      = {2020},
  doi       = {10.1007/s00222-019-00936-8},
  url       = {https://doi.org/10.1007/s00222-019-00936-8}
}

@article{GU,
title = {Shadowing for Local Homeomorphisms, with Applications to Edge Shift Spaces of Infinite Graphs},
journal = {Journal of Dynamics and Differential Equations},
year = {2024},
issn = {1572-9222},
doi = {https://doi.org/10.1016/10.1007/s10884-023-10342-7},
url = {https://doi.org/10.1007/s10884-023-10342-7},
author = {Gon\c{c}alves, D. and Uggioni, B.B.},
}

@article{webster2014,
  author  = {Samuel B. G. Webster},
  title   = {The path space of a directed graph},
  journal = {Proceedings of the American Mathematical Society},
  volume  = {142},
  number  = {1},
  pages   = {213--225},
  year    = {2014},
  doi     = {10.1090/S0002-9939-2013-11755-7}
}

@article{webster2011,
  author  = {Samuel B. G. Webster},
  title   = {The path space of a higher-rank graph},
  journal = {Studia Mathematica},
  volume  = {204},
  number  = {2},
  pages   = {155--185},
  year    = {2011},
  doi     = {10.4064/sm204-2-4}
}

@article{deaconu1995,
  author  = {Valentin Deaconu},
  title   = {Groupoids associated with endomorphisms},
  journal = {Transactions of the American Mathematical Society},
  volume  = {347},
  number  = {5},
  pages   = {1779--1786},
  year    = {1995},
  doi     = {10.1090/S0002-9947-1995-1233967-5}
}

@article{chenli1993,
  author  = {Liang Chen and Shi Hai Li},
  title   = {Shadowing property for inverse limit spaces},
  journal = {Proceedings of the American Mathematical Society},
  volume  = {115},
  number  = {2},
  pages   = {573--580},
  year    = {1992},
  doi     = {10.1090/S0002-9939-1992-1097338-X}
}

@article{GoncalvesRoyerTasca2024,
  author  = {D. Gon{\c{c}}alves and D. Royer and F. Tasca},
  title   = {Entropy of local homeomorphisms with applications to infinite alphabet shift spaces},
  journal = {International Mathematics Research Notices},
  year    = {2024},
  volume  = {2024},
  number  = {6},
  pages   = {4842--4892}
}

@article {pat,
    AUTHOR = {Paterson, Alan L. T. and Welch, Amy E.},
     TITLE = {Tychonoff's theorem for locally compact spaces and an
              elementary approach to the topology of path spaces},
   JOURNAL = {Proceedings of the American Mathematical Society},
  FJOURNAL = {Proceedings of the American Mathematical Society},
    VOLUME = {133},
      YEAR = {2005},
    NUMBER = {9},
     PAGES = {2761--2770},
      ISSN = {0002-9939,1088-6826},
   MRCLASS = {54B10 (05C10 22A22 54B15)},
  MRNUMBER = {2146225},
MRREVIEWER = {Douglass\ L.\ Grant},
       DOI = {10.1090/S0002-9939-05-08030-5},
       URL = {https://doi.org/10.1090/S0002-9939-05-08030-5},
}

@book{OTW14,
 author = {Ott, William and Tomforde, Mark and Willis, Paulette N.},
 title = {One-sided shift spaces over infinite alphabets},
 fseries = {NYJM Monographs},
 series = {NYJM Monogr.},
 volume = {5},
 year = {2014},
 publisher = {Albany, NY: State University of New York, University at Albany},
 language = {English},
 zbMATH = {6274317},
 Zbl = {1284.37001}
}

@book{munkres2edition,
 author = {Munkres, James R.},
 title = {Topology.},
 edition = {2nd ed.},
 isbn = {0-13-181629-2},
 year = {2000},
 publisher = {Upper Saddle River, NJ: Prentice Hall},
 language = {English},
 zbMATH = {1461253},
 Zbl = {0951.54001}
}

\end{document}